\newtheorem{theorem}{Theorem}
\newtheorem{lemma}[theorem]{Lemma}
\newtheorem{proposition}[theorem]{Proposition}
\newtheorem{corollary}[theorem]{Corollary}
\newtheorem{definition}[theorem]{Definition}
\theoremstyle{definition}
\theoremstyle{remark}
\newtheorem{remark}[theorem]{Remark}
\newcommand{\Bigxi}{\Xi}
\newcommand{\A}{{\scriptscriptstyle {A}}}
\newcommand{\T}{{\scriptscriptstyle \mathrm{T}}}
\newcommand{\R}{\ensuremath{\mathbb{R}}}
\newcommand{\dif}{\ensuremath{\mathrm{d}}}
\newcommand{\INT}{\textup{int}}
\newcommand{\chain}{\ensuremath{\beta}}
\newcommand{\dd}{\,\mathrm{d}}
\newcommand{\diver}{\mathrm{div}}
\newcommand{\veps}{\varepsilon}
\newcommand{\e}{\textup{e}}
\DeclareMathOperator*{\wliminf}{lim \, inf \lower0.8ex\hbox{\textup{\scriptsize*}}}
\DeclareMathOperator*{\wlimsup}{lim \, sup \raise0.4ex\hbox{\textup{\scriptsize*}}}
\DeclareMathOperator*{\esssup}{ess \, sup}
\DeclareMathOperator*{\essinf}{ess \, inf}
\DeclareMathOperator{\sgn}{\textup{sign}}
\DeclareMathOperator{\tr}{\textup{tr}}
\DeclareMathOperator*{\co}{\mathit{o}}
\def\restriction#1#2{\mathchoice
              {\setbox1\hbox{${\displaystyle #1}_{\scriptstyle #2}$}
              \restrictionaux{#1}{#2}}
              {\setbox1\hbox{${\textstyle #1}_{\scriptstyle #2}$}
              \restrictionaux{#1}{#2}}
              {\setbox1\hbox{${\scriptstyle #1}_{\scriptscriptstyle #2}$}
              \restrictionaux{#1}{#2}}
              {\setbox1\hbox{${\scriptscriptstyle #1}_{\scriptscriptstyle #2}$}
              \restrictionaux{#1}{#2}}}
\def\restrictionaux#1#2{{#1\,\smash{\vrule height .8\ht1 depth .85\dp1}}_{\,#2}} 
\begin{document}

\title[Stability and duality for entropy and viscosity solutions]{Optimal stability results and nonlinear duality for
$L^\infty$ entropy and $L^1$ viscosity solutions}

\author[N.~Alibaud]{Natha\"el Alibaud}
\address[N.~Alibaud]{SUPMICROTECH-ENSMM\\
26 Chemin de l'Epitaphe\\ 25030 Besan\c{c}on ce\-dex\\ France\\
and\\
Universit\'e de Franche-Comt\'e\\ CNRS\\ LmB\\ F-25000 Besan\c{c}on\\ France}
\email{nathael.alibaud\@@{}ens2m.fr}
\urladdr{https://lmb.univ-fcomte.fr/Alibaud-Nathael}

\author[J. Endal]{J\o rgen Endal}
\address[J. Endal]{Department of Mathematical Sciences\\
Norwegian University of Science and Technology (NTNU)\\
N-7491 Trondheim, Norway} 
\email[]{jorgen.endal\@@{}ntnu.no}
\urladdr{http://folk.ntnu.no/jorgeen}

\author[E.~R.~Jakobsen]{Espen R. Jakobsen}
\address[E.~R.~Jakobsen]{Department of Mathematics\\
Norwegian University of Science and Technology (NTNU)\\
N-7491 Trondheim, Norway} \email[]{espen.jakobsen\@@{}ntnu.no}
\urladdr{http://folk.ntnu.no/erj}

\subjclass[2010]{
35L65, 
35K65, 
35B30, 
35B35, 
35D30, 
35D40
}

\keywords{Nonlinear duality, optimal weighted $L^1$ contraction estimate, $L^\infty$ entropy solutions, $L^1$ viscosity solutions,  anisotropic degenerate parabolic PDE, Hamilton-Jacobi-Bellman PDE}

\begin{abstract}
We give a new and rigorous duality relation between two central notions of weak solutions of nonlinear PDEs: entropy and viscosity solutions. 
It takes the form of the {\it nonlinear dual inequality:}
\begin{equation}\label{star}
\tag{$\star$}
\int |S_t u_0-S_t v_0| \varphi_0 \dd x\leq \int |u_0-v_0| G_t \varphi_0 \dd x, \quad \forall \varphi_0 \geq 0, \forall u_0, \forall v_0, 
\end{equation}
\renewcommand*{\theHequation}{notag.\theequation}
where $S_t$ is the entropy solution semigroup of the anisotropic degenerate parabolic equation 
\begin{equation*}
\partial_t u+\diver F(u)=\diver (A(u) D u),
\end{equation*}
and where we look for the smallest semigroup $G_t$ satisfying \eqref{star}. This amounts to  finding  an optimal weighted $L^1$ contraction  estimate  for $S_t$. 
Our main result is that $G_t$ is the viscosity solution semigroup of the Hamilton-Jacobi-Bellman equation
\begin{equation*}
\partial_t \varphi=\sup\nolimits _\xi \{F'(\xi) \cdot D \varphi+\tr(A(\xi) D^2\varphi)\}.
\end{equation*}  
Since weigthed $L^1$ contraction  results  are mainly used for possibly nonintegrable $L^\infty$ solutions  $u$, 
the natural spaces behind this duality are $L^\infty$ for $S_t$ and $L^1$ for $G_t$.  We therefore  develop a  corresponding  $L^1$ theory for viscosity solutions  $\varphi$. But  $L^1$ itself is too  large for  well-posedness,  and we rigorously identify the weakest $L^1$ type Banach setting  where  we can  have  it -- a  subspace of $L^1$ called $L^\infty_\INT$. 
A consequence of our results is a new domain of dependence like estimate for second order anisotropic degenerate parabolic PDEs. It is  given in  terms of a stochastic target problem and extends  in a natural way recent  results for first order hyperbolic PDEs 
by [N. Pogodaev, {\it J. Differ. Equ.,} 2018].
\end{abstract}

\maketitle

\tableofcontents

\section{Introduction}

In this paper we study two central notions of weak solutions of nonlinear PDEs and their interplay -- entropy solutions and viscosity solutions. Originally introduced for first order scalar conservation laws \cite{Kru70} and
Hamilton-Jacobi equations \cite{CrLi83}  respectively, both solution concepts have later been  extended to
second order PDEs \cite{Jen88,Ish89,Car99,ChPe03}. Conservation laws are divergence form equations arising in continuum physics \cite{Daf10}, while Hamilton-Jacobi equations are nondivergence form  equations from e.g. differential geometry and optimal control theory \cite{FlSo93,Bar94,BaCa-Do97}. The  well-posedness  of these equations  is an important  topic and  requires the entropy and viscosity solution theories in general.
The literature  is by now very large and  includes lots of applications. See \cite{FlSo93,Dib93,Bar94,BaCa-Do97,Vaz07,Daf10,CrIsLi92}  for the state-of-the-art.  

\medskip

 Here we develop a new connection between these solution concepts. It is already well-known that viscosity solutions are integrated entropy solutions in space dimension one \cite{Cas92,KaRi02,CoPe20}. Our connection is valid in any dimension and is expressed through weighted $L^1$ contraction results for entropy solutions:  The  optimal weight is the viscosity solution of a well-determined {\it dual equation}. Since $L^\infty$ is a natural space for such weighted estimates, we  need and do  develop an $L^1$ theory for viscosity solutions  of the dual equation.  Consequences are  a new domain of dependence like result for second order PDEs in terms of a stochastic target problem,  a new rigorous form of duality between $L^\infty$ entropy and $L^1$ viscosity solutions in terms of nonlinear semigroups, and a new characterization of viscosity supersolutions;  see \eqref{weighted-stochastic}, \eqref{intro-sg} and \eqref{nldi} respectively. 

The idea of using viscosity solutions to get estimates for entropy solutions was from \cite{EnJa14}.  The corresponding results were rather accurate but not optimal yet.  In this paper we prove optimal estimates for entropy solutions -- and -- that viscosity solutions are in fact needed to prove this optimality. This is exactly what leads to rigorous duality results. Also note that we consider nonlinear anisotropic diffusions as opposed to \cite{EnJa14}. For an early discussion and open questions about ``duality between nonlinear semigroups,'' see \cite[pp. 28--29]{BeWi94}. We also mention the recent papers \cite{CaGo-Capr, Por24} which study transport equations with linear diffusion through viscosity solutions of their dual equations. 

\medskip

To be more precise,  we consider the  following two Cauchy problems: For 
the anisotropic degenerate parabolic convection-diffusion equation
\begin{equation}\label{E}
\begin{aligned}
\partial_t u + \diver
F(u) = \diver \left(A(u) D u\right) 
& \qquad x \in \R^d, t>0,
\\
u(x,0)=u_0(x)  & \qquad x \in \mathbb{R}^d,
\end{aligned}
\end{equation} 
and  for  the
Hamilton-Jacobi-Bellman (HJB) 
equation 
\begin{subequations}\label{eq:dual}
\begin{align}
\partial_t \varphi=\sup\nolimits_{\xi\in \mathcal{E}} \left\{b(\xi) \cdot D \varphi+\tr \left(a(\xi)D^2\varphi\right) \right\} 
& \qquad x \in \R^d, t>0,\label{eq:dual-1}
\\
\varphi(x,0)=\varphi_0(x)   & \qquad x \in \R^d,
\end{align}
\end{subequations}
where 
``$D$,'' ``$D^2$'' and ``$\diver$'' 
respectively denote the
gradient, the Hessian and the divergence in $x$, and 
``$\tr$'' 
is the
trace. We assume that
\begin{equation}\label{fassumption}
\tag{{\rm H1}}
F\in
W_\textup{loc}^{1,\infty}(\mathbb{R},\mathbb{R}^d)
\quad\text{and}\quad A=\sigma^\A  \left(\sigma^{\A}\right)^{\T} \quad\mbox{for}\quad \sigma^{\A} \in L_\textup{loc}^\infty(\mathbb{R},\R^{d \times K}),
\end{equation}
\renewcommand*{\theHequation}{notag2.\theequation}
as well as
\begin{equation}\label{bassumption}
\tag{{\rm H2}}
\!\!\!\!\!\!\!\!\!\!\!\!\!\!\!\!\!\!\!\!\!\!\!\!\!\!\!\!\!\!\!\!\!\!\begin{cases}
\mathcal{E} \text{ is a nonempty set,}\\
b:\mathcal{E} \to\R^d \text{ a bounded function},\\
a=\sigma^{a} \left(\sigma^{a}\right)^{\T} \mbox{ for some bounded } \sigma^{a}:\mathcal{E} \to \R^{d \times K},
\end{cases}
\end{equation}
\renewcommand*{\theHequation}{notag3.\theequation}
where $K$ is the maximal rank of $A(u)$ and $a(\xi)$. The entropy solution theory for first order PDEs \cite{Kru70} was extended in \cite{Car99,ChPe03} to show well-posedness of \eqref{E} in $L^1 \cap L^\infty$ or $L^1$. Well-posedness in $L^\infty$ is less standard for second order PDEs, but results  exist  in
\cite{ChDi01,AnMa10,EnJa14,Pan20}; see \cite{Fripr} for anisotropic diffusions. 
Our main  objective  is to derive an optimal weighted $L^1$ contraction result for $L^\infty$ entropy solutions of \eqref{E}.  This then will require the  developement of a corresponding $L^1$ theory for  a dual equation of the form  \eqref{eq:dual}, a nonstandard generalization of classical viscosity solution theory   \cite{CrLi83,Jen88,Ish89,CrIsLi92,FlSo93,Bar94,BaCa-Do97}.


Contraction type estimates are quantitative continuous dependence results on the initial data. A simple example is the $L^1$ contraction 
principle \cite{Kru70,Car99,ChPe03}:
\begin{equation}
\label{intro-contraction}
\|(u-v)(t)\|_{L^1} \leq \|u_0-v_0\|_{L^1}.
\end{equation}
For possibly nonintegrable $L^\infty$ solutions,  we need weighted estimates. An important result is the finite speed of propagation property for first order PDEs \cite{Kru70}:
\begin{equation}\label{finite-propagation}
\int_{|x-x_0| <R} |u(x,t) -v(x,t)|  \dd x \leq \int_{|x-x_0|<R+Ct} |u_0(x)-v_0(x)| \dd x;
\end{equation}
see \cite{Pog18} for more precise estimates.
For second order PDEs, a standard example is given in 
\cite{BeCrPi84,ChDi01,WuZhYiLi01,Fripr}:
\begin{equation}\label{standard}
\int |u(x,t) -v(x,t)| \e^{-\sqrt{1+|x|^2}}  \dd x \leq \e^{C t} \int |u_0(x)-v_0(x)|  \e^{-\sqrt{1+|x|^2}} \dd x.
\end{equation} 
Note that \eqref{standard} does not imply \eqref{intro-contraction} and \eqref{finite-propagation}. A finer result that is closer to \eqref{finite-propagation} is given in \cite{EnJa14} but it still
does not imply \eqref{intro-contraction}, see \cite[Rem. 2.7(b)]{EnJa14}.
\medskip

We continue with a formal presentation of our main results. We first give
a very accurate  weighted  $L^1$ contraction estimate for \eqref{E}. We need to be precise about the dependence of the estimates in $u_0$ and $v_0$. Note that $C$ in \eqref{finite-propagation} and \eqref{standard} actually depends on $L^\infty$ bounds on these initial data. These bounds will determine $\mathcal{E}$ in the dual equation of the form \eqref{eq:dual}. For $m<M$, our new estimate for \eqref{E} is
\begin{equation}\label{weighted-intro}
\int |u(x,t)-v(x,t)| \varphi_0(x) \dd x \leq \int |u_0(x)-v_0(x)| \varphi(x,t) \dd x,
\end{equation}
where $\varphi_0\geq 0$ is arbitrary, the weight  $\varphi$ is the viscosity solution of \eqref{eq:dual} with 
\begin{equation}\label{hyp-referee}
b = F', \quad a = A, \quad \mathcal{E}=[m,M] \cap \big\{\text{Lebesgue points of $(F',A)$}\big\},
\end{equation}
and $u_0$ and $v_0$ are arbitrary with values in $[m,M]$.
For a precise statement, see Theorem \ref{thm:weighted}. Note that we also use another equivalent formulation of \eqref{eq:dual} in terms of $\esssup$, see \eqref{eq:dual-true}. The standard HJB form is, however, used especially for results specific to viscosity solutions.

Equation \eqref{eq:dual} is also related to stochastic control theory \cite{FlSo93}. 
If we assume $F'$ and $A$ are continuous so  $\mathcal{E}$ becomes compact, then the solution $\varphi$ of \eqref{eq:dual} is the value function of a stochastic target problem and \eqref{weighted-intro} can be rewritten as 
\begin{equation}
\label{weighted-stochastic}
\int_{U} |u(x,t) -v(x,t)| \dd x \leq \int |u_0(x)-v_0(x)| \sup_{\boldsymbol{\xi}_\cdot\in \Bigxi} \mathbb{P} \left(\boldsymbol{X}_t^x \in U\right) \dif x,
\end{equation}
 where $U \subseteq \R^d$ is arbitrary,  $\mathbb{P}$ is the probability, $\Bigxi$ is the set of $[m,M]$-valued processes, $\boldsymbol{B}_s$ a Brownian motion, and $\boldsymbol{X}_s^x$ an Ito process satisfying the stochastic differential equation (SDE) 
\begin{equation}\label{SDE}
\begin{aligned}
\dd \boldsymbol{X}_s^x=F'(\boldsymbol{\xi}_s) \dd s+\sqrt{2} \, \sigma^\A(\boldsymbol{\xi}_s) \dd \boldsymbol{B}_s & \qquad s >0,\\
 \boldsymbol{X}^x_{0}=x.
\end{aligned}
\end{equation}
For the precise statement, see Corollary \ref{cor:ws}. The control $\boldsymbol{\xi}_s$ is determined to maximize the probability for the controlled process $\boldsymbol{X}_s^x$  starting from $x$ at time $0$ to reach $U$ at
time $t$,  and Equation 
\eqref{eq:dual-1} with data \eqref{hyp-referee} is the dynamic programming
equation for this control problem. Interestingly \eqref{weighted-stochastic} can be interpreted as a domain of dependence estimate for \eqref{E}. 
Indeed if we consider the deterministic case $A \equiv 0$, then formally \eqref{SDE} becomes 
the characteristic equation of \eqref{E}, $\frac{\dd \boldsymbol{X}_s^x}{\dd s}=F'(u(\boldsymbol{X}_s^x,s))$, if we take
$\boldsymbol{\xi}_s=u(\boldsymbol{X}_s^x,s)$. In fact Estimate \eqref{weighted-stochastic} reduces to the domain of dependence estimate of \cite{Pog18} for scalar conservation laws, see Corollary \ref{cor:wd}.
This suggests that \eqref{weighted-stochastic} is a natural extension of such estimates to the degenerate parabolic equation \eqref{E}, where the second order term in \eqref{E} is taken into account via the Brownian part (the Ito integral) in \eqref{SDE}.

Note that \eqref{weighted-intro} and \eqref{weighted-stochastic} imply \eqref{intro-contraction}, \eqref{finite-propagation}, \eqref{standard}, the related results  in \cite{EnJa14,Pog18}, and as we will see, they are optimal in a rigorous sense. To discuss the optimality of \eqref{weighted-intro}, we fix $\varphi_0$ and try to identify the minimal $\varphi$ satisfying \eqref{weighted-intro} for any $u_0,v_0$. The key  result (Theorem \ref{thm:dual})
is   a characterization of viscosity supersolutions of \eqref{eq:dual} in terms of contraction estimates for \eqref{E}: 
\begin{quote}
{\it A nonnegative function $\varphi$ is a viscosity supersolution of \eqref{eq:dual-1} with data \eqref{hyp-referee} if and only if
\begin{equation}
\label{intro-sg}
\int |u(x,t)-v(x,t)| \varphi(x,s) \dd x \leq \int |u_0(x)-v_0(x)| \varphi(x,t+s) \dd x,
\end{equation}
for all $t,s \geq 0$ and $u_0,v_0$ with values in $[m,M]$ with associated entropy solutions $u,v$ of \eqref{E}.}
\end{quote}
Roughly speaking  this result  implies that if  we restrict  to weights satisfying a natural semigroup property, then the best weight in \eqref{weighted-intro} is the viscosity solution of \eqref{eq:dual}  since  by comparison solutions are always smaller than supersolutions. This  then leads  to our most original  result (Corollary \ref{cor:sg}):
\begin{quote}
{\it If $S_t$ and $G_t$ are the solution semigroups of
 \eqref{E} and \eqref{eq:dual}, with data \eqref{hyp-referee}, then $G_t$ is the smallest semigroup satisfying
\begin{equation}
\label{nldi}
\int |S_tu_0-S_tv_0| \varphi_0 \dd x \leq \int |u_0-v_0| G_t \varphi_0 \dd x,
\end{equation}}
for all $u_0,v_0$ with values in $[m,M]$ and nonnegative $\varphi_0$.
\end{quote}
We can interpret \eqref{nldi} as a {\em nonlinear dual inequality} and 
$G_t$ as a {\em dual  semigroup} of
$S_t$, because 
$G_t$ is entirely determined
by \eqref{nldi} and knowledge of $S_t$. 
The duality in the other direction is open
(Remark~\ref{noq}). Since
$S_t$ is taken on $L^\infty$ from the begining, it  remains  to properly define $G_t$ on $L^1$. 

Classical viscosity solution theory starting from \cite{CrLi83,Jen88,Ish89} and summarized in e.g. \cite{CrIsLi92,FlSo93,Bar94,BaCa-Do97}, typically considers bounded continuous $C_b$ solutions. 
For solutions in $L^1$ or $L^p$  (in space)  there are fewer results, see  e.g. \cite{CaCrKoSw96} for nondegenerate PDEs and
\cite{LiTa01,AlHoLBMo06,AlIm09,BaCaLeMo09,BoFoZi10,CaFoMo11,EnJa14} for various other PDEs. Here we show that \eqref{eq:dual} can be ill-posed in $L^1$  in general.   We then 
consider stronger norm topologies and identify the weakest one for which \eqref{eq:dual} is well-posed  in general:  It  is generated by the norm 
$$
\varphi_0 \mapsto \int \sup_{x + [-1,1]^d} |\varphi_0| \dd x
$$
which is the norm of  the space  $L^\infty_{\textup{int}}$ as defined in 
\cite{AlHoLBMo06,AlIm09}. 
 Since $L^\infty_{\textup{int}} \subset L^1 \cap L^\infty$, it follows that $C_b \cap L^\infty_\INT$ is a natural $L^1$ type Banach space for the  dual  equation \eqref{eq:dual} and its solution semigroup $G_t$ in  \eqref{nldi}; see Theorem \ref{thm:qualitative-HJB} and Corollary \ref{cor-nv}. 

\medskip

  Our  results on $L^1$ viscosity solutions are of independent  interest, see in particular Theorem \ref{thm:quantitative-HJB-unif}. Let us comment them  further.  The estimates of \cite{AlHoLBMo06} are not in $L^\infty_\INT$ but in its predual $L^1_{\rm unif} \not\subset L^1$,  while \cite{AlIm09} gives weighted $L^\infty_\INT$ estimates for unbounded solutions with linear diffusions.  In \cite{EnJa14} there are $L^1$ estimates for fully nonlinear degenerate PDEs with isotropic diffusions and exponentially decaying initial data.  Equation \eqref{eq:dual} is fully nonlinear, degenerate, possibly anisotropic, and we consider general $L^\infty_\INT$ data while identifying $L^\infty_\INT$ as the most natural $L^1$ viscosity solution setting.

\medskip

 The rest of this paper is organized as follows.  We recall basic facts in Section \ref{sec:preliminaires},  we  state our main results in Section \ref{main-results}, and prove them in Section \ref{sec:proofs}.  For completeness,  some results for  minimal  discontinuous viscosity solutions are proved in 
\ref{app:viscosity}, a complete proof of well-posedness for $L^\infty$ entropy solutions  is  given  in 
\ref{app:entropy}, and further comments on our duality results are postponed to 
\ref{app-comp-referee} and \ref{app-referee}.

\section{Preliminaries}\label{sec:preliminaires}

This section recalls  basic facts on  $C_b$  viscosity and $L^\infty$ entropy solutions; for proofs, see e.g.  \cite{CrIsLi92,FlSo93,Bar94,BaCa-Do97} and \cite{ChPe03,BeKa04,Daf10}  respectively. We  also define  the space $L^\infty_\INT$. 

\subsection{Notation} \label{sec:notation}
 Throughout $\R^+:=[0,\infty)$, balls and cubes of $\R^d$ with center $x$ and radius $r>0$ are $B_r(x):=\{y:|y-x| <r\}$ and $Q_r(x):=x+(-r,r)^d$, the symbol ``$\textup{co}$''  denotes the convex hull of sets, ``$(\textup{ess}) \, \textup{Im}$'' the (essential) image of (measurable) functions, ``$\textup{Sp}$'' the spectrum of matrices, and $\mathbf{1}_U$ the indicator function 
of a set $U$. 

We follow standard notation for function spaces, e.g. $C_c$ denotes continuous functions with compact support, $BLSC$ (resp. $BUSC$) bounded lower (resp. upper) semicontinuous functions, $L^p$ stands for Lebesgue spaces, etc. For two normed spaces $X \subseteq Y$, we say that $X$ is continuously embedded into $Y$ if the canonical injection is continuous, and the completion of $X$ is denoted by $\overline{X}^{\|\cdot\|_X} \subseteq Y$.  

Concerning operations on functions, ``$\ast$'' is the convolution which is mostly taken in $x \in \R^d$, and we use ``$\ast_{x,t}$'' if it is taken in $(x,t) \in \R^{d+1}$, etc. To regularize functions of $x$, we use convolution with an approximate unit $\rho_\nu$ of the form
\begin{equation}\label{space-approx}
\rho_\nu(x):=\frac{1}{\nu^d} \rho \left(\frac{x}{\nu}\right),
\end{equation}
where $0 \leq \rho \in C^\infty_c(\R^d)$ and $\int \rho=1$, while for functions of $t$, we convolve with 
\begin{equation}\label{time-approx}
\theta_\nu(t):=\frac{1}{\nu} \theta \left(\frac{t}{\nu}\right),
\end{equation}
where $0 \leq \theta \in C_c^\infty(
 (-\infty,0) 
)$ and  $\int \theta=1$. If needed, we extend functions of $t \in \R^+$ by zero to all $t\in\R$ to give a meaning to the convolution. For locally bounded everywhere defined $\varphi_0:\R^d \to \R$ or $\varphi:\R^d \times \R^+ \to \R$, we define the infconvolution \cite{CrIsLi92,FlSo93,Bar94,BaCa-Do97} of e.g.  $\varphi_0$ for all $x$ by
\begin{equation}\label{infconvol}
(\varphi_0)_\varepsilon(x):=\inf_{y \in \R^d} \left\{\varphi_0(y)+\frac{|x-y|^2}{2 \varepsilon^2}\right\}.
\end{equation}
Here the $\inf$ is pointwise and, to avoid confusion, we will use distinct notation for $\essinf$, etc. The upper $\varphi^\ast$ (lower $\varphi_\ast$) semicontinuous envelope of $\varphi$ is defined as 
$$
\varphi^\ast(x,t) := \limsup_{(y,s) \to (x,t)}\varphi(y,s)\quad \left(\varphi_\ast(x,t) := \liminf_{(y,s)\to (x,t)}\varphi(y,s)\right).
$$
For a family $\left(\varphi_\veps=\varphi_\veps(x,t) \right)_{\veps>0}$, the upper and lower relaxed limits as $\veps \to 0^+$ are, using standard notation \cite{CrIsLi92, Bar94, BaCa-Do97}, 
\begin{equation}\label{relaxed-referee}
\wlimsup \varphi_\veps(x,t):=\limsup_{\substack{(y,s)\to(x,t) \\ \veps \to 0^+}}\varphi_\veps(y,s) \quad \forall (x,t) \in \R^d \times \R^+,
\end{equation}
and $\wliminf \varphi_\veps:=-\wlimsup \left(-\varphi_\veps\right)$. As is customary, we use the same notation $\wlimsup$ and $\wliminf$ also when the limits are taken in another variable than $\veps\to0^+$, e.g. $R\to \infty$. We write $\lim_{\varepsilon \downarrow 0}\uparrow\varphi_\veps$ for the limit if $\varphi_\veps(x,t)\nearrow \sup_{\varepsilon > 0}\varphi_\veps (x,t)$ as $\veps\searrow0$. We use similar notation for $\varphi_0=\varphi_0(x)$ as e.g. $(\varphi_0)^\ast(x):=\limsup_{y \to x}\varphi_0(y)$, etc. 

As concerning stochastic processes, we fix
\begin{equation}
\label{assumption-stochastic}
\begin{cases}
\mbox{a complete filtered probability space $(\Omega,\mathcal{F},\mathcal{F}_t,\mathbb{P})$, and}\\
\mbox{a standard $d$-dimensional Brownian $\boldsymbol{B}_t$ on this filtration}.
\end{cases}
\end{equation}
The associated expectation w.r.t. $\mathbb{P}$ is denoted by $\mathbb{E}$.
We will assume possibly without mentioning that all stochastic processes in this paper are defined on this filtered probability space, and that whenever we need a Brownian motion, then we take the above Brownian motion. 

\subsubsection*{Less standard notation}
Following \cite{AlHoLBMo06,AlIm09},
\begin{equation}\label{def-linftyint-referee}
L^\infty_\INT(\R^d):= \left\{\varphi_0 \in L^1_{\rm loc}(\R^d)\,:\, \|\varphi_0\|_{L^\infty_\INT}<\infty \right\},
\end{equation}
where $\|\varphi_0\|_{L^\infty_\INT} := \int \esssup_{\overline{Q}_1(x)} |\varphi_0| \dd x$. For the pointwise sup, we use 
$\|\varphi_0\|_\INT:=\int \sup_{\overline{Q}_1(x)} |\varphi_0| \dd x$. Note that $\|\varphi_0\|_\INT=\|\varphi_0\|_{L^\infty_\INT}$ if $\varphi_0$ is continuous. For more details about $L^\infty_\INT$, see Section \ref{sec-linftyint-int-referee}. 

For any $\varphi \in BLSC(\R \times \R^+)$, we associate a particular envelope defined as
\begin{equation}
\label{precise-representative}
\varphi_\#(x,t):= \liminf_{\substack{r \to 0^+ \\ y \to x }} \frac{1}{\textup{meas}(B_r(y))} \int_{B_r(y)} \varphi(z,t) \dd z.
\end{equation} 
This envelope will appear naturally in Theorem \ref{thm:dual} and more properties will be given in Section \ref{sec:proofs-bis}.

\subsection{Viscosity solutions of \eqref{eq:dual}}

We begin by introducing the correct notion of solutions for HJB equations \cite{CrIsLi92,FlSo93,Bar94,BaCa-Do97}.

\begin{definition}[Viscosity solutions]\label{def:viscsoln}
Assume \eqref{bassumption} and $\varphi_0:\R^d \to \R$ is bounded. 
\begin{enumerate}[\rm (a)]
\item A locally bounded function $\varphi:\R^d \times \R^+ \to\R$ is a viscosity subsolution (resp. supersolution) of \eqref{eq:dual} if
\smallskip
\begin{enumerate}[\rm (i)]
\item \label{rem-vse} for every $\phi \in C^\infty(\R^d\times \R^+)$ and local maximum $(x,t) \in \R^d \times (0,\infty)$ of $\varphi^\ast-\phi$ (resp. mininimum of $\varphi_\ast-\phi$),  
\begin{equation*}
\partial_t\phi(x,t)\leq \sup\nolimits_{\mathcal{E}} \left\{b \cdot D \phi(x,t)+\tr \left(a D^2\phi(x,t)\right) \right\}   \quad \mbox{(resp. $\geq$)},
\end{equation*}
\item\label{ref:id} and for every $x \in \R^d$,
$$
\varphi^\ast(x,0)\leq (\varphi_0)^\ast(x)  \quad \text{(resp. $\varphi_\ast(x,0) \geq (\varphi_0)_\ast(x)$)}.
$$
\end{enumerate} 
\item A function $\varphi$ is a viscosity solution if it is both a sub and supersolution. 
\end{enumerate}
\end{definition}

\begin{remark}
We say that $\varphi$ is a viscosity subsolution (resp. supersolution)  of \eqref{eq:dual-1} if \eqref{rem-vse} holds.
\end{remark}

We recall the well-known comparison and the well-posedness for \eqref{eq:dual} \cite{CrIsLi92,FlSo93}.

\begin{theorem}[Comparison principle]
\label{propofvissoln}
Assume \eqref{bassumption}. If $\varphi$ and $\psi$ are bounded sub and supersolutions of \eqref{eq:dual-1}, and 
\begin{equation*}
\varphi^\ast(x,0) \leq \psi_\ast(x,0) \quad \forall x \in \R^d,  
\end{equation*}
then $\varphi^\ast \leq \psi_\ast$ on $\R^d \times \R^+$. 
\end{theorem}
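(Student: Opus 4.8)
The plan is to prove this classical comparison principle by the doubling-of-variables technique combined with the parabolic theorem on sums (the Crandall--Ishii lemma). First I would record the properties of $H$ from \eqref{Hamiltonian} that the argument uses: under \eqref{bassumption}, $H$ is finite-valued (as $b$ and $\sigma^a$ are bounded), convex and hence continuous, degenerate elliptic (if $X\le Y$ then $H(p,X)\le H(p,Y)$, since each $a(\xi)=\sigma^a(\xi)(\sigma^a(\xi))^\T\in\mathbb{S}_d^+$), and Lipschitz in $p$ with constant $\|b\|_\infty$. The structural feature that makes this an easy instance of the general theory is that $b$ and $\sigma^a$ do not depend on $x$, so there is no modulus of $x$-continuity of the coefficients to absorb, and every error term in the estimates below comes solely from the penalization parameters.

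Next I would fix $T>0$ and aim to show $\varphi^\ast\le\psi_\ast$ on $\R^d\times[0,T]$. Arguing by contradiction, suppose there is $(\hat x,\hat t)$ with $\varphi^\ast(\hat x,\hat t)-\psi_\ast(\hat x,\hat t)=:2m>0$; we may take $0<\hat t<T$, replacing $T$ by $\hat t+1$ if necessary, since $\hat t=0$ is ruled out by the hypothesis $\varphi^\ast(\cdot,0)\le\psi_\ast(\cdot,0)$. For parameters $\veps,\beta,\lambda>0$ I would introduce
\begin{equation*}
\Phi(x,y,t):=\varphi^\ast(x,t)-\psi_\ast(y,t)-\frac{|x-y|^2}{2\veps}-\beta\big(\langle x\rangle+\langle y\rangle\big)-\frac{\lambda}{T-t},\qquad\langle z\rangle:=\sqrt{1+|z|^2}.
\end{equation*}
Since $\varphi^\ast,\psi_\ast$ are bounded while $\langle\cdot\rangle\to\infty$ in space and $\lambda/(T-t)\to\infty$ as $t\to T^-$, the supremum of $\Phi$ over $\R^d\times\R^d\times[0,T)$ is attained at some point $(x_\veps,y_\veps,t_\veps)$, and it is $\ge m>0$ once $\beta$ and $\lambda$ are small, because it dominates $\Phi(\hat x,\hat x,\hat t)$. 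Standard penalization estimates then give $|x_\veps-y_\veps|^2/\veps\to0$ as $\veps\to0$ with $\beta,\lambda$ frozen, with $\beta\langle x_\veps\rangle$ bounded and $x_\veps,y_\veps$ converging to a common limit along a subsequence. If $t_\veps=0$ along a subsequence, passing to the limit and using the upper/lower semicontinuity of $\varphi^\ast,\psi_\ast$ together with $\varphi^\ast(\cdot,0)\le\psi_\ast(\cdot,0)$ would force $\sup\Phi\le0$, a contradiction; hence $t_\veps\in(0,T)$ for $\veps$ small.

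The core step is then to apply the parabolic theorem on sums at the interior maximum $(x_\veps,y_\veps,t_\veps)$: it produces $\tau_1,\tau_2\in\R$ with $\tau_1-\tau_2=\lambda/(T-t_\veps)^2$ and matrices $X,Y\in\mathbb{S}_d$ such that $(\tau_1,p_1,X)$ lies in the closed parabolic second-order superjet of $\varphi^\ast$ at $(x_\veps,t_\veps)$ and $(\tau_2,p_2,Y)$ in the corresponding subjet of $\psi_\ast$ at $(y_\veps,t_\veps)$, where $p_1=\frac{x_\veps-y_\veps}{\veps}+\beta D\langle x_\veps\rangle$ and $p_2=\frac{x_\veps-y_\veps}{\veps}-\beta D\langle y_\veps\rangle$ (so $|p_1-p_2|\le2\beta$), together with the matrix inequality
\begin{equation*}
\begin{pmatrix}X&0\\0&-Y\end{pmatrix}\le\frac{3}{\veps}\begin{pmatrix}I&-I\\-I&I\end{pmatrix}+C\beta\begin{pmatrix}I&0\\0&I\end{pmatrix},
\end{equation*}
where $C$ depends only on $\sup|D^2\langle\cdot\rangle|$. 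The sub/supersolution inequalities give $\tau_1\le H(p_1,X)$ and $\tau_2\ge H(p_2,Y)$, so, using the Lipschitz bound in $p$,
\begin{equation*}
0<\frac{\lambda}{(T-t_\veps)^2}=\tau_1-\tau_2\le H(p_1,X)-H(p_2,Y)\le\big(H(p_1,X)-H(p_1,Y)\big)+2\beta\|b\|_\infty.
\end{equation*}
To bound the bracket I would fix $\xi\in\mathcal{E}$ and test the matrix inequality against $Z=\begin{pmatrix}\sigma^a(\xi)\\\sigma^a(\xi)\end{pmatrix}\in\R^{2d\times K}$: since $\begin{pmatrix}I&-I\\-I&I\end{pmatrix}Z=0$, we obtain $\tr(a(\xi)(X-Y))\le C\beta\,\tr(Z^\T Z)\le C'\beta$, hence $b(\xi)\cdot p_1+\tr(a(\xi)X)\le b(\xi)\cdot p_1+\tr(a(\xi)Y)+C'\beta\le H(p_1,Y)+C'\beta$; taking $\sup_\xi$ gives $H(p_1,X)-H(p_1,Y)\le C'\beta$. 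Combining the estimates yields $\lambda/T^2\le C''\beta$, and letting $\beta\to0$ with $\lambda$ fixed is the desired contradiction; thus $\varphi^\ast\le\psi_\ast$ on $\R^d\times[0,T]$ for every $T$, hence on $\R^d\times\R^+$.

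The argument is conceptually routine, so the main obstacle will be the correct bookkeeping in the parabolic theorem on sums together with the order of limits — one must first send $\veps\to0$ with $\beta,\lambda$ frozen, to make the quadratic penalty disappear and pin $t_\veps$ into the open interval $(0,T)$, and only afterwards send $\beta\to0$. The single place where the hypothesis \eqref{bassumption} enters essentially is the identity $\begin{pmatrix}I&-I\\-I&I\end{pmatrix}Z=0$ for $Z=\begin{pmatrix}\sigma^a(\xi)\\\sigma^a(\xi)\end{pmatrix}$, which annihilates the second-order contribution uniformly in $\xi$; this works precisely because $\sigma^a$ is independent of $x$, so neither compactness nor any continuity on $\mathcal{E}$ is required.
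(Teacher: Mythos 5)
Your proof is correct: it is the standard doubling-of-variables argument with the parabolic Crandall--Ishii lemma, with the spatial penalization $\beta\langle\cdot\rangle$ handling the unbounded domain, the term $\lambda/(T-t)$ handling the time horizon, and the key cancellation $\tr\left(a(\xi)(X-Y)\right)\leq C\beta$ uniformly in $\xi$ obtained by testing the matrix inequality with $Z=\bigl(\sigma^a(\xi),\sigma^a(\xi)\bigr)$ — which is possible precisely because the coefficients are $x$-independent. This is essentially the proof in the references the paper points to (\cite{CrIsLi92,FlSo93,Bar94,BaCa-Do97}); the paper itself only recalls Theorem \ref{propofvissoln} as classical and gives no proof, and your bookkeeping (first $\veps\to0$ with $\beta,\lambda$ fixed to force $t_\veps\in(0,T)$, then $\beta\to0$ with $\lambda$ fixed) is in order.
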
 

\begin{theorem}[Existence and uniqueness]
\label{exisuniqvissoln}
Assume $\eqref{bassumption}$ and $\varphi_0\in C_b(\R^d)$. Then there exists a unique viscosity solution $\varphi \in C_b(\R^d \times \R^+)$ of \eqref{eq:dual}. 
\end{theorem}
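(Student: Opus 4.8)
The plan is to extract uniqueness for free from the comparison principle (Theorem \ref{propofvissoln}), and to obtain existence by Perron's method \cite{Ish89,CrIsLi92}. For uniqueness: if $\varphi,\psi\in C_b(\R^d\times\R^+)$ are two viscosity solutions of \eqref{eq:dual}, then since $\varphi$, $\psi$ and $\varphi_0$ are continuous, part \eqref{ref:id} of Definition \ref{def:viscsoln} forces $\varphi(\cdot,0)=\psi(\cdot,0)=\varphi_0$; as $\varphi$ is then a bounded subsolution and $\psi$ a bounded supersolution of \eqref{eq:dual-1} with equal initial traces, Theorem \ref{propofvissoln} yields $\varphi\le\psi$, and the reverse inequality follows by exchanging the roles of $\varphi$ and $\psi$. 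For existence I would define $\varphi$ as the supremum of all viscosity subsolutions of \eqref{eq:dual-1} lying between a subsolution barrier $\underline u$ and a supersolution barrier $\overline u$ that both attain the datum $\varphi_0$ at $t=0$; the standard Perron lemma then makes $\varphi^\ast$ a subsolution and $\varphi_\ast$ a supersolution of \eqref{eq:dual-1}, and the comparison principle pinches $\varphi_\ast=\varphi^\ast$ into the desired continuous solution.

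The crux is building the barriers, and here I would \emph{not} use the naive candidates $\varphi_0(x)\pm Ct$: when $\varphi_0$ is merely continuous these need not be viscosity super/subsolutions, because the second-order term in $H$ rules out ``wrong-way'' kinks (a genuinely diffusive HJB equation does not preserve such one-sided regularity). Instead I would touch $\varphi_0$ pointwise by smooth functions: by continuity of $\varphi_0$, for each $z\in\R^d$ and $\delta>0$ there is $C_{z,\delta}>0$ with $\varphi_0(x)\le\varphi_0(z)+\delta+C_{z,\delta}\big(\sqrt{1+|x-z|^2}-1\big)$ for all $x$, and since $x\mapsto\sqrt{1+|x-z|^2}$ has globally bounded gradient and Hessian, one can pick $\lambda_{z,\delta}>0$ (depending only on $C_{z,\delta}$, $d$ and the bounds in \eqref{bassumption}) so that
\begin{equation*}
\psi_{z,\delta}(x,t):=\Big(\varphi_0(z)+\delta+C_{z,\delta}\big(\sqrt{1+|x-z|^2}-1\big)+\lambda_{z,\delta}\,t\Big)\wedge\big(\|\varphi_0\|_\infty+\delta\big)
\end{equation*}
is a \emph{bounded} viscosity supersolution of \eqref{eq:dual-1} — the minimum of the classical supersolution in brackets and the constant supersolution $\|\varphi_0\|_\infty+\delta$ — which is continuous, satisfies $\psi_{z,\delta}(\cdot,0)\ge\varphi_0$, and equals $\varphi_0(z)+\delta$ at $(z,0)$. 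Setting $\overline u:=\inf_{z,\delta}\psi_{z,\delta}$ produces a bounded viscosity supersolution of \eqref{eq:dual-1} with $\|\overline u\|_\infty\le\|\varphi_0\|_\infty$ and $\overline u(\cdot,0)=\varphi_0$; reflecting the construction gives a bounded subsolution barrier $\underline u$ with $\|\underline u\|_\infty\le\|\varphi_0\|_\infty$ and $\underline u(\cdot,0)=\varphi_0$.

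With the barriers in hand, the Perron function $\varphi$ satisfies $\underline u\le\varphi\le\overline u$, hence $\|\varphi\|_\infty\le\|\varphi_0\|_\infty$, and the standard Perron argument (sup of subsolutions is a subsolution; otherwise $\varphi_\ast$ could be locally ``bumped up'', contradicting maximality) gives that $\varphi^\ast$ is a viscosity sub- and $\varphi_\ast$ a viscosity supersolution of \eqref{eq:dual-1}. Squeezing $\varphi^\ast$ and $\varphi_\ast$ between the \emph{continuous} barriers $\psi_{x,\delta}$ and its mirror image at $t=0$ and letting $\delta\to0$ yields $\varphi^\ast(x,0)\le\varphi_0(x)\le\varphi_\ast(x,0)$ for every $x$, so $\varphi^\ast$ and $\varphi_\ast$ are bounded viscosity sub- and supersolutions of the \emph{full} problem \eqref{eq:dual} with ordered initial traces; Theorem \ref{propofvissoln} then gives $\varphi^\ast\le\varphi_\ast$, and combined with $\varphi_\ast\le\varphi\le\varphi^\ast$ this forces $\varphi=\varphi^\ast=\varphi_\ast\in C_b(\R^d\times\R^+)$, a viscosity solution of \eqref{eq:dual}. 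I expect the one genuinely delicate point to be exactly this barrier construction — in particular keeping the barriers bounded (so that Theorem \ref{propofvissoln}, stated only for bounded sub/supersolutions, can be invoked) while still having them attain the possibly non-uniformly-continuous datum $\varphi_0$ at $t=0$; the rest is the by-now routine Perron machinery.
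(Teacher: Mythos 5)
Your argument is correct, and it is worth noting that the paper itself does not prove Theorem \ref{exisuniqvissoln}: it is recalled as classical and referred to \cite{CrIsLi92,FlSo93,Bar94,BaCa-Do97}, so there is no in-paper proof to compare against. What you wrote is precisely the standard route in those references: uniqueness is immediate from the comparison principle (Theorem \ref{propofvissoln}) once continuity of the solutions and of $\varphi_0$ turns Definition \ref{def:viscsoln}\eqref{ref:id} into equality of traces, and existence follows by Perron's method. Your key technical contribution, the barriers $\psi_{z,\delta}$ built from $C_{z,\delta}\bigl(\sqrt{1+|x-z|^2}-1\bigr)+\lambda_{z,\delta}t$ capped by the constant supersolution $\|\varphi_0\|_\infty+\delta$, is the right way to handle data that are merely bounded and continuous (not uniformly continuous), and you correctly identified why the naive $\varphi_0(x)\pm Ct$ fails for second-order $H$; the choice $\lambda_{z,\delta}\gtrsim C_{z,\delta}\bigl(\|b\|_\infty+\|\tr(a)\|_\infty\bigr)$ works because the gradient and Hessian of $\sqrt{1+|\cdot|^2}$ are globally bounded and $H(p,X)\leq \|b\|_\infty|p|+\|\tr(a)\|_\infty\sup_{\lambda\in\textup{Sp}(X)}\lambda^+$. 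Two small points that deserve a word but are covered by the framework: the inf over $(z,\delta)$ of supersolutions (and the Perron sup of subsolutions) is handled by the envelope conventions already built into Definition \ref{def:viscsoln} and Proposition \ref{prop:ssi}; and the pinching of the initial traces should be done, as you indicate, against the individual continuous barriers $\psi_{x,\delta}$ rather than against $\overline u$ itself, since $\overline u$ is only semicontinuous. With those remarks, the proof is complete and consistent with the comparison theorem as stated for bounded sub- and supersolutions.
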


\begin{remark}\label{rem:max}
 By the comparison principle, $\inf \varphi_0 \leq \varphi \leq \sup \varphi_0$ and we have the following contraction property:
$
\|\varphi-\psi\|_\infty \leq \|\varphi_0-\psi_0\|_\infty
$
for every  pair of  solutions $\varphi$ and $\psi$ with initial data $\varphi_0$ 
and $\psi_0$. 
\end{remark}

We may take $\varphi_0$ to be discontinuous as in \eqref{weighted-stochastic}. In that case, we lose  uniqueness and we have to work with minimal and maximal solutions \cite{CrLiSo89,BaSoSo93,GiSa01} (see also \cite{BaCa-Do97} for bilateral solutions). 
 For our considerations, we only need minimal solutions.

\begin{theorem}[Minimal solutions]\label{thm:propextremalsoln1}
Assume $\eqref{bassumption}$ and $\varphi_0:\R^d \to \R$ bounded. Then there exists a minimal viscosity solution $
\underline{\varphi} \in BLSC(\R^d \times \R^+) 
$ of \eqref{eq:dual},
in the sense that
$
\underline{\varphi} \leq \varphi$ for any bounded viscosity solution $\varphi$ of \eqref{eq:dual}.
Moreover 
$
\underline{\varphi}(x,t=0)=(\varphi_0)_\ast(x)
$
for any $x \in \R^d$.
\end{theorem}

Note that $\underline{\varphi}$ is unique by definition. Actually, it is more precisely the minimal supersolution.

\begin{proposition}\label{prop:extremal-reformulation}
Assume $\eqref{bassumption}$ and $\varphi_0:\R^d \to \R$ is bounded. Then any bounded supersolution $\varphi$ of \eqref{eq:dual} is such that $
\underline{\varphi} \leq \varphi_\ast$. 
\end{proposition}

\begin{remark}\label{comp-extremal}
In particular, we have the following comparison principle: $\underline{\varphi} \leq \underline{\psi}$ for any bounded $\varphi_0 \leq \psi_0$.
\end{remark}

For completeness, the proofs of Theorems \ref{thm:propextremalsoln1} and  Proposition  \ref{prop:extremal-reformulation} are given in 
\ref{app:min-max} because  \cite{CrLiSo89,BaSoSo93,GiSa01,BaCa-Do97} consider slightly different problems. Let us continue with representation formulas for the solution $\underline{\varphi}$ from control theory \cite{FlSo93,BaCa-Do97,GoSe10,GoSe11}.

\begin{proposition}[First order]\label{representation-deterministic}
Assume \eqref{bassumption}, $a \equiv 0$, and $\varphi_0:\R^d \to \R$ bounded. Then the minimal viscosity solution of \eqref{eq:dual} is given by
\begin{equation*}
\underline{\varphi}(x,t)=\sup_{x+t\mathcal{C}} (\varphi_0)_\ast \quad \forall (x,t) \in \R^d \times \R^+,
\end{equation*}
where 
$
\mathcal{C}= \overline{\textup{co} \left\{\textup{Im} (b)\right\}}.
$
\end{proposition}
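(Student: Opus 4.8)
The plan is to identify the formula as a Hopf--Lax formula for a convex, $x$-independent Hamiltonian and then pass to arbitrary bounded data by a monotone approximation. Since $b$ is bounded by \eqref{bassumption}, the set $\mathcal{C}=\overline{\textup{co}\{\textup{Im}(b)\}}$ is nonempty, convex and compact, and --- support functions being insensitive to taking convex hull and closure --- the Hamiltonian of \eqref{eq:dual-1} is the support function $H(p)=\sup_\xi b(\xi)\cdot p=\sup_{v\in\mathcal{C}}v\cdot p$ of $\mathcal{C}$, so \eqref{eq:dual-1} reads $\partial_t\varphi=H(D\varphi)$. Set
\begin{equation*}
w(x,t):=\sup_{y\in x+t\mathcal{C}}(\varphi_0)_\ast(y),\qquad (x,t)\in\R^d\times\R^+;
\end{equation*}
it is bounded, $\inf\varphi_0\le w\le\sup\varphi_0$, and the target is $\underline{\varphi}=w$. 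For $\varphi_0\in C_b(\R^d)$ one has $w\in C_b(\R^d\times\R^+)$ (the compact translate $x+t\mathcal{C}$ moves continuously in Hausdorff distance and stays, locally, in a fixed compact set on which $\varphi_0$ is uniformly continuous), and $w$ is the classical Hopf--Lax formula --- equivalently the value function $w(x,t)=\sup\{\varphi_0(x+\int_0^t\xi_s\dd s):\xi_\cdot\in L^\infty((0,t);\mathcal{C})\}$ of a deterministic control problem --- hence a viscosity solution of \eqref{eq:dual} (see e.g.\ \cite{FlSo93,BaCa-Do97}), so that $w=\underline{\varphi}$ in this case by Theorems~\ref{exisuniqvissoln} and \ref{thm:propextremalsoln1}. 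A self-contained verification of this step proceeds from the dynamic programming identity $w(x,t)=\sup_{v\in\mathcal{C}}w(x+(t-s)v,s)$ for $0\le s\le t$, which follows in turn from the additivity $s\mathcal{C}+t\mathcal{C}=(s+t)\mathcal{C}$ of the convex set $\mathcal{C}$.

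For a general bounded $\varphi_0$, since $(\varphi_0)_\ast$ is bounded and lower semicontinuous I would pick $\varphi_0^n\in C_b(\R^d)$ with $\varphi_0^n\uparrow(\varphi_0)_\ast$ pointwise (e.g.\ $\varphi_0^n(x)=\inf_y\{(\varphi_0)_\ast(y)+n|x-y|\}$) and denote by $\varphi^n$ the corresponding solutions. By the previous step $\varphi^n(x,t)=\sup_{x+t\mathcal{C}}\varphi_0^n$; the family is uniformly bounded and, by the comparison principle (Theorem~\ref{propofvissoln}), nondecreasing in $n$; and interchanging the two suprema gives $\sup_n\varphi^n=w$. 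Then $w\le\underline{\varphi}$: each $\varphi^n$ is a continuous viscosity solution with $\varphi^n(\cdot,0)=\varphi_0^n\le(\varphi_0)_\ast=\underline{\varphi}(\cdot,0)$, so Theorem~\ref{propofvissoln} applied to the subsolution $\varphi^n$ and the supersolution $\underline{\varphi}$ yields $\varphi^n\le\underline{\varphi}$, and taking the supremum over $n$ gives $w\le\underline{\varphi}$.

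For the reverse inequality $\underline{\varphi}\le w$: as the increasing limit of the uniformly bounded continuous viscosity solutions $\varphi^n$, the lower semicontinuous function $w$ is precisely the lower half-relaxed limit of $(\varphi^n)_n$, so by the standard stability theorem for viscosity solutions $w=w_\ast$ is a viscosity supersolution of \eqref{eq:dual-1}. Moreover $w_\ast(x,0)=w(x,0)=(\varphi_0)_\ast(x)$, because $x+t\mathcal{C}$ shrinks to $\{x\}$ as $t\to0^+$ and $(\varphi_0)_\ast$ is lower semicontinuous. Hence $w$ is a supersolution of \eqref{eq:dual} in the sense of Definition~\ref{def:viscsoln}, and Proposition~\ref{prop:extremal-reformulation} (the bound $\underline{\varphi}\le\varphi_\ast$ for bounded supersolutions) gives $\underline{\varphi}\le w_\ast=w$. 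Combined with the previous paragraph, $\underline{\varphi}=w$.

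I expect the main obstacle to be the viscosity-solution bookkeeping for discontinuous data in the last step: checking that the monotone limit of the $\varphi^n$ is genuinely a viscosity supersolution and has the correct lower semicontinuous initial trace at $t=0$ --- here $x+t\mathcal{C}$ need not contain the origin, so one must exploit its collapse to $\{x\}$. By contrast, the classical Hopf--Lax case and the inequality $w\le\underline{\varphi}$ are comparatively soft. If one prefers to avoid half-relaxed limits altogether, the supersolution property of $w$ can instead be extracted directly from the dynamic programming principle, which then becomes the technical heart of the argument.
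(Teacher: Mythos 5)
Your argument is correct and follows essentially the same route as the paper: rewrite the Hamiltonian as the support function of $\mathcal{C}$, invoke the known Hopf--Lax/control formula for regular data, and approximate $(\varphi_0)_\ast$ monotonically from below by inf-convolutions before exchanging suprema. The only difference is that your two-sided limit passage (comparison, half-relaxed limits, and Proposition \ref{prop:extremal-reformulation}) re-derives by hand exactly what the paper obtains in one stroke from the stability of extremal solutions, Proposition \ref{stability-extremal}.
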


In the second order case, we need a probabilistic framework.

\begin{proposition}[Second order]\label{rem-dyn}
Assume \eqref{bassumption}, \eqref{assumption-stochastic}, and  
\begin{equation}\label{ass:regular}
\mbox{the set $\mathcal{E}$ is compact and the functions $b(\cdot)$ and $\sigma^a(\cdot)$ are continuous.}
\end{equation} 
Then the minimal viscosity solution of \eqref{eq:dual} is given by 
\begin{equation*}
\underline{\varphi}(x,t)=\sup_{\boldsymbol{\xi}_\cdot\in\Bigxi} \mathbb{E} \left\{(\varphi_0)_\ast (\boldsymbol{X}_t^x)\right\},
\end{equation*}
where $\Bigxi$ is the set of progressively measurable $\mathcal{E}$-valued processes  and $\boldsymbol{X}_s^x$ an Ito process satisfying the SDE
$$
\begin{cases}
\dd \boldsymbol{X}_s^x=b(\boldsymbol{\xi}_s) \dd s+\sqrt{2} \, \sigma^a(\boldsymbol{\xi}_s) \dd \boldsymbol{B}_s, \quad s > 0,\\ 
\boldsymbol{X}_{s=0}^x=x.
\end{cases}
$$
\end{proposition}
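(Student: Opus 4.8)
The plan is to reduce the statement to the classical stochastic control representation for continuous data and then pass to the limit, approximating $(\varphi_0)_\ast$ from below by continuous functions and using the half-relaxed limit method to keep track of the equation up to $t=0$. First, for $\varphi_0\in C_b(\R^d)$ I would invoke the classical verification theorem: since \eqref{ass:regular} makes $b(\cdot)$ and $\sigma^a(\cdot)$ bounded, for every progressively measurable $\mathcal{E}$-valued control $\boldsymbol{\xi}_\cdot$ the SDE in the statement has a unique strong solution $\boldsymbol{X}^x_\cdot$, the value function $V(x,t):=\sup_{\boldsymbol{\xi}_\cdot}\mathbb{E}[\varphi_0(\boldsymbol{X}^x_t)]$ satisfies the dynamic programming principle, and $V$ is the (bounded, continuous) viscosity solution of \eqref{eq:dual}; see \cite{FlSo93,BaCa-Do97,GoSe10,GoSe11}. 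By Theorem \ref{exisuniqvissoln} this $V$ coincides with the solution $\varphi=\underline{\varphi}=\overline{\varphi}$ in this case.

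For general bounded $\varphi_0$, put $g:=(\varphi_0)_\ast$, which is bounded and lower semicontinuous, and let $g^n(x):=\inf_y\{g(y)+n|x-y|\}$ be its Moreau--Yosida approximations, so that $g^n\in C_b(\R^d)$ (Lipschitz), $\inf g\le g^n\le g^{n+1}\le g$, and $g^n\uparrow g$ pointwise. Let $\varphi^n\in C_b(\R^d\times\R^+)$ be the viscosity solution of \eqref{eq:dual} with datum $g^n$. By the comparison principle (Theorem \ref{propofvissoln}) the sequence $\varphi^n$ is nondecreasing, so $V:=\sup_n\varphi^n=\lim_n\varphi^n$ is a well-defined bounded function. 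Using the continuous-data representation, then interchanging the two suprema and applying the monotone convergence theorem inside the expectation (the $g^n$ being nondecreasing and uniformly bounded below),
\[
V(x,t)=\sup_n\sup_{\boldsymbol{\xi}_\cdot}\mathbb{E}[g^n(\boldsymbol{X}^x_t)]=\sup_{\boldsymbol{\xi}_\cdot}\sup_n\mathbb{E}[g^n(\boldsymbol{X}^x_t)]=\sup_{\boldsymbol{\xi}_\cdot}\mathbb{E}[g(\boldsymbol{X}^x_t)],
\]
which is the asserted formula. It then remains to prove $V=\underline{\varphi}$.

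For $V\le\underline{\varphi}$: each $\varphi^n$ is a bounded subsolution of \eqref{eq:dual-1} with $(\varphi^n)^\ast(x,0)=g^n(x)\le g(x)=\underline{\varphi}_\ast(x,0)$ (using continuity of $\varphi^n$ up to $t=0$, $\underline{\varphi}\in BLSC$, and Theorem \ref{thm:propextremalsoln1}), while $\underline{\varphi}$ is a bounded supersolution, so Theorem \ref{propofvissoln} gives $\varphi^n\le\underline{\varphi}$ and hence $V\le\underline{\varphi}$. For $\underline{\varphi}\le V$: extend $V$ to $t=0$ by $V(x,0):=g(x)$; since the $\varphi^n$ are continuous on $\R^d\times\R^+$ and nondecreasing, $V$ is lower semicontinuous on $\R^d\times\R^+$ and coincides with the half-relaxed lower limit of $(\varphi^n)$, so by the Barles--Perthame stability of viscosity supersolutions under half-relaxed lower limits, $V=V_\ast$ is a viscosity supersolution of \eqref{eq:dual-1} with $V_\ast(x,0)=g(x)=(\varphi_0)_\ast(x)$; thus $V$ is a bounded supersolution of the initial value problem \eqref{eq:dual}, and Proposition \ref{prop:extremal-reformulation}\eqref{extremal-and-semisol} yields $\underline{\varphi}\le V_\ast=V$. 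Combining the two inequalities gives $V=\underline{\varphi}$, which is the claim.

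The main obstacle is not the continuous case, which is standard control theory, but the passage to discontinuous $\varphi_0$: one cannot run the dynamic programming argument directly with the discontinuous terminal cost $(\varphi_0)_\ast$ (that would require a measurable-selection argument), which forces the monotone approximation, and one must check carefully that the pointwise increasing limit $V$, once extended by $(\varphi_0)_\ast$ at $t=0$, is genuinely the half-relaxed lower limit of the $\varphi^n$ so that the stability theorem applies up to the initial time. It is also worth noting that only the supersolution half of the argument, combined with Proposition \ref{prop:extremal-reformulation}\eqref{extremal-and-semisol}, is needed, which spares us a separate verification of the subsolution property at $t=0$.
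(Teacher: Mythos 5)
Your proof is correct and follows essentially the same route as the paper's: the classical stochastic control representation for regular data, an increasing inf-convolution-type approximation of $(\varphi_0)_\ast$ by Lipschitz functions, and then an exchange of suprema plus monotone convergence to obtain the formula. The only difference is that where the paper identifies the increasing limit with $\underline{\varphi}$ by citing the stability of extremal solutions (Proposition \ref{stability-extremal}), you re-derive that identification in place via comparison, the half-relaxed lower limit, and Proposition \ref{prop:extremal-reformulation}\eqref{extremal-and-semisol} --- which is precisely the content of that proposition's proof in the appendix.
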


These results are standard for continuous viscosity solutions \cite{FlSo93,BaCa-Do97}, see also \cite{BaCa-Do97,GoSe10,GoSe11} for maximal solutions. For minimal solutions, we did not find any reference so we provide the proofs in 
\ref{app:representation}. 

\subsection{Entropy solutions of \eqref{E}}

Well-posedness  of \eqref{E} in $L^\infty$ is  essentially  established  in \cite{Fripr} for smooth fluxes,  see \cite{ChPe03,BeKa04} for previous results in $L^\infty \cap L^1$ or $L^1$.  Let us now recall these  results in the form  needed here  and provide  complementary  proofs in 
\ref{app:entropy} for completeness.

\begin{definition}[Entropy-entropy flux triple]
We say that $(\eta,q,r)$ is an entropy-entropy flux triple if $\eta \in C^2(\R)$ is convex, $q'=\eta' F'$ and $r'=\eta' A$.
\end{definition}

Given $\beta \in C(\R)$, we also need the notation
\begin{equation*}
\zeta_{ik}(u):=\int_0^u\sigma^{\scriptscriptstyle A}_{ik}(\xi)\dd \xi \quad \mbox{and} \quad \zeta_{ik}^\chain(u):=\int_0^u \sigma^{\scriptscriptstyle A}_{ik}(\xi)\chain(\xi)\dd \xi.
\end{equation*}

\begin{definition}[Entropy solutions]\label{defentropy}
Assume \eqref{fassumption} and $u_0\in L^\infty(\R^d)$. 
A function
$u \in L^\infty(\R^d \times \R^+)\cap C(\R^+;L_{\textup{loc}}^1(\R^d))$ is an entropy solution of \eqref{E} if 
\begin{enumerate}[{\rm (a)}]
\item\label{item:energy}
$
\sum_{i=1}^d\partial_{x_i}\zeta_{ik}(u)\in L_{\textup{loc}}^2(\R^d \times \R^+)
$
for any $k=1,\ldots,K$,
\smallskip
\item\label{item:chain} for any $k=1,\ldots,K$ and any $\chain\in C(\R)$
$$
\sum_{i=1}^d\partial_{x_i}\zeta_{ik}^\chain(u)=\chain(u)\sum_{i=1}^d\partial_{x_i}\zeta_{ik}(u) \in L_{\textup{loc}}^2(\R^d \times \R^+),
$$
\item\label{item:entropy} and for all entropy-entropy flux triples $(\eta,q,r)$ and $0 \leq \phi\in C_c^\infty(\R^d \times \R^+)$,
\begin{equation*}
\begin{split}
&\iint_{\R^d \times \R^+} \left(\eta(u)\partial_t\phi+\sum_{i=1}^d q_i(u)\partial_{x_i}\phi+\sum_{i,j=1}^{d}r_{ij}(u)\partial_{x_ix_j}^2\phi\right)\dif x\dd t\\
&+\int_{\R^d} \eta(u_0(x)) \phi(x,0) \dd x \geq\iint_{\R^d \times \R^+}\eta''(u)\sum_{k=1}^K\left(\sum_{i=1}^d\partial_{x_i}\zeta_{ik}(u)\right)^2\phi\dd x \dd t.
\end{split}
\end{equation*}
\end{enumerate}
\end{definition}

\begin{theorem}[Existence and uniqueness]
\label{thm:existenceuniquenessentropysoln}
Assume \eqref{fassumption} and $u_0 \in L^\infty(\R^d)$. Then there exists a unique entropy solution $u \in L^\infty(\R^d \times \R^+)\cap C(\R^+;L_{\textup{loc}}^1(\R^d))$ of \eqref{E}. 
\end{theorem}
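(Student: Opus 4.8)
The plan is to prove existence and uniqueness of entropy solutions separately, following the now-classical Kru\v{z}kov--Carrillo--Chen--Perthame program but carefully adapted to the non-integrable $L^\infty$ setting. For uniqueness I would use the doubling-of-variables technique on entropy inequalities: given two entropy solutions $u,v$, double the space-time variables, use the Kru\v{z}kov entropies $\eta(\cdot)=|\cdot - k|$ (approximated by smooth convex functions so that Definition \ref{defentropy} applies, then passed to the limit), and crucially invoke the chain-rule property \eqref{item:chain} together with the parabolic dissipation term on the right-hand side of \eqref{item:entropy} to absorb the second-order contributions. The anisotropic diffusion forces one to handle the term $\sum_{ij} r_{ij}(u)\partial^2_{x_ix_j}\phi$ via the entropy-flux identity $r' = \eta' A$ and the regularity $\sum_i \partial_{x_i}\zeta_{ik}(u)\in L^2_{\textup{loc}}$; this is exactly where \cite{ChPe03} differs from the isotropic case \cite{Car99}. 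Since solutions need not be integrable, one cannot directly integrate against $\phi\equiv1$; instead I would work with a cutoff $\phi(x,t)=\theta(t)\psi_R(x)$ where $\psi_R(x) = \psi(x/R)$, derive a local inequality, and then either let $R\to\infty$ exploiting finite speed of propagation for the convective part plus the diffusion estimate, or — more robustly — prove the weighted/local $L^1$ contraction $\int_{|x-x_0|<\rho}|u-v|(x,t)\dd x \le \int_{|x-x_0|<\rho + Ct}|u_0-v_0|\dd x + (\textup{diffusive correction})$, from which uniqueness follows on every ball and hence a.e.

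For existence I would regularize: replace $A$ by $A_\veps := A + \veps I$ (uniformly parabolic), mollify $F$ and $\sigma^\A$ if needed to meet smoothness requirements, and truncate/mollify $u_0$ to get $u_0^\veps \in L^\infty \cap (\textup{smooth})$. The regularized problem $\partial_t u^\veps + \diver F_\veps(u^\veps) = \diver(A_\veps(u^\veps)Du^\veps)$ is a uniformly parabolic quasilinear equation with bounded data, so classical theory gives a smooth solution, and the maximum principle gives the uniform bound $\|u^\veps\|_\infty \le \|u_0\|_\infty$. Multiplying by $\eta'(u^\veps)$ and integrating against $\phi\ge0$ produces the entropy inequality with an $O(\veps)$ error plus the correct dissipation term; one also obtains the $L^2_{\textup{loc}}$ bound on $\sum_i\partial_{x_i}\zeta_{ik}^\veps(u^\veps)$ from the energy estimate (testing with $u^\veps$ against the localized equation). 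Compactness is the delicate point: in the non-integrable setting I would extract a limit locally via a compensated-compactness / div-curl or, more simply, via the uniform $L^2_{\textup{loc}}$ control on the "diffusion gradients" combined with an interpolation/Aubin--Lions-type argument on balls to get strong $L^1_{\textup{loc}}$ convergence of $u^\veps$ (after first passing to the kinetic or Young-measure limit if nonlinearity of $F$ obstructs strong compactness when $A$ degenerates). Passing to the limit in all three conditions of Definition \ref{defentropy} then yields an entropy solution; the chain rule \eqref{item:chain} and energy bound \eqref{item:energy} survive the limit by weak lower semicontinuity and the strong convergence just obtained.

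The continuity in time, $u\in C(\R^+;L^1_{\textup{loc}}(\R^d))$, I would get as a by-product: the entropy inequalities give that $t\mapsto \int \eta(u(x,t))\psi_R(x)\dd x$ has bounded variation and the solution is a.e.-in-time continuous into $L^1_{\textup{loc}}$; redefining on a null set and using the entropy inequality to control left and right limits upgrades this to genuine continuity, and the initial condition is attained in $L^1_{\textup{loc}}$ by the same argument letting $t\to0^+$.

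The main obstacle will be the compactness step for existence: with a genuinely anisotropic \emph{and} degenerate $A$ and a nonlinear flux $F$, neither the BV estimates of the first-order Kru\v{z}kov theory nor the parabolic smoothing of the isotropic case are available, so one must combine the $L^2_{\textup{loc}}$ bound on $\sum_i\partial_{x_i}\zeta_{ik}(u^\veps)$ (which controls the diffusion but says nothing about directions where $A$ vanishes) with a kinetic-formulation or Young-measure argument to handle the convective oscillations — essentially reproving the hard part of \cite{ChPe03} while keeping everything local so as not to assume integrability. The uniqueness doubling argument, while technical, is more routine once one commits to the local formulation and carefully tracks the anisotropic second-order terms via $r'=\eta'A$ and the chain rule \eqref{item:chain}.
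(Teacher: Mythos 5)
Your uniqueness argument is essentially the paper's route: the doubling of variables with Kruzhkov-type entropies (carried out as in \cite{BeKa04}, with the chain rule \eqref{item:chain} and the local energy estimate of Lemma \ref{finiteenergy} handling the anisotropic second-order terms) yields the Kato inequality of Lemma \ref{dualequation}, and uniqueness then follows by choosing a good weight. One caveat: for second-order equations the purely local contraction with a cone $\{|x-x_0|<\rho+Ct\}$ that you sketch does not hold; the workable choice is an exponential weight such as $\e^{-\sqrt{\veps^2+|x|^2}}$, giving the weighted contraction of Lemma \ref{standarduniqueness}, which is what the paper uses. Since you allow a ``diffusive correction'', this is a presentational rather than substantive difference.

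The existence half, however, has a genuine gap. Your compactness step relies on kinetic/Young-measure averaging, compensated compactness, or an Aubin--Lions argument driven by the $L^2_{\textup{loc}}$ bound on $\sum_i\partial_{x_i}\zeta_{ik}(u^\veps)$. Under \eqref{fassumption} there is no nondegeneracy hypothesis on $(F,A)$: $F'$ is merely locally bounded and $A$ may vanish identically on open sets of directions and values (e.g. $A\equiv 0$ with a linear flux), so averaging lemmas give no strong compactness, the diffusion gradients control nothing in the degenerate directions, and compensated compactness is unavailable in this anisotropic multi-dimensional setting. This is also not how \cite{ChPe03} proceeds: there, convergence of the approximations is obtained from contraction-type (uniqueness) estimates comparing two approximate solutions, not from compactness of the sequence. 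The paper's Appendix \ref{app:entropy} avoids the issue entirely: it truncates the data, $u_0^n:=u_0\mathbf{1}_{|x|<n}\in L^1\cap L^\infty$, invokes the known $L^1\cap L^\infty$ well-posedness of \cite{ChPe03,BeKa04} for the solutions $u_n$, and uses the weighted contraction of Lemma \ref{standarduniqueness} to show $(u_n)$ is Cauchy in $C([0,T];L^1_{\textup{loc}}(\R^d))$. The limit is then shown to be an entropy solution by passing to the limit in Definition \ref{defentropy}: the local energy estimate gives weak $L^2_{\textup{loc}}$ convergence of $\sum_i\partial_{x_i}\zeta_{ik}(u_n)$, the chain rule passes to the limit by the strong $L^1_{\textup{loc}}$ convergence, and the quadratic dissipation term is handled by writing it with $\beta=\sqrt{\eta''}$ and using weak lower semicontinuity of the $L^2$ norm. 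If you want to keep a self-contained vanishing-viscosity construction instead, you would have to replace your compactness step by a Cauchy-sequence argument comparing $u^\veps$ and $u^{\veps'}$ through a doubling/contraction estimate with the same exponential weight; as written, the plan would fail precisely at the step you flag as the main obstacle.
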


See \cite[Theorem 1.1]{Fripr} or \ref{app:entropy} for the proof.

\begin{remark}
\begin{enumerate}[{\rm (a)}]
\item 
In the $L^1$ settings  
of \cite{ChPe03,BeKa04}, the following contraction principle holds: For solutions $u$ and $v$ of \eqref{E} with initial data $u_0$ and $v_0$,
\begin{equation*}
\|u(\cdot,t)-v(\cdot,t)\|_{L^1} \leq \|u_0-v_0\|_{L^1} \quad \forall t \geq 0.
\end{equation*}
\item In the $L^\infty$ setting 
of \cite{Fripr}, uniqueness is based on the weighted $L^1$ contraction principle \eqref{standard}, see also Lemma \ref{standarduniqueness} in 
%
\ref{app:entropy}.
\item In all 
cases,
we have comparison and maximum principles as stated in
Lemma \ref{lem:max-comp} in 
%
\ref{app:entropy}.
\end{enumerate}
\end{remark}
In $L^\infty$, uniqueness is based on  a doubling of variables arguments
 developed in \cite{Kru70,Car99,BeKa04}. This argument leads  to  \eqref{dualeq} below, and this inequality will be the starting point of our  analysis. 

\begin{lemma}[Kato inequality]
\label{dualequation}
Assume \eqref{fassumption} and $u,v$ are entropy solutions of \eqref{E} with initial
data $u_0, v_0\in L^\infty(\R^d)$. Then for all $T \geq 0$ and nonnegative test functions $\phi \in C_c^{\infty}(\R^d \times [0,T])$,
\begin{multline}
\int_{\R^d} |u-v|(x,T)\phi(x,T)\dd x \leq \int_{\R^d}|u_0-v_0|(x)\phi(x,0)\dd x\\
+\iint_{\R^d \times (0,T)} \left(|u-v| \partial_t \phi+\sum_{i=1}^d q_i(u,v)\partial_{x_i}\phi+\sum_{i,j=1}^d r_{ij}(u,v)\partial_{x_ix_j}^2\phi\right) \dif x \dd t,
\label{dualeq}
\end{multline}
where
$$
q_i(u,v) =\sgn(u-v) \int_v^u F_i'(\xi) \dd \xi, \quad
r_{ij}(u,v) =\sgn(u-v)\int_v^u A_{ij}(\xi)\dd \xi.
$$
\end{lemma}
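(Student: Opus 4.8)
The plan is to prove the Kato inequality \eqref{dualeq} by the Kruzhkov doubling-of-variables technique, adapted to the anisotropic degenerate diffusion as in \cite{Kru70,Car99,ChPe03,BeKa04}. First I would regularize the Kruzhkov entropy $r\mapsto|r|$ by a family of smooth strictly convex entropies $\eta_\veps\in C^2(\R)$ with $\eta_\veps''\ge0$, $\eta_\veps\to|\cdot|$ and $\eta_\veps'\to\sgn(\cdot)$ locally uniformly off the origin (e.g.\ $\eta_\veps(r)=\sqrt{r^2+\veps^2}-\veps$), and introduce, for a frozen level $c\in\R$, the entropy fluxes $q^c_{i,\veps}$, $r^c_{ij,\veps}$ of the entropy $r\mapsto\eta_\veps(r-c)$ via $\partial_u q^c_{i,\veps}(u)=\eta_\veps'(u-c)F_i'(u)$ and $\partial_u r^c_{ij,\veps}(u)=\eta_\veps'(u-c)A_{ij}(u)$, normalised so that $q^c_{i,\veps}(c)=r^c_{ij,\veps}(c)=0$. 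As $\veps\to0$ these converge boundedly a.e.\ to the $q_i(u,c)$ and $r_{ij}(u,c)$ of the statement.

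Next comes the doubling. For a.e.\ fixed $(y,s)$ I apply the entropy inequality in Definition~\ref{defentropy}\,\ref{item:entropy} to $u=u(x,t)$ with level $c=v(y,s)$, and symmetrically to $v=v(y,s)$ with level $c=u(x,t)$, against the test function
\[
\psi(x,t,y,s)=\phi\!\left(\tfrac{x+y}{2},\tfrac{t+s}{2}\right)\omega_\delta(x-y)\,\tilde\omega_{\delta'}(t-s),
\]
where $\phi$ is the target test function (extended to a neighbourhood of $\R^d\times[0,T]$) and $\omega_\delta$, $\tilde\omega_{\delta'}$ are standard mollifiers in space and time. Integrating the first inequality over $(y,s)$, the second over $(x,t)$ and adding, the convective terms combine into an expression converging, as $\delta,\delta'\to0$, to $\sum_i q_i(u,v)\partial_{x_i}\phi$; the time-derivative terms into $|u-v|\partial_t\phi$; and the initial terms into $\int|u_0-v_0|\phi(\cdot,0)$.

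The hard part is the diffusion terms, and this is where the anisotropy and the degeneracy are felt (this is Carrillo's, and Chen--Perthame's, contribution). Expanding the second-order operator on $\psi$ produces, besides the contributions converging to $\sum_{ij}r_{ij}(u,v)\partial^2_{x_ix_j}\phi$, ``mixed'' $x$--$y$ second-derivative terms. Using the chain rule in Definition~\ref{defentropy}\,\ref{item:chain} and the $L^2_{\mathrm{loc}}$-regularity in Definition~\ref{defentropy}\,\ref{item:energy} of $\sum_i\partial_{x_i}\zeta_{ik}(\cdot)$, I would rewrite all diffusive contributions so that, letting $\delta'\to0$ and then $\delta\to0$, the mixed term cancels against the two entropy-dissipation measures on the right-hand sides of the entropy inequalities for $u$ and for $v$, leaving only the nonnegative remainder
\[
\iint_{\R^d\times(0,T)}\eta_\veps''(u-v)\sum_{k=1}^K\Bigl(\sum_{i=1}^d\partial_{x_i}\bigl(\zeta_{ik}(u)-\zeta_{ik}(v)\bigr)\Bigr)^2\phi\,\dif x\,\dd t\ \ge\ 0,
\]
which may then be dropped. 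Making this cancellation rigorous — identifying the weak-$L^2$ limits of the mixed terms, justifying the chain rule for the doubled and mollified nonlinearities, and checking the sign — is the delicate step, since degeneracy rules out any regularity or compactness shortcut and forces systematic use of \ref{item:energy}--\ref{item:chain}.

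Finally I would pass to the limits. Letting $\delta'\to0$ and $\delta\to0$ uses the time-continuity $u,v\in C(\R^+;L^1_{\mathrm{loc}}(\R^d))$ together with $L^1_{\mathrm{loc}}$-continuity of translations to collapse $v(y,s)\to v(x,t)$ and to identify all convective and diffusive limits; the terminal term $\int|u-v|(\cdot,T)\phi(\cdot,T)$ is recovered by taking $\tilde\omega_{\delta'}$, together with an auxiliary temporal cut-off approximating $\mathbf{1}_{[0,T]}$, and invoking the time-continuity once more to evaluate at $t=T$, again discarding a nonnegative boundary remainder. Letting $\veps\to0$, one has $\eta_\veps(u-v)\to|u-v|$ in $L^1_{\mathrm{loc}}$, $q^{\,\cdot}_{i,\veps}\to q_i(u,v)$ and $r^{\,\cdot}_{ij,\veps}\to r_{ij}(u,v)$ boundedly a.e., while $\eta_\veps''\ge0$ keeps the discarded remainders nonnegative throughout; this yields \eqref{dualeq}. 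The full computation is deferred to Appendix~\ref{app:entropy}.
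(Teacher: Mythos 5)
Your plan is essentially the paper's own proof: the paper establishes \eqref{dualeq} by invoking the doubling-of-variables computations of Theorem 3.1 in \cite{BeKa04} (with $l=\infty$ and zero renormalization measures), checking via the local energy estimate of Lemma \ref{finiteenergy} that everything goes through with the $L^2_{\textup{loc}}$ conditions \eqref{item:energy}--\eqref{item:chain} of Definition \ref{defentropy}, and then recovering the initial and final terms by approximating $\phi(x,t)\mathbf{1}_{[0,T]}(t)$ using the $C(\R^+;L^1_{\textup{loc}})$ continuity, exactly as in your last paragraph. Your outline of the Kruzhkov doubling and the cancellation of the mixed diffusion terms through the chain rule is the content of the cited computations, so the two arguments coincide in substance.
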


See 
\ref{app:entropy} for precise references to the computations in \cite{BeKa04} on how to show this lemma in our setting. 

\subsection{The function space $L^\infty_\INT$}\label{sec-linftyint-int-referee}

Let us now give some basic properties on the space which was defined in \eqref{def-linftyint-referee}.

\begin{theorem}\label{thm:uniformly-integrable}
 The space  $L^\infty_\INT(\R^d)$ is a Banach space, and it  is continuously embedded into $L^1 \cap L^\infty(\R^d)$.
\end{theorem}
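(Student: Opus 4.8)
The plan is to verify the three claims in order: (i) $\|\cdot\|_{L^\infty_\INT}$ is a norm, (ii) the space is complete, and (iii) the embedding $L^\infty_\INT \hookrightarrow L^1 \cap L^\infty$ is continuous. For (iii), note that $\overline{Q}_1(x)$ contains $x$, so $|\varphi_0(x)| \leq \esssup_{\overline{Q}_1(x)}|\varphi_0|$ a.e., giving $\|\varphi_0\|_{L^1} \leq \|\varphi_0\|_{L^\infty_\INT}$ directly; for the $L^\infty$ bound, observe that if $\|\varphi_0\|_{L^\infty_\INT} < \infty$ then the integrand $\esssup_{\overline{Q}_1(x)}|\varphi_0|$ is finite for a.e. $x$, and in fact one shows $\|\varphi_0\|_\infty \leq \|\varphi_0\|_{L^\infty_\INT}$ by a covering/averaging argument: the function $x \mapsto \esssup_{\overline{Q}_1(x)}|\varphi_0|$ is lower semicontinuous and bounded below by the essential sup of $|\varphi_0|$ on any unit cube, so if it were integrable while $\|\varphi_0\|_\infty = \infty$ we could find a set of positive measure of cubes on which the essential sup is arbitrarily large, forcing the integral to diverge — more carefully, $\esssup_{\overline{Q}_1(x)}|\varphi_0| \geq \esssup_{\overline{Q}_1(y)}|\varphi_0|$ whenever $\overline{Q}_1(y) \subset \overline{Q}_2(x)$, which lets one bound $\|\varphi_0\|_\infty$ by $C_d \|\varphi_0\|_{L^\infty_\INT}$ after integrating over a fixed cube. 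This yields continuity of the embedding with an explicit constant.

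For (i), homogeneity and $\|\varphi_0\|_{L^\infty_\INT} = 0 \Rightarrow \varphi_0 = 0$ are immediate (the latter from the $L^1$ bound in (iii)). The triangle inequality follows from $\esssup_{\overline{Q}_1(x)}|\varphi_0 + \psi_0| \leq \esssup_{\overline{Q}_1(x)}|\varphi_0| + \esssup_{\overline{Q}_1(x)}|\psi_0|$ pointwise in $x$, then integrating. One should also check measurability of $x \mapsto \esssup_{\overline{Q}_1(x)}|\varphi_0|$: this function is lower semicontinuous (for fixed $\lambda$, the set $\{x : \esssup_{\overline{Q}_1(x)}|\varphi_0| > \lambda\}$ is open, since if the essential sup over $\overline{Q}_1(x)$ exceeds $\lambda$ then $|\varphi_0| > \lambda$ on a positive-measure subset, which remains inside $\overline{Q}_1(x')$ for $x'$ near $x$), hence Borel measurable.

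For (ii), completeness, I would take a Cauchy sequence $(\varphi_n)$ in $L^\infty_\INT$. By (iii) it is Cauchy in $L^1 \cap L^\infty$, so $\varphi_n \to \varphi$ in $L^1 \cap L^\infty$ for some $\varphi \in L^1 \cap L^\infty$; passing to a subsequence, $\varphi_n \to \varphi$ a.e. Then I apply Fatou's lemma: for fixed $m$, $\|\varphi_m - \varphi\|_{L^\infty_\INT} = \int \esssup_{\overline{Q}_1(x)}|\varphi_m - \varphi|\,dx \leq \liminf_n \int \esssup_{\overline{Q}_1(x)}|\varphi_m - \varphi_n|\,dx = \liminf_n \|\varphi_m - \varphi_n\|_{L^\infty_\INT}$, using that $\esssup_{\overline{Q}_1(x)}|\varphi_m - \varphi| \leq \liminf_n \esssup_{\overline{Q}_1(x)}|\varphi_m - \varphi_n|$ (which holds because a.e. convergence on the full space gives, for a.e. $x$, that $|\varphi_m - \varphi| \leq \liminf |\varphi_m - \varphi_n|$ a.e. on $\overline{Q}_1(x)$, hence the same for essential sups — here one needs a Fubini-type argument to say the a.e. convergence holds a.e. on a.e. cube). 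The Cauchy property then makes the right side small, showing $\varphi_m \to \varphi$ in $L^\infty_\INT$ and in particular $\varphi \in L^\infty_\INT$.

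The main obstacle is the interchange of essential suprema with limits in the completeness proof — one must be careful that a.e. convergence $\varphi_n \to \varphi$ on $\R^d$ transfers to a statement about essential suprema over the overlapping cubes $\overline{Q}_1(x)$ uniformly enough to apply Fatou. The clean way is to fix the null set $N$ outside of which $\varphi_n \to \varphi$ pointwise; then for \emph{every} $x$, $\esssup_{\overline{Q}_1(x)}|\varphi_m - \varphi| = \sup_{y \in \overline{Q}_1(x)\setminus N}|\varphi_m(y) - \varphi(y)| \leq \sup_{y \in \overline{Q}_1(x)\setminus N}\liminf_n |\varphi_m(y) - \varphi_n(y)| \leq \liminf_n \sup_{y \in \overline{Q}_1(x)\setminus N}|\varphi_m(y) - \varphi_n(y)| = \liminf_n \esssup_{\overline{Q}_1(x)}|\varphi_m - \varphi_n|$, where the last equality uses that $N$ is null so removing it doesn't change the essential sup. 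This sidesteps any Fubini subtlety. A secondary point worth stating carefully is the covering constant $C_d$ in the $L^\infty$ bound of (iii), but that is routine.
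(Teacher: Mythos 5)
The paper itself does not prove this theorem (it refers to \cite{AlHoLBMo06,AlIm09}), so there is no internal proof to compare with; a self-contained argument like yours is welcome, but two steps as written are incorrect. The serious one is in your completeness argument. Your ``clean way'' ends with the identity $\sup_{y\in\overline{Q}_1(x)\setminus N}|\varphi_m(y)-\varphi_n(y)|=\esssup_{\overline{Q}_1(x)}|\varphi_m-\varphi_n|$, justified ``because $N$ is null''. Removing a null set only gives $\sup_{\overline{Q}_1(x)\setminus N}\ge\esssup_{\overline{Q}_1(x)}$, and the inequality you need at that point is the opposite one: the chosen representative of $\varphi_m-\varphi_n$ may exceed its essential supremum on a null set that has nothing to do with $N$ (e.g.\ a representative of $0$ equal to $1$ on the rationals), since $N$ was produced only by the a.e.\ convergence $\varphi_{n_k}\to\varphi$. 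So the chain breaks at its last step. The conclusion you want, namely $\esssup_{\overline{Q}_1(x)}|\varphi_m-\varphi|\le\liminf_n\esssup_{\overline{Q}_1(x)}|\varphi_m-\varphi_n|$ for every $x$, is nevertheless true and can be obtained by staying at the level of essential suprema: for a.e.\ $y$ one has $|h(y)|\le\esssup_{B_r(y)}|h|$ for all $r>0$ (cover $\R^d$ by countably many balls of radius $r/2$); enlarging $N$ so that this holds for the countably many functions $h=\varphi_m-\varphi_n$, one gets for every $x$ and a.e.\ $y$ in the interior of $\overline{Q}_1(x)$ that $|\varphi_m(y)-\varphi(y)|=\lim_n|\varphi_m(y)-\varphi_n(y)|\le\liminf_n\esssup_{\overline{Q}_1(x)}|\varphi_m-\varphi_n|$, and taking $\esssup$ in $y$ (the boundary being null) gives the bound, after which your Fatou step is fine. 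Alternatively, the summable-series criterion for completeness together with countable subadditivity of $x\mapsto\esssup_{\overline{Q}_1(x)}|\cdot|$ avoids the interchange altogether.

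The second, smaller, problem is in the $L^\infty$ bound of the embedding: the auxiliary inequality ``$\esssup_{\overline{Q}_1(x)}|\varphi_0|\ge\esssup_{\overline{Q}_1(y)}|\varphi_0|$ whenever $\overline{Q}_1(y)\subset\overline{Q}_2(x)$'' is false as stated ($\varphi_0$ may be large on $\overline{Q}_2(x)\setminus\overline{Q}_1(x)$). The correct averaging argument is, for instance: given $x_0$ and $\varepsilon>0$, the set $A=\{|\varphi_0|>\esssup_{\overline{Q}_1(x_0)}|\varphi_0|-\varepsilon\}\cap\overline{Q}_1(x_0)$ has positive measure, and for any density point $y$ of $A$ every $x'\in y+(-1,1)^d$ satisfies $\esssup_{\overline{Q}_1(x')}|\varphi_0|\ge\esssup_{\overline{Q}_1(x_0)}|\varphi_0|-\varepsilon$; integrating over this cube of volume $2^d$ yields $\|\varphi_0\|_{L^\infty}\le 2^{-d}\|\varphi_0\|_{L^\infty_\INT}$, which is the continuity you claim (your $L^1$ bound, the norm axioms and the measurability via lower semicontinuity are fine, up to the standard a.e.\ caveat you already note). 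With these two repairs your proof is complete and consistent with the cited references.
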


 See \cite{AlHoLBMo06,AlIm09} for the proof and choice of  the above notation.  We also need the following result:

\begin{lemma}\label{lem:criterion}
For any $r > 0$ and $\varepsilon \geq 0$, there is a constant $C_{r,\varepsilon} \geq 0$ such that 
\begin{equation*}
\int \sup_{\overline{Q}_{r+\varepsilon}(x)} |\varphi_0| \dd x \leq C_{r,\varepsilon} \int \sup_{\overline{Q}_r(x)} |\varphi_0| \dd x \quad \forall \varphi_0:\R^d \to \R.
\end{equation*}
\end{lemma}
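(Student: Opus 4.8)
The plan is to reduce the statement to a purely geometric covering estimate in $\R^d$ that has nothing to do with the particular function $\varphi_0$. Write $R := r+\veps$, and observe that since $\overline{Q}_R(x)$ is compact we may cover it by finitely many translates of the smaller cube $\overline{Q}_r$: concretely, choose a finite set of points $y_1,\dots,y_N$ (depending only on $r$ and $R$, hence only on $r$ and $\veps$) such that
\begin{equation*}
\overline{Q}_R(0) \subset \bigcup_{m=1}^N \overline{Q}_r(y_m),
\end{equation*}
for instance by taking $y_m$ in the lattice $\tfrac{r}{\sqrt d}\,\Z^d$ intersected with $\overline{Q}_R(0)$; the number $N$ is of order $(1+R/r)^d$. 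Translating, $\overline{Q}_R(x)\subset \bigcup_{m=1}^N \overline{Q}_r(x+y_m)$ for every $x$, and therefore
\begin{equation*}
\sup_{\overline{Q}_R(x)} |\varphi_0| \leq \sum_{m=1}^N \sup_{\overline{Q}_r(x+y_m)} |\varphi_0|.
\end{equation*}

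The second step is to integrate this pointwise inequality in $x$ over $\R^d$ and use translation invariance of Lebesgue measure: for each fixed $m$,
\begin{equation*}
\int_{\R^d} \sup_{\overline{Q}_r(x+y_m)} |\varphi_0| \dd x = \int_{\R^d} \sup_{\overline{Q}_r(x)} |\varphi_0| \dd x,
\end{equation*}
by the change of variables $x\mapsto x - y_m$. Summing over $m$ gives
\begin{equation*}
\int_{\R^d} \sup_{\overline{Q}_R(x)} |\varphi_0| \dd x \leq N \int_{\R^d} \sup_{\overline{Q}_r(x)} |\varphi_0| \dd x,
\end{equation*}
so the claim holds with $C_{r,\veps} := N$, which depends only on $r$ and $\veps$. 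The case $\veps = 0$ is trivial with constant $1$, consistent with $N=1$ when $R=r$.

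There is essentially no hard part here; the only point requiring a little care is measurability, namely that $x\mapsto \sup_{\overline{Q}_r(x)}|\varphi_0|$ is a Borel function of $x$ so that the integrals above make sense for an arbitrary (not necessarily measurable) $\varphi_0:\R^d\to\R$. This is standard: the supremum of $|\varphi_0|$ over the closed cube $\overline{Q}_r(x)$ equals its supremum over the countable dense set $\overline{Q}_r(x)\cap\Q^d$ only when $\varphi_0$ has some regularity, so in general one interprets $\sup$ literally and notes that $x\mapsto\sup_{\overline{Q}_r(x)}|\varphi_0|$ is lower semicontinuous in $x$ when $\overline{Q}_r(x)$ is replaced by the open cube, and upper/lower envelopes differ on a negligible set; alternatively one simply works with the outer integral. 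In the setting of \cite{AlHoLBMo06,AlIm09} the functions of interest are measurable (indeed continuous after the reductions used), so this subtlety does not affect any application, and one may quote \cite[Lemma 2.5.1]{AlIm09} directly. I would therefore present the covering argument above as the proof and relegate the measurability remark to a parenthetical comment.
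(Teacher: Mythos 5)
Your covering argument is correct: a finite family of translates of $\overline{Q}_r$ covering $\overline{Q}_{r+\veps}(0)$, subadditivity of the (nonnegative) supremum over a finite union, and translation invariance of the integral give the estimate with $C_{r,\veps}$ equal to the number of cubes in the cover; the measurability caveat you flag is genuine but harmless, since one can either run the whole argument with outer integrals (which are monotone, finitely subadditive and translation invariant) or observe that the lemma is only ever applied to semicontinuous functions. This is, however, a different route from the paper, which does not reprove Lemma \ref{lem:criterion} at all --- it quotes \cite[Lemma 2.5.1]{AlIm09} --- and instead establishes in Appendix \ref{app:optimalineq} the sharper Lemma \ref{lem:optimal-inequality}: for nonnegative upper semicontinuous $\varphi_0$ the inequality holds with the optimal constant $\left(\frac{r+\veps}{r}\right)^d$, proved by reducing to dimension one coordinate by coordinate and then, for step functions, estimating the measure of the $\veps$-dilation of a finite union of intervals each of length at least $2r$. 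The trade-off is clear: your proof is shorter, self-contained and valid for arbitrary $\varphi_0$, but its constant is an integer covering number (at least $2^d$ as soon as $\veps>0$), whereas the paper's optimal constant, which tends to $1$ as $\veps\to0^+$, is what is actually used later, e.g. in the proofs of Corollary \ref{cor:quantitative-continuity} and Lemma \ref{lem:continuity-modulus}; so your argument fully proves Lemma \ref{lem:criterion} but could not serve as a substitute for Lemma \ref{lem:optimal-inequality}.
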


\begin{remark}
This result will be used with the pointwise sup for discontinuous $\varphi_0$, typically lower or upper semicontinuous.
\end{remark}

The proof can be found in  \cite{AlHoLBMo06,AlIm09}, see e.g.  \cite[Lemma 2.5.1]{AlIm09}. 

\section{Main results}
\label{main-results}

In this section we precisely state our results: the weighted $L^1$ contraction estimate for \eqref{E} in Section \ref{sec:weighted}, the optimality of the weight in Section \ref{sec:duality}, and the interpretation in terms of dual nonlinear semigroup in Section \ref{sec:unif-int}. Section \ref{sec:unif-int} contains the $L^1$ theory for \eqref{eq:dual}, and the long proofs are postponed
to Section \ref{sec:proofs}. 

\subsection{Weighted $L^1$ contraction for entropy solutions}
\label{sec:weighted}

The weight $\varphi$ of our new estimate for \eqref{E} is the viscosity solution of \eqref{eq:dual} with data \eqref{hyp-referee}, a problem which we rewrite in the more convenient form\footnote{Viscosity solutions are understood as in Definition \ref{def:viscsoln} via Problem \eqref{eq:dual} with data \eqref{hyp-referee}. But we let the reader check that we can equivalently redefine this notion via \eqref{eq:dual-true}. More precisely $\varphi$ is a viscosity supersolution of \eqref{eq:dual} with data \eqref{hyp-referee} if and only if for every $\phi \in C^\infty$ and local max $(x,t)$ of $\varphi^\ast-\phi$, $\partial_t \phi(x,t) \leq \esssup_{m \leq \xi \leq M} \left\{F'(\xi) \cdot D \phi(x,t)+\tr \left(A(\xi) D^2\phi(x,t)\right) \right\}$, etc.} 
\begin{subequations}\label{eq:dual-true}
\begin{align}
\partial_t \varphi=\esssup_{m \leq \xi \leq M} \left\{F'(\xi) \cdot D \varphi+\tr \left(A(\xi)D^2\varphi\right) \right\}
& \qquad x \in \R^d, t>0,\label{eq:dual-true-1}
\\
\varphi(x,0)=\varphi_0(x)   & \qquad x \in \R^d,
\end{align}
\end{subequations}
for given $m < M$ and  $\varphi_0$. 

\begin{theorem}[Weighted $L^1$ contraction]
\label{thm:weighted}
 Assume \eqref{fassumption}, $m<M$, $u_0=u_0(x)$ and $v_0=v_0(x)$ are measurable with values in $[m,M]$, and $0 \leq \varphi_0 \in BLSC(\R^d)$. Then the corresponding entropy solutions $u$ and $v$ of \eqref{E} and minimal viscosity solution $\underline{\varphi}$ of \eqref{eq:dual-true} satisfy 
\begin{equation}\label{nonlinear-dual}
\int_{\R^d} |u-v|(x,t) \varphi_0(x) \dd x \leq \int_{\R^d} |u_0-v_0|(x) \underline{\varphi}(x,t)\dd x \quad \forall t\geq 0.
\end{equation}
\end{theorem}

\begin{remark}\label{rem:weighted}
\begin{enumerate}[{\rm (a)}]
\item
The right-hand side of \eqref{nonlinear-dual} can be infinite. To get finite integrals, it suffices to take $u_0-v_0\in L^1$. We shall see later that another sufficient condition is that $\varphi_0 \in L^\infty_\INT$, since $\underline{\varphi}$ will then be $L^1$ in space by Theorem \ref{thm:quantitative-HJB-unif}.
\item\label{weighted-supersol} The same result holds
when $\underline{\varphi}$ is replaced by any measurable supersolution of  \eqref{eq:dual-true},
since it is greater than $\underline{\varphi}$. 
\item We also point the interested reader to Lemma \ref{lem:L1}. There we prove that a consequence of the above result is that $u_0-v_0\in L^1$ implies $u-v \in C(\R^+;L^1(\R^d))$.
\end{enumerate}
\end{remark}

From control theory there exist representation formulas for $\varphi$ in the first and second order cases, see Propositions \ref{representation-deterministic} and \ref{rem-dyn}. Combining the above result with these representation formulas give us very precise domain of dependence results. In the first order case, we recover the precise results of \cite{Pog18}, while in the second order case the result is new.

\begin{corollary}[First order equations]
\label{cor:wd}
Assume \eqref{fassumption} with $A \equiv 0$,
$m<M$, $u_0$ and $v_0$ are measurable functions with values in
$[m,M]$, and $u$ and $v$ are entropy solutions of \eqref{E} with initial data $u_0$ and $v_0$. Then
\begin{equation*}
\int_{B} |u-v|(x,t) \dd x \leq \int_{B-t \mathcal{C}} |u_0-v_0|(x) \dd x
\end{equation*}
for
any Borel set $B \subseteq \R^d$ and $t \geq 0$, where
$$
\mathcal{C}=\overline{\textup{co} \left\{\textup{ess} \, \textup{Im} \left(\restriction{(F')}{[m,M]} \right) \right\}}.
$$
\end{corollary}

\begin{proof}
Let $U \supseteq B$ be an open set and take $\varphi_0=\mathbf{1}_U$. By  Proposition \ref{representation-deterministic}, the minimal solution of \eqref{eq:dual-true} is $\underline{\varphi}(x,t)=\mathbf{1}_{U-t \mathcal{C}}(x)$. Apply then Theorem \ref{thm:weighted} and take the infimum over all open $U \supseteq B$. 
\end{proof}

\begin{corollary}[Second order equations]\label{cor:ws}
Assume \eqref{fassumption}, \eqref{assumption-stochastic}, 
$F'(\cdot)$ and $\sigma^\A(\cdot)$ continuous, $m<M$, $u_0$ and $v_0$ in $L^\infty(\R^d,[m,M])$, and $u$ and $v$ entropy solutions of \eqref{E} with $u_0$ and $v_0$ as initial data. Then for any open $U \subseteq \R^d$ and $t \geq 0$, 
\begin{equation*}
\int_{U} |u-v|(x,t) \dd x \leq \int_{\R^d} |u_0-v_0|(x) \sup_{\boldsymbol{\xi}_\cdot\in\Bigxi} \mathbb{P} \left(\boldsymbol{X}_t^x \in U\right) \dif x,
\end{equation*}
where $\Bigxi$ is the set of progressively measurable $[m,M]$-valued processes and $\boldsymbol{X}_s^x$  is an Ito process satisfying the SDE \eqref{SDE}.
\end{corollary}

\begin{proof}
Take $\varphi_0=\mathbf{1}_U$ and apply Proposition \ref{rem-dyn} to compute $\underline{\varphi}$ in Theorem \ref{thm:weighted}.
\end{proof}

The proof of Theorem \ref{thm:weighted} is given in Section \ref{sec:contraction}.

\subsection{Optimality of the weight}\label{sec:duality}

Let us now discuss the optimality of the weight $\underline{\varphi}$ in a weighted $L^1$ contraction estimate for \eqref{E} such as \eqref{nonlinear-dual}. The first step is a reformulation of the definition of viscosity supersolutions of \eqref{eq:dual-true-1} in terms of weights in $L^1$ contraction estimates for \eqref{E}. 

\begin{theorem}[Weights and supersolutions]\label{thm:dual} Assume \eqref{fassumption}, $m<M$, and $0 \leq \varphi \in BLSC(\R^d \times \R^+)$. Then the  statements  below are equivalent.
\begin{enumerate}[{\rm (I)}]
\item\label{nonlinear-dual-optimal} For any measurable functions $u_0$ and $v_0$ with values in $[m,M]$ and entropy solutions $u$ and $v$ of \eqref{E} with initial data $u_0$ and $v_0$,
\begin{equation*}
\int_{\R^d} |u-v|(x,t) \varphi(x,s) \dd x \leq \int_{\R^d} |u_0-v_0|(x) \varphi(x,t+s)\dd x \quad \forall t,s \geq 0.
\end{equation*}
\item\label{item:dual} The function $\varphi_\#$ (cf. \eqref{precise-representative}) is a viscosity supersolution of \eqref{eq:dual-true-1}. 
\end{enumerate}
\end{theorem}
 
\begin{remark}
\begin{enumerate}[\rm (a)]
\item We will see in Lemma \ref{lem:limit-reg}\eqref{item:star} that $\varphi_\#(\cdot,t)=\varphi(\cdot,t)$
a.e. in $\R^d$, for any $t$. Hence $\varphi_\#$ satisfies \eqref{nonlinear-dual-optimal} if and only if $\varphi$ does. 
\item For a fixed $t$, the classical precise representative \cite{EvGa15,Pon16} of $\varphi(\cdot,t)$, is defined over Lebesgue points (in space) as
$$
\hat{\varphi}(x,t):=\lim_{r \to 0^+} \frac{1}{\textup{meas}(B_r(x))} \int_{B_r(x)} \varphi(y,t) \dd y.
$$
Assigning the value $\sup \varphi$ at all other points, and taking the lower semicontinuous envelope (in $x$), will exactly give $\varphi_\#(\cdot,t)$. 
\item Although $\varphi \in BLSC$ makes sense everywhere, we need to consider another precise representative in $x$ for the viscosity inequalities to hold. This is because these inequalities are pointwise while \eqref{nonlinear-dual-optimal} does not depend on the choice of such representatives. If e.g. modifying $\varphi$ only at some $(x_0,t_0)$ such that
$$
\varphi(x_0,t_0) < \liminf_{(x_0,t_0) \neq (x,t) \to (x_0,t_0)} \varphi(x,t),
$$ 
we would preserve \eqref{nonlinear-dual-optimal} while losing the viscosity inequalities.\footnote{Indeed $\varphi-\phi$ would achieve a local min in $(x_0,t_0)$, for all $\phi \in C^\infty$.}
\item We do not need to change  the precise representative in $t$, roughly speaking because we consider $BLSC$ weights satisfying \eqref{nonlinear-dual-optimal} for all times.
\item For simplicity, we restrict to $BLSC$ weights since this regularity is shared by $\underline{\varphi}$ from Theorem \ref{thm:weighted} and most of the weights from the literature. But we have a similar result for merely measurable weights in $(x,t)$; see 
\ref{app-comp-referee} for completeness.
\end{enumerate}
\end{remark}

We will therefore roughly speaking deduce from the comparison principle that our weight is optimal in the class of weights 
\begin{equation*}
\mathscr{W}_{m,M,\varphi_0}:=
\left\{0 \leq \varphi \in BLSC(\R^d \times \R^+) \text{ satisfying \eqref{nonlinear-dual-optimal} and $\varphi(t=0) \geq \varphi_0$} \right\}.
\end{equation*}

\begin{corollary}[Optimality of the weight]\label{cor:opt} Assume \eqref{fassumption}, $m<M$, and $0 \leq \varphi_0 \in BLSC(\R^d)$. 
Then the weight $\underline{\varphi}$ from Theorem \ref{thm:weighted} belongs to the class $\mathscr{W}_{m,M,\varphi_0}$ and satisfies
\begin{equation*}
(\underline{\varphi})_\#(x,t) = \inf \left\{\varphi_\#(x,t): \varphi \in \mathscr{W}_{m,M,\varphi_0} \right\} \quad \forall (x,t) \in \R^d \times \R^+.
\end{equation*}
\end{corollary}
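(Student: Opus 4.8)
The plan is to prove the two assertions in turn: first that $\underline{\varphi}\in\mathscr{W}_{m,M,\varphi_0}$, and then that $(\underline{\varphi})_\#$ is the claimed pointwise infimum. Once membership is known, the bound $(\underline{\varphi})_\#(x,t)\ge\inf\{\varphi_\#(x,t):\varphi\in\mathscr{W}_{m,M,\varphi_0}\}$ is immediate (the left-hand side is one of the terms in the infimum), so the genuine content is membership together with the reverse inequality, and it is for the latter that Theorem~\ref{thm:dual} does the heavy lifting.

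For membership, the facts that $\underline{\varphi}\in BLSC(\R^d\times\R^+)$ with $\underline{\varphi}(\cdot,0)=(\varphi_0)_\ast=\varphi_0\ge\varphi_0$ are part of Theorem~\ref{thm:propextremalsoln1} (using $\varphi_0\in BLSC(\R^d)$), while $\underline{\varphi}\ge0$ follows by comparing with the zero solution through Proposition~\ref{prop:extremal-reformulation}(ii). The only nontrivial point is the semigroup inequality \eqref{nonlinear-dual-optimal}. I would fix $s\ge0$, put $\varphi_0^s:=\underline{\varphi}(\cdot,s)$ (a nonnegative $BLSC(\R^d)$ function), and apply Theorem~\ref{thm:weighted} with $\varphi_0^s$ as initial weight to get
$$
\int_{\R^d}|u-v|(x,t)\,\underline{\varphi}(x,s)\dd x\le\int_{\R^d}|u_0-v_0|(x)\,\psi(x,t)\dd x,
$$
where $\psi$ is the minimal viscosity solution of \eqref{eq:dual-true} with data $\varphi_0^s$. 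It then suffices to show $\psi(\cdot,t)\le\underline{\varphi}(\cdot,t+s)$ and to use $|u_0-v_0|\ge0$. For this, the time shift $w(x,\tau):=\underline{\varphi}(x,\tau+s)$ is again a bounded viscosity solution of \eqref{eq:dual-true-1} (the equation is autonomous in $t$), it is lower semicontinuous on $\R^d\times\R^+$, and $w_\ast(\cdot,0)=w(\cdot,0)=\varphi_0^s=(\varphi_0^s)_\ast$; hence $w$ is a supersolution of the initial-value problem \eqref{eq:dual-true} in the sense of Definition~\ref{def:viscsoln}, and Proposition~\ref{prop:extremal-reformulation}(i) gives $\psi\le w_\ast=w$. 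This yields \eqref{nonlinear-dual-optimal} for $\underline{\varphi}$, so $\underline{\varphi}\in\mathscr{W}_{m,M,\varphi_0}$.

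For the reverse inequality, fix an arbitrary $\varphi\in\mathscr{W}_{m,M,\varphi_0}$. Since $\varphi$ satisfies \eqref{nonlinear-dual-optimal}, Theorem~\ref{thm:dual} tells us that $\varphi_\#$ is a viscosity supersolution of \eqref{eq:dual-true-1}. As $\varphi$ is lower semicontinuous, a short argument with the defining averages (for $y$ near $x$ and $r$ small, $B_r(y)$ lies in a neighbourhood of $x$ on which $\varphi(\cdot,t)>\varphi(x,t)-\veps$) shows $\varphi_\#\ge\varphi$ pointwise on $\R^d\times\R^+$; consequently $(\varphi_\#)_\ast(\cdot,0)\ge\varphi_\ast(\cdot,0)=\varphi(\cdot,0)\ge\varphi_0=(\varphi_0)_\ast$, so the bounded function $\varphi_\#$ is in fact a supersolution of the full problem \eqref{eq:dual-true}. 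Proposition~\ref{prop:extremal-reformulation}(i) then gives $\underline{\varphi}\le(\varphi_\#)_\ast\le\varphi_\#$ on $\R^d\times\R^+$, and applying the operation $(\cdot)_\#$ — which is monotone and, being built from spatial averages, depends only on the a.e.\ class, so that $(\varphi_\#)_\#=\varphi_\#$ — yields $(\underline{\varphi})_\#\le(\varphi_\#)_\#=\varphi_\#$. Taking the infimum over $\varphi\in\mathscr{W}_{m,M,\varphi_0}$ and combining with the trivial reverse bound from the first part finishes the proof.

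I expect the main obstacle to be not any single estimate but the careful bookkeeping of semicontinuous envelopes at the initial time: one must verify that both the shifted solution $w$ and the average $\varphi_\#$ genuinely qualify as supersolutions of the initial-value problem \eqref{eq:dual-true}, and not merely of the interior equation \eqref{eq:dual-true-1}, so that the minimality of $\underline{\varphi}$ from Proposition~\ref{prop:extremal-reformulation} can be invoked. The substantive content — that the weighted $L^1$ inequality \eqref{nonlinear-dual-optimal} forces $\varphi_\#$ to satisfy the HJB inequality — is already delivered by Theorem~\ref{thm:dual}, so beyond that the corollary is essentially the assembly sketched above.
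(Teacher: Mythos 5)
Your proof is correct and follows essentially the same route as the paper: Theorem \ref{thm:dual} to turn the weight property into the supersolution property of $\varphi_\#$, the bound $\varphi\leq(\varphi_\#)_\ast$ to check the initial condition, minimality of $\underline{\varphi}$ via Proposition \ref{prop:extremal-reformulation}\eqref{extremal-and-semisol}, and then passing to $(\underline{\varphi})_\#\leq\varphi_\#$ (your ``monotone plus idempotent'' step is exactly Lemma \ref{lem:starmult-ineq}\eqref{missed-label-bis}). Your detailed time-shift argument for membership of $\underline{\varphi}$ in $\mathscr{W}_{m,M,\varphi_0}$ is what the paper leaves implicit in its citation of Theorem \ref{thm:weighted} together with Remark \ref{rem:weighted}\eqref{weighted-supersol}.
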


\begin{remark}
\begin{enumerate}[\rm (a)]
\item 
Property \eqref{nonlinear-dual-optimal} is stronger than \eqref{nonlinear-dual} since it holds for any $s \geq 0$. This may be interpreted as a certain semigroup property.
\item Property \eqref{nonlinear-dual-optimal} is satisfied by most of the weights from the literature, as e.g. for
$$
\varphi \equiv 1, \quad \varphi(x,t)=\mathbf{1}_{|x-x_0|<R+C t} \quad \mbox{and} \quad \varphi(x,t)=\e^{C t} \e^{-\sqrt{1+|x|^2}},
$$ 
in respectively \eqref{intro-contraction}, \eqref{finite-propagation} and \eqref{standard}; see also the stability results from \cite{Kru70,BeCrPi84,ChDi01,WuZhYiLi01,ChPe03,EnJa14,Fripr,Pog18}.
\end{enumerate}
\end{remark}

The proofs of Theorem \ref{thm:dual} and Corollary \ref{cor:opt} are given in Section \ref{sec:proofs-bis}. 

\subsection{$L^\infty_\INT$, semigroup formulation, and a new form of duality} 
\label{sec:unif-int}
We now interpret our results in terms of semigroups. This will reflect some form of duality for nonlinear semigroups, which will reduce to standard duality in the linear case. We first need to make the functional framework precise. Recall that $L^1$ might seem natural for the dual semigroup which will correspond to the weights in \eqref{nonlinear-dual}, but it is too weak for HJB equations and we will precisely explain why $L^\infty_\INT \subset L^1$ is a better and very natural setting. This preliminary study has also its own interest in viscosity solution theory, and is written for HJB equations in the usual form \eqref{eq:dual}. 

\subsubsection*{Preliminaries: $C_b\cap L^\infty_\INT$ as a natural $L^1$ setting for \eqref{eq:dual}}

We first explain why the pure $L^1$ setting is too weak to develop a general well-posedness theory for \eqref{eq:dual}. Consider a solution of the eikonal type equation\footnote{
Equation \eqref{eikonal} is of the form \eqref{eq:dual} with $\mathcal{E}=\overline{Q}_1(0)$, $b(\xi)=\xi$, and $a\equiv0$.} 
\begin{equation}
\label{eikonal}
\partial_t \varphi=\sum_{i=1}^d |\partial_{x_i} \varphi|.
\end{equation}
Under which condition is it integrable?

\begin{proposition}[Necesssary and sufficient integrability condition]
Let $\varphi$ be the viscosity solution of \eqref{eikonal}  with  initial data  $\varphi_0\in C_b (\R^d)$.
We then have
$$
\left[\varphi(\cdot,t) \in L^1(\R^d) \quad \forall t \geq 0\right] \quad\Longleftrightarrow\quad  \left[\varphi_0^- \in L^1(\R^d) \mbox{ and } \varphi_0^+ \in L^\infty_\INT(\R^d) \right].
$$
\end{proposition}

\begin{proof}
 Since $
\varphi(x,t)=\sup_{\overline{Q}_{t}(x)} \varphi_0$ by Proposition \ref{representation-deterministic}, we conclude by  Lemma \ref{lem:criterion}.
\end{proof}

We continue by showing that the  $L^1$ topology is too weak to get the continuous dependence on  the initial data, even for solutions which remain integrable.
  
\begin{proposition}[Failure of the $L^1$ continuous dependence]
 For all  $n \geq 1$, let $\varphi_0^n(x):=(1-n|x|)^+$, and $\varphi_n$ be 
the solution of \eqref{eikonal}  with  initial data  $\varphi_0^n$. Then $\varphi_0^n \in C_b \cap L^\infty_\INT(\R^d)$ and
$$
\lim_{n \to \infty }\varphi_0^n= 0 \quad \mbox{in } L^1(\R^d),
$$
but
$$
\lim_{n \to \infty} \varphi_n(\cdot,t)=\mathbf{1}_{\overline{Q}_t}(\cdot)\neq 0 \quad \mbox{in } L^1(\R^d), \quad \forall t>0.
$$
\end{proposition}

\begin{proof}
Use again that $\varphi_n(x,t)=\sup_{\overline{Q}_{t}(x)} \varphi_0^n$.
\end{proof}

Interestingly  a similar  analysis  works also for purely diffusive HJB 
equations. Consider e.g. an equation in one space dimension\footnote{Equation \eqref{example-diffusion} is of the form \eqref{eq:dual} with $\mathcal{E}=[0,1]$, $b\equiv0$, and $a(\xi)=\xi$.}
\begin{equation}\label{example-diffusion}
\partial_t \varphi=\left(\partial_{xx}^2 \varphi\right)^+.
\end{equation} 
To have $L^1$ solutions, we need again that  $\varphi_0^+ \in L^\infty_\INT$.

\begin{proposition}[$L^\infty_\INT$ and nonlinear diffusions]\label{natural-second}
Let $\varphi_0 \in C_b(\R)$ be nonnegative and 
$\varphi$ be the viscosity solution of \eqref{example-diffusion} with $\varphi_0$ as initial data. 
Then, 
$$
\left[\varphi(\cdot,t) \in L^1(\R) \quad \forall t \geq 0\right] \quad\Longleftrightarrow\quad  \varphi_0 \in L^\infty_\INT(\R).
$$
\end{proposition}

See Section \ref{ref-title-precise-instability} for the proof. We  now  use  the lack of a fundamental solution  of  \eqref{example-diffusion}  to show that there is no continuous dependence on the initial data in $L^1$.    

\begin{proposition}[Blow-up everywhere]
\label{lem:blow-up}
For all $n\geq 1$, let $\varphi_n$ be the viscosity solution of \eqref{example-diffusion} with an approximate delta-function as initial data: 
\begin{equation}\label{kernel-blow-up}
\varphi_n(x,t=0)=n \rho (n x),
\end{equation}
where $0 \leq \rho \in C_c(\R)$ is nontrivial. Then 
$
\lim_{n \to \infty} \varphi_n(x,t)=\infty$, $\forall x \in \R, \forall t>0$.
\end{proposition}

See Section \ref{ref-title-precise-instability} for the proof. 

\begin{remark}
A  counterexample  to the $L^1$ continuous dependence for \eqref{example-diffusion} is then given by the sequence of solutions 
$$
\psi_n(x,t):=\varphi_n(x,t)/\sqrt{\|\varphi_n(\cdot,t_0)\|_{L^1}} \quad \mbox{for a fixed $t_0>0$,}
$$
since $\|\psi_n(t=0)\|_{L^1} \to 0$ while $\|\psi_n(\cdot,t)\|_{L^1} \geq \|\psi_n(\cdot,t_0)\|_{L^1} \to \infty$ for any $t \geq t_0$.
\end{remark}

\medskip

In view of the previous results, we now look for a Banach space $X  \subset L^1$ that is strong enough to get  well-posedness for  \eqref{eq:dual} in general. 
We are mainly interested in properly defining an associated semigroup;  see e.g. \cite{BeWi94,BeCrPa01} for a general presentation of nonlinear semigroups.

\begin{definition}
Let $E$ be a normed space. 
\begin{enumerate}[{\rm (a)}]
\item  A family of maps $G_t:E \to E$ parametrized by $t \geq 0$ is a semigroup on $E$ if
$$
\begin{cases}
\mbox{$G_{t=0}=\textup{id}$ (the identity), and} \\
\mbox{$G_{t+s}=G_t G_{s}$ (meaning the composition) for any $t,s \geq 0$.}\\
\end{cases}
$$
\item It is a semigroup of continuous operators if in addition $G_t:E \to E$ is continuous for each $t \geq 0$.
\item And it is strongly continuous  if 
for  each $\varphi_0 \in E$, $t \geq 0 \mapsto G_t \varphi_0 \in E$ is strongly continuous  (i.e. continuous in norm).
\end{enumerate}
\end{definition}

Let  $\varphi$ be the unique viscosity solution of \eqref{eq:dual} and define
\begin{equation}\label{semigroup-HJB}
  G_t: \varphi_0 \in C_b(\R^d) \mapsto \varphi(\cdot,t) \in C_b(\R^d).
\end{equation}
Then $G_t$ is a  semigroup of  Lipschitz continuous (in $C_b$) operators
 by Remark \ref{rem:max}.
 A natural construction is to define $X$ as the completion of some $E \subseteq C_b \cap L^1$, such that $X \subseteq L^1$ and $G_t$ can be extended from $E$ onto $X$. More precisely we  require that
\begin{equation}\label{coarsest-space}
\begin{cases}
\mbox{$E$ is a vector subspace of $C_b \cap L^1(\R^d)$,}\\
\mbox{$E$ is a normed space},\\
\mbox{$E$ is continuously embedded into $L^1(\R^d)$},
\end{cases}
\end{equation}
and for  any data  $(\mathcal{E},b,a)$ satisfying \eqref{bassumption},  
the semigroup \eqref{semigroup-HJB}  satisfies:
\begin{equation}\label{coarsest-semigroup}
\forall t \geq 0, \quad \begin{cases}
\mbox{$G_t(E) \subseteq X:=\overline{E}^{\|\cdot\|_E}$, $G_t:E \to X$ is continuous, and}\\
\mbox{$G_t$ admits an extension onto $X$ as a continuous operator.}
\end{cases}
\end{equation}  
Here $\overline{E}^{\|\cdot\|_E} \subseteq L^1(\R^d)$ is the completion, see Section \ref{sec:notation}.

The best $E$ is given below.

\begin{theorem}[A natural $L^1$ setting for  \eqref{eq:dual}]\label{thm:qualitative-HJB}
The space $C_{b} \cap L^\infty_{\INT}(\R^d)$ is a Banach space satisfying the properties \eqref{coarsest-space}--\eqref{coarsest-semigroup}. Moreover, any other space $E$ satisfying \eqref{coarsest-space}--\eqref{coarsest-semigroup} is continuously embedded into  $C_{b} \cap L^\infty_{\INT}(\R^d)$.
\end{theorem}

\begin{remark}
Since  the best $E=X$ is  a Banach space by Theorem \ref{thm:uniformly-integrable}, it is a posteriori not necessary to extend $G_t$ outside $C_b$.  The classical notion of viscosity solutions is then already satisfactory to study $L^1$ solutions of fully nonlinear degenerate PDEs.  
\end{remark}

Theorem \ref{thm:qualitative-HJB} relies on the following estimate:

\begin{theorem}[General $L^\infty_\INT$ stability]
\label{thm:quantitative-HJB-unif}
Assume \eqref{bassumption} and $T \geq 0$. For any bounded subsolution $\varphi$ and supersolution $\psi$ of \eqref{eq:dual-1}, 
\begin{equation}\label{esti:quantitative-HJB-unif}
\int \sup_{\overline{Q}_{1}(x) \times [0,T]} \left(\varphi^\ast-\psi_\ast\right)^+ \dd x 
\leq C \int \sup_{\overline{Q}_{1}(x)} \left(\varphi^\ast-\psi_\ast\right)^+(\cdot,0) \dd x,
\end{equation}
for some constant $C=C(d,\|a\|_\infty,\|b\|_\infty,T) \geq 0$.
\end{theorem}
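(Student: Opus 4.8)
The plan is to reduce the global $L^\infty_\INT$ estimate to a local sup-estimate on unit cubes, obtained by constructing suitable smooth supersolutions (barriers) of the equation $\partial_t\phi = H(D\phi,D^2\phi)$ and invoking the comparison principle (Theorem \ref{propofvissoln}). First I would set $w:=\varphi^\ast-\psi_\ast$; although $w$ is not itself a subsolution of a nice equation, the difference of a subsolution and a supersolution of the \emph{same} Hamiltonian satisfies, in the viscosity sense, $\partial_t w \le H(D\phi_1,D^2\phi_1)-H(D\phi_2,D^2\phi_2)$ at doubled test points, and since $H$ is Lipschitz (with constants controlled by $\|b\|_\infty,\|a\|_\infty$) and convex, one gets that $w$ is a viscosity subsolution of a linear equation $\partial_t w \le \|b\|_\infty |Dw| + \|a\|_\infty \,\Delta w$ in an appropriate doubled/relaxed sense — this is the standard step behind contraction proofs and I would either cite it or redo the doubling-of-variables briefly. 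Equivalently, $w^+$ is controlled by the solution of the linear HJB-type equation with Hamiltonian $\tilde H(p,X)=\|b\|_\infty|p|+\|a\|_\infty\tr(X^+)$ (or just $\|b\|_\infty|p|+\|a\|_\infty\|X\|$), which is itself of the form \eqref{eq:dual-1}.

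Next, to control $\sup_{\overline Q_1(x)\times[0,T]} w^+$ for a fixed $x$, I would build an explicit smooth supersolution of this linear equation dominating $w^+$ at $t=0$ on a slightly enlarged region. A good candidate is something like $\Phi(y,t) := M + \int \theta_{x}(z)\, g_t(y-z)\,dz$ where $M$ localizes the initial data near $\overline Q_{1+\veps}(x)$ and $g_t$ is a Gaussian-type kernel with variance $\sim \|a\|_\infty t$ and drift $\le \|b\|_\infty t$; more simply, one can use a product of one-dimensional barriers of the form $C\big(\text{something like }\cosh$ or polynomial$\big)$ times an exponential-in-time factor $e^{\lambda t}$ with $\lambda=\lambda(d,\|a\|_\infty,\|b\|_\infty)$. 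The key features needed are: (i) $\Phi(\cdot,0)\ge w^+(\cdot,0)$ on all of $\R^d$, which forces $\Phi(\cdot,0)$ to dominate roughly $\|w^+(\cdot,0)\|_{L^\infty(\overline Q_{1+\veps}(x))}$ on a neighborhood of $\overline Q_1(x)$ and to be large (but finite) away from it; (ii) $\partial_t\Phi \ge \tilde H(D\Phi,D^2\Phi)$ pointwise; (iii) $\sup_{\overline Q_1(x)\times[0,T]}\Phi \le C(d,\|a\|_\infty,\|b\|_\infty,T)\,\sup_{\overline Q_{1+\veps}(x)} w^+(\cdot,0)$. By the comparison principle, $w^+ \le \Phi$ on $\overline Q_1(x)\times[0,T]$, giving the pointwise-in-$x$ bound
\begin{equation*}
\sup_{\overline Q_1(x)\times[0,T]} w^+ \le C(d,\|a\|_\infty,\|b\|_\infty,T)\ \sup_{\overline Q_{1+\veps}(x)} w^+(\cdot,0).
\end{equation*}
Integrating this in $x$ over $\R^d$ and applying Lemma \ref{lem:criterion} with $r=1$ to pass from $\overline Q_{1+\veps}$ back to $\overline Q_1$ on the right-hand side yields exactly \eqref{esti:quantitative-HJB-unif}.

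The main obstacle I expect is the explicit barrier construction in item (i)–(iii) above: one needs a single smooth supersolution that is simultaneously (a) a genuine supersolution of the linear comparison equation for \emph{all} directions (so the diffusion part needs the Gaussian/cosh growth and the drift part a margin), (b) at time $0$ no smaller than the (merely $L^\infty$, possibly discontinuous) function $w^+(\cdot,0)$ everywhere on $\R^d$ — here one exploits that $w$ is bounded, so a constant $M=\|w^+(\cdot,0)\|_\infty$ plus a localized bump works, but one must check that the constant plus bump is still a supersolution — and (c) at time $T$ on $\overline Q_1(x)$ still bounded by a constant times the \emph{local} sup near $x$, which is the delicate quantitative point, since the tail contribution $M$ must be damped on $\overline Q_1(x)$ by the kernel's exponential decay over the unit distance. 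A clean way around juggling all three is a two-barrier argument: one barrier handling the near part ($w^+(\cdot,0)\mathbf 1_{\overline Q_{1+\veps}(x)}$, propagated by the heat-type kernel, giving the main term) and a second handling the far bounded tail, whose contribution on $\overline Q_1(x)\times[0,T]$ is $\le C e^{\lambda T}\|w^+(\cdot,0)\|_\infty e^{-c/(\|a\|_\infty T)}$ — and this is where one must be slightly careful to still bound it by the \emph{local} sup; alternatively, since we only need a finite constant $C$, one can simply absorb $\|w^+\|_\infty$ into $\sup_{\overline Q_{1+\veps}(x)}w^+(\cdot,0)$ after observing that $\int \sup_{\overline Q_{1+\veps}(x)}w^+(\cdot,0)\,dx$ already dominates $\|w^+(\cdot,0)\|_{L^1}$ and hence, via boundedness, controls the integral of the tail terms. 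The remaining steps (Lipschitz/convexity of $H$, doubling of variables, Fubini, Lemma \ref{lem:criterion}) are routine.
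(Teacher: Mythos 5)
There is a genuine gap at the heart of your ``main route,'' namely item (iii): the pointwise bound
\begin{equation*}
\sup_{\overline{Q}_1(x)\times[0,T]} \left(\varphi^\ast-\psi_\ast\right)^+ \;\leq\; C\left(d,\|a\|_\infty,\|b\|_\infty,T\right)\, \sup_{\overline{Q}_{1+\veps}(x)} \left(\varphi^\ast-\psi_\ast\right)^+(\cdot,0)
\end{equation*}
is simply false as soon as the equation has a diffusion part, because of infinite speed of propagation. Take $\mathcal{E}$ a singleton, $b=0$, $a=I$ (so the equation is the heat equation and its solutions are both sub- and supersolutions), and initial data $(\varphi_0-\psi_0)^+\geq 0$ supported far from $x$: the left-hand side is strictly positive for every $t>0$ while the right-hand side vanishes, so no constant $C$ depending only on $d,\|a\|_\infty,\|b\|_\infty,T$ can work. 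Any barrier $\Phi$ with $\Phi(\cdot,0)\geq w^+(\cdot,0)$ on all of $\R^d$ must feel the far-away data, which is exactly why your item (iii) cannot be arranged. Your two-barrier ``fix'' does not close this: the far-field contribution you estimate by $C\e^{\lambda T}\|w^+(\cdot,0)\|_\infty \e^{-c/(\|a\|_\infty T)}$ is \emph{uniform in $x$}, so its integral over $x\in\R^d$ is infinite, and ``absorbing $\|w^+\|_\infty$ into $\sup_{\overline{Q}_{1+\veps}(x)}w^+(\cdot,0)$'' is not possible pointwise. What would repair it is an $x$-dependent bound of the far part by a convolution $\int_{|x-y|\geq\veps}K_T(x-y)\,w^+(y,0)\dd y$ with an integrable (in fact time-sup integrable) kernel, followed by Fubini/Young against $\|w^+(\cdot,0)\|_{L^1}\leq\int\sup_{\overline{Q}_1(x)}w^+(\cdot,0)\dd x$ --- but at that point you have abandoned the pointwise-local strategy and are doing the paper's proof: there one compares $\varphi^\ast-\psi_\ast$ \emph{globally} with $\Psi\ast_x\sup_{\overline{Q}_1(\cdot)}(\varphi^\ast-\psi_\ast)^+(\cdot,0)$, where $\Psi(x,t)=U\big((|x|-1-L_bt)^+/\sqrt{L_at}\big)$ is an explicit self-similar supersolution of the model equation $\partial_t\varphi=L_b|D\varphi|+L_a\sup_{\lambda\in\textup{Sp}(D^2\varphi)}\lambda^+$ with $\sup_{t\in[0,T]}\Psi(\cdot,t)$ integrable, and then concludes by Fubini and Lemma \ref{lem:criterion}.

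Two further points. First, your reduction ``$\partial_t w\leq\|b\|_\infty|Dw|+\|a\|_\infty\Delta w$'' is incorrect: $\tr(aX)$ is not dominated by $\|a\|_\infty\tr(X)$ when $X$ has eigenvalues of both signs; you must use a one-sided operator as in your parenthetical variants, e.g. $\|b\|_\infty|p|+\|\tr(a)\|_\infty\sup_{\lambda\in\textup{Sp}(X)}\lambda^+$ (this is the paper's Lemma \ref{lem:subsolutionsum} plus the Ky Fan inequality \eqref{kyfan}). Second, if you do go the convolution route, beware that the time-sup of a Gaussian kernel is not integrable near the origin; the paper avoids this by convolving against the unit-cube sup of the data with a bounded profile having a plateau of radius $1+L_bt$, and in your splitting the near part must be handled by the maximum principle while only the far part (where the kernel's time-sup is integrable) is treated by convolution.
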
 

As a consequence we have the following result:

\begin{corollary}[$L^\infty_\INT$ well-posedness of \eqref{eq:dual}]\label{cor-nv}
Assume \eqref{bassumption} and $G_t$ is the solution semigroup defined in \eqref{semigroup-HJB}. Then its restriction to $C_b \cap L^\infty_\INT(\R^d)$ is a strongly continuous semigroup  of Lipschitz continuous operators.   
\end{corollary}

The proofs of Theorem \ref{thm:quantitative-HJB-unif} and Corollary \ref{cor-nv} are given in Section \ref{esp:ref-proof-bis-bis},  while Theorem \ref{thm:qualitative-HJB} is proved  in Section \ref{ref-title-precise-instability}.

\subsubsection*{A certain duality between nonlinear semigroups}

For each $t \geq 0$,  let 
$$
S_t:u_0 \in L^\infty(\R^d) \mapsto u(\cdot,t) \in L^\infty(\R^d)
$$ 
where $u$ is the entropy solution of \eqref{E}, and  let 
$$
G_t: \varphi_0 \in C_{b} \cap L^\infty_{\INT}(\R^d) \mapsto \varphi(\cdot,t) \in C_{b} \cap L^\infty_{\INT}(\R^d)
$$
where $\varphi$ is the viscosity solution of \eqref{eq:dual-true}.  Note that $G_t=G^{m,M}_t$ depends on the parameters $m$ and $M$ through  Equation \eqref{eq:dual-true-1}.  

\begin{corollary}[A form of duality]\label{cor:sg}
Assume \eqref{fassumption}, $m<M$, and  consider the semigroups $S_t$ and $G_t$  defined as above. Then $G_t$ is the smallest strongly continuous semigroup  of continuous  operators on $C_{b} \cap L^\infty_{\INT}(\R^d)$ satisfying
\begin{equation}\label{etoile-bis}
\int_{\R^d} |S_tu_0-S_tv_0| \varphi_0 \dd x \leq \int_{\R^d} |u_0-v_0| G_t \varphi_0 \dd x,
\end{equation}
for every $u_0$ and $v_0$ in $L^\infty(\R^d,[m,M])$, $0 \leq \varphi_0\in C_{b} \cap L^\infty_{\INT}(\R^d)$, and $t \geq 0$. 
\end{corollary}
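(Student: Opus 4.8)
The plan is to prove Corollary~\ref{cor:sg} by combining Theorem~\ref{thm:weighted} (which gives that $G_t$ \emph{satisfies} \eqref{etoile-bis}) with the optimality statement coming from Theorem~\ref{thm:dual} and Corollary~\ref{cor:opt} (which gives that $G_t$ is the \emph{smallest} such semigroup). First I would check that $G_t$ is itself a strongly continuous semigroup on $C_b\cap L^\infty_\INT(\R^d)$: the semigroup property is the uniqueness part of the viscosity theory (Theorem~\ref{exisuniqvissoln} together with the observation in Remark~\ref{rem:weighted}(a) that \eqref{eq:dual-true} is of the form \eqref{eq:dual}), the fact that $G_t$ preserves $C_b\cap L^\infty_\INT$ is part of Theorem~\ref{thm:qualitative-HJB}, and strong continuity is Remark~\ref{rem:notion}(a) (Lemma~\ref{lem:strong-cont}). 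Then applying Theorem~\ref{thm:weighted} with $\varphi_0\in C_b\cap L^\infty_\INT$ nonnegative, for which the solution is continuous so $\underline\varphi=\varphi=G_t\varphi_0$ and the right-hand side of \eqref{nonlinear-dual} is finite (Remark~\ref{rem:weighted}(b)), gives exactly \eqref{etoile-bis}.

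For minimality, suppose $\widetilde G_t$ is another strongly continuous semigroup on $C_b\cap L^\infty_\INT(\R^d)$ satisfying \eqref{etoile-bis}. Fix $0\le\varphi_0\in C_b\cap L^\infty_\INT$ and set $\varphi(x,t):=\widetilde G_t\varphi_0(x)$. The key step is to show $\varphi\in\mathscr W_{m,M,\varphi_0}$, i.e. that $\varphi$ (which is continuous, hence in $BLSC$, with $\varphi\ge 0$ by the maximum principle aspects and $\varphi(t=0)=\varphi_0$) satisfies property~\eqref{nonlinear-dual-optimal} of Theorem~\ref{thm:dual}. This is where the semigroup property of $\widetilde G_t$ is used: for $t,s\ge 0$, apply \eqref{etoile-bis} at time $t$ to the data $S_s u_0$ and $S_s v_0$ — which still take values in $[m,M]$ by the maximum principle for \eqref{E} — and to the weight $\varphi_0$, obtaining
\begin{equation*}
\int |S_{t}S_s u_0 - S_t S_s v_0|\,\varphi_0\dd x \le \int |S_s u_0 - S_s v_0|\, \widetilde G_t\varphi_0\dd x.
\end{equation*}
Since $S_{t}S_s=S_{t+s}$ and $\widetilde G_t\varphi_0=\varphi(\cdot,t)$, while $\widetilde G_{t+s}\varphi_0=\varphi(\cdot,t+s)$, one rewrites this, after also applying \eqref{etoile-bis} directly at $t+s$ or by a second use of the semigroup law, so that the chain of inequalities produces exactly
\begin{equation*}
\int |S_{t+s}u_0 - S_{t+s}v_0|\,\varphi_0\dd x \le \int |u_0-v_0|\,\varphi(\cdot,t+s)\dd x,
\end{equation*}
and more to the point the intermediate form
\begin{equation*}
\int |S_{t}(S_s u_0) - S_t(S_s v_0)|\,\varphi(\cdot,0)\dd x \le \int |S_s u_0 - S_s v_0|\,\varphi(\cdot,t)\dd x \le \int |u_0-v_0|\,\varphi(\cdot,t+s)\dd x,
\end{equation*}
which upon relabeling the data $S_s u_0\mapsto u_0$ etc.\ is precisely assertion~\eqref{nonlinear-dual-optimal} shifted, i.e.\ gives property~\eqref{nonlinear-dual-optimal} for $\varphi$. (The cleanest route is: \eqref{etoile-bis} for $\widetilde G$ at any time $\tau$, applied to data $u_0,v_0$ and weight $\varphi(\cdot,s)=\widetilde G_s\varphi_0$, reads $\int|S_\tau u_0-S_\tau v_0|\widetilde G_s\varphi_0\le\int|u_0-v_0|\widetilde G_\tau\widetilde G_s\varphi_0=\int|u_0-v_0|\widetilde G_{\tau+s}\varphi_0$, which is literally \eqref{nonlinear-dual-optimal} with $t\leftrightarrow\tau$ — so in fact \eqref{etoile-bis} plus the semigroup property of $\widetilde G$ gives $\varphi\in\mathscr W_{m,M,\varphi_0}$ immediately, provided \eqref{etoile-bis} is assumed with $\varphi_0$ replaced by the weights $\widetilde G_s\varphi_0\in C_b\cap L^\infty_\INT$, which it is.)

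Once $\varphi\in\mathscr W_{m,M,\varphi_0}$, Theorem~\ref{thm:dual}\eqref{item:dual} tells us $\varphi_\#$ is a viscosity supersolution of \eqref{eq:dual-true-1}, and since $\varphi$ is continuous, $\varphi_\#=\varphi$; by Proposition~\ref{prop:extremal-reformulation}\eqref{extremal-and-semisol} applied with initial data $\varphi_0$ (using $\varphi(\cdot,0)=\varphi_0$ continuous, so $(\varphi_0)_\ast=\varphi_0$), we get $\underline\varphi\le\varphi$, i.e.\ $G_t\varphi_0\le\widetilde G_t\varphi_0$ pointwise for every $t\ge 0$ and every nonnegative $\varphi_0\in C_b\cap L^\infty_\INT$; this is the sense in which $G_t$ is the smallest. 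I expect the main obstacle to be purely a matter of careful bookkeeping: making sure that \eqref{etoile-bis} is genuinely being used with admissible weights (the iterates $\widetilde G_s\varphi_0$ stay in $C_b\cap L^\infty_\INT$ and remain nonnegative), that the monotonicity input $0\le\varphi_0\Rightarrow 0\le\widetilde G_t\varphi_0$ is available (it follows from \eqref{etoile-bis} itself with $u_0\ne v_0$ chosen freely, or should be recorded as part of what ``semigroup satisfying \eqref{etoile-bis}'' delivers), and that the reduction $\varphi_\#=\varphi$ and the invocation of Corollary~\ref{cor:opt}/Theorem~\ref{thm:dual} are legitimate for continuous weights — no new analytic difficulty arises beyond what Theorems~\ref{thm:weighted} and~\ref{thm:dual} already provide.
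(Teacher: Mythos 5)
Your proposal is correct and follows essentially the same route as the paper: the ``cleanest route'' in your parenthetical (applying \eqref{etoile-bis} with the weights $\widetilde G_s\varphi_0$ and using $\widetilde G_\tau\widetilde G_s=\widetilde G_{\tau+s}$) is exactly how the paper shows that $(x,t)\mapsto H_t\varphi_0(x)$ lies in $\mathscr{W}_{m,M,\varphi_0}$, after which the paper simply cites Corollary \ref{cor:opt} and uses continuity to identify $(\,\cdot\,)_\#$ with the function itself. The only difference is cosmetic: you unfold Corollary \ref{cor:opt} via Theorem \ref{thm:dual}\eqref{item:dual} and Proposition \ref{prop:extremal-reformulation}\eqref{extremal-and-semisol}, and you additionally spell out the (implicit in the paper) verification that $G_t$ itself is a strongly continuous semigroup satisfying \eqref{etoile-bis} and that $H_t\varphi_0\ge 0$.
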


The proof of Corollary \ref{cor:sg} is given in Section \ref{sec:proofs-bis}. 

\begin{remark}
Here ``smallest'' means that any other semigroup $H_t$ satisfying the same properties is such that
$$
G_t\varphi_0\leq H_t\varphi_0 \quad \forall \varphi_0 \geq 0, 
{\forall t \geq 0.}
$$
\end{remark}

\begin{remark}\label{noq}
\begin{enumerate}[\rm (a)]
\item\label{item-referee} Inequality \eqref{etoile-bis} can be seen as a nonlinear dual inequality between $S_t$ and $G_t$, and $G_t$ as a dual semigroup of $S_t$ whose restriction over the cone $C_b \cap L^\infty_\INT(\R^d,\R^+)$ is  entirely determined by $S_t$ through \eqref{etoile-bis}.
\item\label{item-oq-referee} The question of duality in the other direction is
  open. 
Let us formulate it precisely. Consider $S_t$  and the whole family  $\{G_t^{m,M}:m<M\}$ defined just before
Corollary \ref{cor:sg}.
\begin{quote}
{\bf Open question.} \it Is $S_t$ the {\bf unique} weakly-$\star$ continuous semigroup on $L^\infty(\R^d)$ such that for all $m<M$, $G_t^{m,M}$ is the smallest strongly continuous semigroup of continuous operators on $C_{b} \cap L^\infty_{\INT}(\R^d)$ satisfying
\begin{equation}\label{etoile-bis-bis}
\int_{\R^d} |S_tu_0-S_tv_0| \varphi_0 \dd x \leq \int_{\R^d} |u_0-v_0| G^{\scriptscriptstyle m,M}_t \varphi_0 \dd x,
\end{equation}
for all $u_0$ and $v_0$ in $L^\infty(\R^d,[m,M])$, $0
\leq\varphi_0 \in C_{b} \cap L^\infty_{\INT}(\R^d)$, and $t \geq
0$?
\end{quote}
A positive answer would mean that $S_t$ is conversely entirely determined by the family $\{G_t^{m,M}:m<M\}$ 
through \eqref{etoile-bis-bis}.

\item Following part \eqref{item-referee}, we might be tempted to define a notion of dual for more general nonlinear semigroups. It is not our aim to explore such a direction, but note however that it would make sense only if
\begin{itemize}
\item[(i)] we have a reciprocal duality as discussed in part \eqref{item-oq-referee}, and
\item[(ii)] we can recover standard duality notions in the linear case.
\end{itemize} 
We can say more about (ii), and in 
\ref{app-referee} we give a sample result for slightly more abstract semigroups, for which we would not a priori know the associated equations.
\end{enumerate}
\end{remark}

\section{Proofs}\label{sec:proofs}
This section is devoted to the proofs of the results of Section \ref{main-results}. We will prove them in a certain order to arrive at Corollaries \ref{cor:opt} and \ref{cor:sg}, thus concluding by the optimality of the weight and the interpretation in terms of dual nonlinear semigroup. The proofs of Propositions 
\ref{natural-second} and \ref{lem:blow-up} and Theorem \ref{thm:qualitative-HJB} are  independent of this developement and given  at the end of the section. 

\subsection{More on viscosity solutions of \eqref{eq:dual}}

We need further classical results that can be found in \cite{CrIsLi92,FlSo93,Bar94,BaCa-Do97}. 

\begin{proposition}[Stability  w.r.t.  sup]\label{prop:ssi}
Assume \eqref{bassumption} and $\mathcal{F} \neq \emptyset$ is a uniformly locally  bounded family of viscosity subsolutions of \eqref{eq:dual-1}. Then, the function 
$$
(x,t) \mapsto \sup \{ \varphi(x,t):\varphi \in \mathcal{F} \}
$$ 
is a viscosity subsolution of \eqref{eq:dual-1}.
\end{proposition}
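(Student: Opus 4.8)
The statement is the classical fact that the supremum of a family of subsolutions is again a subsolution (and the symmetric statement for infimum of supersolutions). The obvious first reduction is that it suffices to handle the subsolution case; the supersolution case follows by replacing $\varphi$ by $-\varphi$ and noting that $H$, being a sup of affine functions, satisfies $-H(p,X)\geq H(-p,-X)$ — actually one has to be a bit careful here since $H$ is not odd, so I would instead argue directly by symmetry of Definition \ref{def:viscsoln}, or simply note that the infimum case is proved by the same argument with all inequalities and envelopes reversed. So I focus on: if $\mathcal F$ is a uniformly locally bounded family of subsolutions of \eqref{eq:dual-1}, then $w:=\sup_{\varphi\in\mathcal F}\varphi$ is a subsolution.

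\textbf{Key steps.} First, $w$ is locally bounded by the uniform local boundedness hypothesis, so $w^\ast$ is well-defined and finite. The heart of the matter is a standard argument: let $\phi\in C^\infty(\R^d\times\R^+)$ and let $(x_0,t_0)\in\R^d\times(0,\infty)$ be a local maximum of $w^\ast-\phi$; we must show $\partial_t\phi(x_0,t_0)\leq H(D\phi(x_0,t_0),D^2\phi(x_0,t_0))$. We may assume the maximum is strict (subtract a smooth function vanishing to second order at $(x_0,t_0)$, e.g. $|x-x_0|^4+|t-t_0|^4$, which does not change the values of the derivatives of $\phi$ at $(x_0,t_0)$) and global on a small closed ball $\overline B$ around $(x_0,t_0)$ contained in $\R^d\times(0,\infty)$. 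By definition of $w^\ast$ as the upper semicontinuous envelope of $w$, there is a sequence $(y_n,s_n)\to(x_0,t_0)$ with $w(y_n,s_n)\to w^\ast(x_0,t_0)$, and by definition of $w$ as a pointwise supremum, for each $n$ there is $\varphi_n\in\mathcal F$ with $\varphi_n(y_n,s_n)\geq w(y_n,s_n)-1/n$, hence $\varphi_n(y_n,s_n)\to w^\ast(x_0,t_0)$. Now consider, for each $n$, a point $(x_n,t_n)\in\overline B$ at which $\varphi_n^\ast-\phi$ attains its maximum over $\overline B$. Using $\varphi_n^\ast(x_n,t_n)-\phi(x_n,t_n)\geq \varphi_n(y_n,s_n)-\phi(y_n,s_n)$ together with $\varphi_n\leq w\leq w^\ast$ and the strict-maximum property of $(x_0,t_0)$, a routine argument shows $(x_n,t_n)\to(x_0,t_0)$, $(x_n,t_n)\in\INT\overline B$ for large $n$ (so it is an interior, hence local, maximum), and $\varphi_n^\ast(x_n,t_n)\to w^\ast(x_0,t_0)$. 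Applying the subsolution property of $\varphi_n$ at $(x_n,t_n)$ gives
\begin{equation*}
\partial_t\phi(x_n,t_n)\leq H\left(D\phi(x_n,t_n),D^2\phi(x_n,t_n)\right),
\end{equation*}
and letting $n\to\infty$, using continuity of $\phi$ and its derivatives and continuity of $H$ (it is a Lipschitz, hence continuous, function on $\R^d\times\mathbb S_d$ since $b,\sigma^a$ are bounded), yields $\partial_t\phi(x_0,t_0)\leq H(D\phi(x_0,t_0),D^2\phi(x_0,t_0))$, as desired.

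\textbf{Main obstacle.} The only genuinely delicate point is the compactness/convergence step: extracting points $(x_n,t_n)$ of near-maximum of the perturbed test function for the varying subsolutions $\varphi_n$ and showing they converge to $(x_0,t_0)$ while the function values $\varphi_n^\ast(x_n,t_n)$ converge up to $w^\ast(x_0,t_0)$. This is the classical "maximum points of perturbed test functions converge" lemma, and it works precisely because we arranged the maximum of $w^\ast-\phi$ to be strict and we have the sandwich $\varphi_n\leq w^\ast$ together with a sequence realizing $w^\ast(x_0,t_0)$ via the $\varphi_n$. Everything else — the reduction to strict maxima, the interior property, the passage to the limit in the inequality — is routine. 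The infimum-of-supersolutions statement is obtained verbatim with the roles of $\sup/\inf$, $w^\ast/w_\ast$, maxima/minima, and the inequalities all interchanged.
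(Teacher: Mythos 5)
Your argument is correct: it is the classical strict-maximum/perturbed-test-function proof given in the references \cite{CrIsLi92,Bar94,BaCa-Do97}, which is exactly what the paper relies on here (it cites this proposition as classical and does not reprove it). The only points worth keeping explicit in a full write-up are the ones you already flag: the subsolution inequality must be applied with the perturbed test function (whose derivatives at $(x_n,t_n)$ converge to those of $\phi$ at $(x_0,t_0)$ since the quartic perturbation vanishes to second order there), and the supersolution case is handled by the mirror argument with lower envelopes and minima rather than by a $\varphi \mapsto -\varphi$ reduction, which indeed fails because $H$ is convex, not odd.
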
 

The next results concern relaxed limits; cf. \eqref{relaxed-referee}.

\begin{proposition}[Stability  w.r.t.  relaxed limits]\label{prop:rsl}
Assume \eqref{bassumption} and  let  $(\varphi_\veps)_{\veps>0}$  be  a family of uniformly  locally bounded viscosity subsolutions (resp. supersolutions) of \eqref{eq:dual-1}. Then $\wlimsup \varphi_\veps$ (resp. $\wliminf \varphi_\veps$)
is a subsolution of \eqref{eq:dual-1} (resp. supersolution).
\end{proposition}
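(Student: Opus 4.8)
The statement to prove is Proposition~\ref{prop:rsl} (stability with respect to relaxed limits). Here is my plan.

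\medskip

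The plan is to treat only the subsolution case, since the supersolution case follows by replacing $\varphi_\veps$ with $-\varphi_\veps$ and $H$ with the Hamiltonian of the ``$\inf$'' problem (which is again of the form covered by \eqref{bassumption} up to sign conventions, or one simply notes $\wliminf \varphi_\veps = -\wlimsup(-\varphi_\veps)$ and redoes the argument with inequalities reversed). So set $\overline{\varphi} := \wlimsup \varphi_\veps$. First I would record the standard fact that $\overline{\varphi}$ is upper semicontinuous and locally bounded (using the uniform local bound on the family), so that $\overline{\varphi}^\ast = \overline{\varphi}$ and it is a legitimate candidate subsolution in the sense of Definition~\ref{def:viscsoln}\eqref{rem-vse}. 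Then the core is a test-function argument: fix $\phi \in C^\infty(\R^d \times \R^+)$ and a local maximum point $(x_0,t_0) \in \R^d \times (0,\infty)$ of $\overline{\varphi} - \phi$; I must show $\partial_t \phi(x_0,t_0) \le H(D\phi(x_0,t_0), D^2\phi(x_0,t_0))$.

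\medskip

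The key steps, in order: (1) Without loss of generality make the maximum strict by adding a penalization like $|x-x_0|^4 + (t-t_0)^2$ to $\phi$ (this changes neither $D\phi$, $D^2\phi$, nor $\partial_t\phi$ at $(x_0,t_0)$), and localize to a compact neighborhood $\overline{B}$ of $(x_0,t_0)$ on which the strict max holds. (2) Use the definition of $\wlimsup$ to extract a sequence $\veps_n \to 0^+$ and points $(y_n,s_n) \to (x_0,t_0)$ with $\varphi_{\veps_n}(y_n,s_n) \to \overline{\varphi}(x_0,t_0)$. (3) Let $(x_n,t_n)$ be a maximum point of $\varphi_{\veps_n}^\ast - \phi$ over $\overline{B}$; by a standard compactness/upper-semicontinuity argument (the ``half-relaxed limits'' lemma — see \cite{CrIsLi92,Bar94,BaCa-Do97}) one gets $(x_n,t_n) \to (x_0,t_0)$ and, for $n$ large, $(x_n,t_n)$ lies in the interior of $\overline{B}$, hence is a genuine local maximum of $\varphi_{\veps_n}^\ast - \phi$. (4) Apply the subsolution inequality for $\varphi_{\veps_n}$ at $(x_n,t_n)$:
\begin{equation*}
\partial_t \phi(x_n,t_n) \le H\big(D\phi(x_n,t_n), D^2\phi(x_n,t_n)\big).
\end{equation*}
(5) Pass to the limit $n \to \infty$: the left side converges to $\partial_t\phi(x_0,t_0)$ by smoothness of $\phi$, and for the right side I would use that $H$ is continuous (indeed $H$ in \eqref{Hamiltonian} is a sup of affine functions with uniformly bounded slopes by \eqref{bassumption}, hence Lipschitz on bounded sets, so in particular continuous), giving $H(D\phi(x_n,t_n),D^2\phi(x_n,t_n)) \to H(D\phi(x_0,t_0),D^2\phi(x_0,t_0))$. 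This yields the desired inequality. (6) Finally check the initial condition: for every $x$, $\overline{\varphi}(x,0) \le (\varphi_0)^\ast(x)$ — but here one must be slightly careful about what initial data the $\varphi_\veps$ carry; since the statement only claims $\wlimsup\varphi_\veps$ is a subsolution of \eqref{eq:dual-1} (the equation, not the full initial-value problem \eqref{eq:dual}), as the preceding Remark clarifies, I can simply omit the initial-data verification.

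\medskip

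The main obstacle I anticipate is step (3): making rigorous that the maxima $(x_n,t_n)$ of the perturbed functions over the compact set converge to $(x_0,t_0)$ and eventually become \emph{interior} (local) maxima. This is the classical ``half-relaxed limits'' argument and the one place where upper semicontinuity of $\overline{\varphi}$, the strictness of the maximum, and the convergence $\varphi_{\veps_n}(y_n,s_n) \to \overline{\varphi}(x_0,t_0)$ all get combined: one shows $\limsup_n [\varphi_{\veps_n}^\ast(x_n,t_n) - \phi(x_n,t_n)] \ge \overline{\varphi}(x_0,t_0) - \phi(x_0,t_0) \ge \limsup_n[\varphi_{\veps_n}^\ast(x_n,t_n)-\phi(x_n,t_n)]$ forcing, by strictness, $(x_n,t_n) \to (x_0,t_0)$; interiority then follows since $(x_0,t_0)$ is interior to $\overline{B}$. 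Everything else is routine. Since this is entirely standard, I would keep the proof short, state the half-relaxed-limits step with a reference to \cite{CrIsLi92,Bar94,BaCa-Do97}, and emphasize only the continuity of $H$ (which is specific to our class of Hamiltonians) as the ingredient that makes the limit pass.
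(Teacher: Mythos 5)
Your proof is correct: it is precisely the classical Barles--Perthame half-relaxed-limits argument (strict-max penalization, extraction of near-maximizers, convergence of the maximum points, passage to the limit in the viscosity inequality using continuity of $H$), and the paper itself gives no proof of Proposition~\ref{prop:rsl}, citing it as a standard result from \cite{CrIsLi92,FlSo93,Bar94,BaCa-Do97} — which contain exactly the argument you outline. Your observations that only the PDE (not the initial condition) is at stake and that the supersolution case follows by symmetry of the argument are also consistent with how the paper uses the result.
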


\begin{remark}
The notion of solution  (or semisolution)  is thus stable under local uniform convergence (equivalent to $\wlimsup \varphi_\veps=\wliminf \varphi_\veps$).
\end{remark}

\begin{proposition}[Limiting initial data]\label{prop:rsl-bis}
Assume \eqref{bassumption} and $(\varphi_\veps)_{\veps>0}$ is a 
uniformly locally bounded 
family of viscosity subsolutions (resp. supersolutions) of \eqref{eq:dual-1}. Then $\wlimsup \varphi_\veps$ (resp. $\wliminf \varphi_\veps$) satisfies
\begin{equation*}
\wlimsup \varphi_\veps (x,0) = \wlimsup \big[(\varphi_\veps)^\ast(\cdot,0) \big](x) \quad \forall x \in \R^d 
\end{equation*} 
(resp. $\wliminf \varphi_\veps (x,0) = \wliminf \big[(\varphi_\veps)_\ast(\cdot,0) \big](x)$).
\end{proposition}

\begin{remark}\label{limt0}
For subsolutions this means that
$$
\limsup_{\substack{\R^d \times \R^+ \ni (y,s)\to(x,0) \\ \veps \to 0^+}} \varphi_\veps(y,s) = \limsup_{\substack{\R^d \ni y\to x \\ \veps \to 0^+}} (\varphi_\veps)^\ast(y,0),
$$
where $(\varphi_\varepsilon)^\ast$ is the upper semicontinuous envelope computed in $(x,t)$.
The proof can be found in \cite{BaPe88} and 
\cite[Theorem 4.7]{Bar94}. The idea is to first consider 
$\varphi
:=
\wlimsup \varphi_\veps$, 
$\varphi_0(x)
:= 
\wlimsup \big[(\varphi_\veps)^\ast(\cdot,0)\big](x)$, and show that $\min \{\partial_t \varphi-H(D \varphi,D^2 \varphi),\varphi-\varphi_0\} \leq 0$  at $t=0$  in the viscosity sense. Fix  then some $x$ and use the viscosity inequalities at a max $(\overline{y},\overline{t})$ of the function $\varphi(y,t)-|y-x|^2/
\tilde{\varepsilon}
-C t$ with $C$ large enough such that $\overline{t}=0$. We get $\varphi(x,0) \leq \varphi_0(\overline{y})$ and conclude as $\tilde{\varepsilon} \to 0^+$.
\end{remark}

Here is  the stability for minimal  solutions, see 
\ref{app:min-max} for the proof.

\begin{proposition}[Stability of  minimal  solutions]
\label{stability-extremal}
Assume \eqref{bassumption} and $(\varphi_0^n)_n$ is a non\-decreasing uniformly globally bounded sequence. If $\underline{\varphi}_n$ is the  minimal solution of \eqref{eq:dual} with $\varphi_0^n$ as initial data, then 
$
\sup_n \underline{\varphi}_n 
$
is the minimal solution of \eqref{eq:dual} with initial data  $
\sup_n (\varphi_0^n)_\ast.
$
\end{proposition}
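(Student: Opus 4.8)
The plan is to treat the nondecreasing case; the nonincreasing one follows by the substitution $\varphi \mapsto -\varphi$ (which swaps min/max solutions and sub/supersolutions). Write $\Phi := \sup_n \underline{\varphi}_n$. First I would note that the sequence $(\underline{\varphi}_n)_n$ is nondecreasing: indeed $\varphi_0^n \leq \varphi_0^{n+1}$ implies $\underline{\varphi}_n \leq \underline{\varphi}_{n+1}$ by the comparison statement for extremal solutions, Proposition \ref{prop:extremal-reformulation}\eqref{comp-extremal}. Together with the global bound $\inf_m \inf \varphi_0^m \leq \underline{\varphi}_n \leq \sup_m \sup \varphi_0^m$ (maximum principle, Remark \ref{rem:max}, extended to extremal solutions), this shows $\Phi$ is a well-defined, locally bounded function and in fact $\Phi = \wlimsup \underline{\varphi}_n = \wliminf \underline{\varphi}_n$ is the monotone (hence locally uniform, by Dini-type reasoning together with the semicontinuity) limit — but more robustly I would just invoke the relaxed-limit machinery directly.

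The core of the argument has three parts. (i) \emph{$\Phi$ is a subsolution of \eqref{eq:dual-1}.} Since each $\underline{\varphi}_n$ is a subsolution and the family is uniformly locally bounded, $\wlimsup \underline{\varphi}_n$ is a subsolution by Proposition \ref{prop:rsl}; and $\wlimsup \underline{\varphi}_n = \Phi^\ast$ because the sequence is nondecreasing, so $\Phi$ is a subsolution. (ii) \emph{$\Phi$ is a supersolution of \eqref{eq:dual-1}.} Here $\wliminf \underline{\varphi}_n$ is a supersolution of \eqref{eq:dual-1} by Proposition \ref{prop:rsl}, and since the sequence is monotone one checks $\wliminf \underline{\varphi}_n = \Phi_\ast$, giving the supersolution property. (iii) \emph{Identification of the initial trace.} By Theorem \ref{thm:propextremalsoln1}, $\underline{\varphi}_n(\cdot,0) = (\varphi_0^n)_\ast$; I must show $\Phi^\ast(x,0) \leq (\varphi_0)^\ast(x)$ and $\Phi_\ast(x,0) \geq (\varphi_0)_\ast(x)$ with $\varphi_0 = \sup_n (\varphi_0^n)_\ast$. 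The lower bound is immediate since $\Phi(x,0) \geq \underline{\varphi}_n(x,0) = (\varphi_0^n)_\ast(x)$ for all $n$, hence $\Phi_\ast(x,0) \geq \sup_n (\varphi_0^n)_\ast(x) = \varphi_0(x) \geq (\varphi_0)_\ast(x)$. The upper bound requires showing $\limsup_{(y,s)\to(x,0^+)} \Phi(y,s) \leq (\varphi_0)^\ast(x)$; the natural route is a barrier argument: for fixed $x$ and $\delta>0$, pick a smooth supersolution $\psi$ of \eqref{eq:dual-1} with $\psi(\cdot,0) \geq \varphi_0$ near $x$ and $\psi(x,0) \leq (\varphi_0)^\ast(x)+\delta$ (built from $\sup \varphi_0^m$ plus a quadratic penalization, using $\|b\|_\infty,\|a\|_\infty$), then by comparison (Theorem \ref{propofvissoln}) each $\underline{\varphi}_n \leq \psi$, hence $\Phi \leq \psi$, hence $\Phi^\ast(x,0) \leq \psi(x,0) \leq (\varphi_0)^\ast(x)+\delta$.

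Once (i)–(iii) are in place, $\Phi$ is a viscosity solution of \eqref{eq:dual} with initial data whose lower semicontinuous envelope is $(\varphi_0)_\ast$, so by the uniqueness part of Theorem \ref{thm:propextremalsoln1} it remains to see $\Phi$ is \emph{the minimal} such solution. Given any bounded viscosity solution $\varphi$ of \eqref{eq:dual} with this initial data, $\varphi$ is in particular a supersolution with $\varphi_\ast(\cdot,0) \geq (\varphi_0)_\ast \geq (\varphi_0^n)_\ast$, so $\underline{\varphi}_n \leq \varphi_\ast$ by Proposition \ref{prop:extremal-reformulation}\eqref{extremal-and-semisol} for every $n$, whence $\Phi \leq \varphi_\ast \leq \varphi$; this is exactly minimality. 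The main obstacle I anticipate is part (iii), the initial-trace identification — specifically constructing the correct smooth supersolution barrier uniformly in $n$ and checking the envelope computations $\wlimsup \underline{\varphi}_n = \Phi^\ast$, $\wliminf \underline{\varphi}_n = \Phi_\ast$ carefully, since these rely on the monotonicity of the sequence and on the semicontinuity properties ($\underline{\varphi}_n$ lower semicontinuous) rather than on any continuity one would have in the classical setting.
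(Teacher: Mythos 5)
Your step (iii) is where the argument breaks as written. To compare each $\underline{\varphi}_n$ with the barrier $\psi$ through Theorem \ref{propofvissoln} you must order the envelopes at $t=0$ \emph{globally}: you need $(\underline{\varphi}_n)^\ast(\cdot,0)\le \psi(\cdot,0)$ on all of $\R^d$, and the only a priori control on $(\underline{\varphi}_n)^\ast(\cdot,0)$ coming from Definition \ref{def:viscsoln} is the bound by $(\varphi_0^n)^\ast$ — not by $\varphi_0=\sup_m(\varphi_0^m)_\ast$, nor by $(\varphi_0)^\ast$. These can genuinely differ: for the (trivially nondecreasing) constant sequence $\varphi_0^n=\mathbf{1}_{\{0\}}$ one has $(\varphi_0^n)^\ast(0)=1$ while $\varphi_0\equiv 0$ and $(\varphi_0)^\ast(0)=0$, so any barrier admissible for Theorem \ref{propofvissoln} must have $\psi(0,0)\ge 1$ and cannot satisfy $\psi(0,0)\le(\varphi_0)^\ast(0)+\delta$. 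Hence ``$\psi(\cdot,0)\ge\varphi_0$ near $x$'' is neither sufficient for the comparison principle (which is global) nor strengthenable to what that principle actually requires without destroying the smallness at $x$. The repair is to route the comparison through \emph{minimality} instead: Proposition \ref{prop:extremal-reformulation}\eqref{extremal-and-semisol} gives $\underline{\varphi}_n\le\psi_\ast$ for any bounded supersolution $\psi$ of the Cauchy problem with data $\varphi_0^n$, and the initial requirement there involves only $(\varphi_0^n)_\ast\le\varphi_0$; a smooth global barrier with $\psi(\cdot,0)\ge\varphi_0$ everywhere and $\psi(x,0)\le(\varphi_0)^\ast(x)+\delta$ then does what you want.

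You should also know that the paper avoids (i) and (iii) altogether, and its shortcut is exactly the two pieces of your plan that are sound. Once your part (ii) is in place — $\Phi=\sup_n\underline{\varphi}_n=\wliminf\underline{\varphi}_n$ is a supersolution of \eqref{eq:dual-1}, with initial value identified as $\sup_n(\varphi_0^n)_\ast=\varphi_0$ via Proposition \ref{prop:rsl-bis} (or via lower semicontinuity of $\Phi$, which is also what your ``immediate'' lower bound secretly uses) — Proposition \ref{prop:extremal-reformulation}\eqref{extremal-and-semisol} gives $\underline{\varphi}\le\Phi$ for the minimal solution $\underline{\varphi}$ with data $\varphi_0$. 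Conversely $\underline{\varphi}$ is a supersolution of each problem with data $\varphi_0^n$ (since $\underline{\varphi}(\cdot,0)=\varphi_0\ge(\varphi_0^n)_\ast$), so $\underline{\varphi}_n\le\underline{\varphi}$ for every $n$ and $\Phi\le\underline{\varphi}$. The identity $\Phi=\underline{\varphi}$ then transfers the subsolution property, the initial-trace inequalities and minimality to $\Phi$ for free; nothing in your parts (i) and (iii) needs to be verified directly. (Minor point: in your opening paragraph the assertion $\Phi=\wlimsup\underline{\varphi}_n$ is not correct — monotonicity gives $\wlimsup\underline{\varphi}_n=\Phi^\ast$ and $\wliminf\underline{\varphi}_n=\Phi$ — but you state the right identities later where they are used.)
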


Let us continue with regularization procedures.  Usually we consider inf and supconvolutions, but for convex  Hamiltonians  we can use the classical convolution for supersolutions, see \cite{BaJa02,BaJa07} (the ideas were introduced in \cite{Kry97}).

\begin{lemma}
\label{lem:convolution}
Assume \eqref{bassumption}, $\varphi \in BLSC(\R^d \times (0,\infty))$ is a supersolution of \eqref{eq:dual-1}, and $0 \leq f \in L^1(\R^d \times (-\infty,0))$. Then $\varphi \ast_{x,t} f$ is a supersolution of \eqref{eq:dual-1}.
\end{lemma}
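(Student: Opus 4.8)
The plan is to use the convexity of the Hamiltonian $H$ together with Jensen's inequality, in the spirit of \cite{BaJa02,BaJa07,Kry97}. The key observation is that $H(p,X) = \sup_{\xi\in\mathcal{E}}\{b(\xi)\cdot p + \tr(a(\xi)X)\}$ is convex and positively homogeneous of degree one in $(p,X)$, being a supremum of linear functionals. Since $f \geq 0$ with $\|f\|_{L^1} $ possibly not equal to $1$, I would first reduce (by homogeneity of $H$, or by normalizing) to the case $\int f = 1$, so that $\varphi \ast_{x,t} f$ is a genuine average of translates of $\varphi$.

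First I would record the elementary translation invariance of the equation: if $\varphi$ is a supersolution of \eqref{eq:dual-1}, then for each fixed $(y,s) \in \R^d \times \R^-$ the translate $\varphi(\cdot - y, \cdot - s)$ (extended suitably, or restricted to $\R^d \times \R^+$ after the shift) is again a supersolution of the same equation, because the coefficients $b,a$ do not depend on $(x,t)$; note that shifting backward in time by $s \le 0$ only enlarges the time interval, so no boundary difficulty arises. Next, I would invoke Proposition \ref{prop:rsl} (stability under relaxed limits) or rather a direct test-function computation: let $\phi$ be a smooth test function touching $(\varphi\ast_{x,t}f)_\ast$ from below at an interior point $(x_0,t_0)$. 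Writing $\psi := \varphi \ast_{x,t} f$ and using that $\psi(x,t) = \int \varphi(x-y,t-s)\,f(y,s)\dd y\dd s$, one shows that for a.e. $(y,s)$ in the support of $f$, the function $\phi(\cdot + y, \cdot + s) - (\text{correction})$ is an admissible test function for $\varphi$ at the corresponding point, yielding the viscosity supersolution inequality $\partial_t\phi(x_0,t_0) \geq$ an average of $H(D\phi,D^2\phi)$-type expressions evaluated along the convolution. Then Jensen's inequality, applied to the convex map $(p,X)\mapsto -H(p,X)$ (equivalently, concavity-free since $H$ convex and we want a lower bound), upgrades this averaged inequality to $\partial_t\phi(x_0,t_0) \geq H(D\phi(x_0,t_0),D^2\phi(x_0,t_0))$, which is exactly the supersolution property for $\psi$.

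The main obstacle I anticipate is making the test-function/inf-convolution argument rigorous at the level of \emph{semicontinuous} supersolutions rather than smooth ones: one cannot simply differentiate under the integral sign, and one must handle the interplay between the convolution in $(x,t)$ and the lower semicontinuous envelope carefully (which is why the hypothesis $\varphi \in BLSC$ and $f$ supported in $\R^d\times\R^-$ are imposed — the backward-in-time convolution preserves the initial-time constraint and the relevant semicontinuity). A clean way to circumvent this is: (i) for each fixed $(y,s)$, $\varphi(\cdot-y,\cdot-s)$ is a supersolution; (ii) a pointwise infimum over a parameter of supersolutions is a supersolution by Proposition \ref{prop:ssi}, but here we need an \emph{integral} average, not an infimum — so instead I would discretize the integral as a finite convex combination $\sum_k \lambda_k \varphi(\cdot - y_k, \cdot - s_k)$, show such finite convex combinations are supersolutions using convexity of $H$ and a direct jet/test-function computation (finitely many terms makes the test-function manipulation routine), and then pass to the limit using Proposition \ref{prop:rsl} as the discretization is refined, since the Riemann sums converge locally uniformly to $\varphi \ast_{x,t} f$ when $\varphi$ is, say, first replaced by its inf-convolution to gain uniform continuity and then the limit $\veps\downarrow 0$ is taken. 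I would present the finite-convex-combination step as the heart of the proof and relegate the standard approximation details to a remark or reference \cite{BaJa02,BaJa07}.
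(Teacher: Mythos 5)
The paper does not actually prove this lemma: it is quoted from \cite{BaJa02,BaJa07} (with the ideas going back to \cite{Kry97}), and your plan is indeed a reconstruction of that standard argument — normalize $f$, use translation invariance in $(x,t)$ (your observation that the backward-in-time support of $f$ keeps $t-s\geq t>0$ is correct), reduce to finite convex combinations of translates, and pass to the limit by stability. The reduction itself, and the final limit $\veps\downarrow 0$ (monotone limits of lsc supersolutions handled as in the proof of Theorem \ref{thm:propextremalsoln1} via relaxed limits), are fine in spirit, up to the minor point that for $f\in L^1$ one should discretize the measure $f\,\dd y\,\dd s$ weakly-$\star$ rather than take literal Riemann sums.

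The genuine gap is the step you declare routine: ``finite convex combinations of supersolutions are supersolutions by a direct jet/test-function computation.'' This fails already for two terms, and the number of terms is irrelevant. If $\phi$ is smooth and $\lambda\varphi_1+(1-\lambda)\varphi_2-\phi$ has a local minimum at $(x_0,t_0)$, you get no information about subjets of $\varphi_1$ or $\varphi_2$ separately at that point, so there is nothing to which convexity of $H$ (or Jensen's inequality) can be applied; the same obstruction is why you cannot ``evaluate along the convolution'' in your first paragraph. The standard remedy — and the actual content of \cite{Kry97,BaJa02,BaJa07} — is to exploit that the inf-convolutions are \emph{semiconvex}, not merely Lipschitz: by Alexandrov's theorem they are twice differentiable a.e. and satisfy the supersolution inequality pointwise at such points, and Jensen's lemma lets you transfer a touching point of the convex combination to nearby points of twice differentiability of each summand, where convexity of $H$ gives the pointwise inequality for the combination; alternatively one can decouple the Bellman supremum (a function is a supersolution of \eqref{eq:dual-1} iff it is a supersolution of the linear equation $\partial_t\varphi=b(\xi)\cdot D\varphi+\tr(a(\xi)D^2\varphi)$ for every fixed $\xi$) and invoke the equivalence of viscosity and distributional supersolutions for constant-coefficient linear equations, for which convolution with $0\leq f\in L^1$ is harmless. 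In your write-up the inf-convolution is used only to gain uniform continuity for the Riemann-sum convergence; its semiconvexity, together with Jensen's lemma (or a doubling-of-variables argument), is precisely the missing ingredient that makes the convex-combination step legitimate, and without it the heart of your proof is an assertion, not an argument.
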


Below is another version that will be needed. 

\begin{lemma}
\label{lem:convolution-bis} 
Assume \eqref{bassumption}, $\varphi \in C_b(\R^d \times (0,\infty))$ is a
supersolution of \eqref{eq:dual-1}, and $0 \leq g \in
L^1(\R^d)$. Then  $\varphi \ast_x g$ remains
a supersolution. 
\end{lemma}

The latter lemma is not proven in \cite{BaJa02,BaJa07}, but can be obtained via a standard approximation procedure. Let us give it for completeness. 

\begin{proof}[Proof of Lemma \ref{lem:convolution-bis}]
By Lemma \ref{lem:convolution}, $\varphi_{\nu}:=\varphi \ast_x g
\ast_x \rho_\nu \ast_t \theta_\nu$ is a supersolution of \eqref{eq:dual-1}. It remains to pass to the limit as $\nu \to
0^+$. We will show that the convergence  is local uniform towards
$\varphi \ast_x g$, which will be sufficient by stability of the equation. We only need a local uniform convergence for $t>0$ because the conclusion concerns the PDE only. With the assumed regularity on $\varphi$, 
$$
\lim_{\nu \to 0^+} \varphi \ast_{x} \rho_\nu \ast_t \theta_\nu= \varphi \quad \mbox{locally uniformly,}
$$
and $\|\varphi \ast_{x} \rho_\nu \ast_t \theta_\nu\|_\infty \leq \|\varphi\|_\infty$. Moreover, for any $x \in \R^d$, $t > 0$ and $R \geq 0$,
\begin{equation*}
\begin{split}
|\varphi_\nu-\varphi \ast_x g|(x,t) & \leq |\varphi \ast_{x} \rho_\nu \ast_t \theta_\nu-\varphi| \ast_x g(x,t)\\
& \leq \left(\sup_{|y| \leq R} |\varphi \ast_{x} \rho_\nu \ast_t \theta_\nu-\varphi|(x-y,t)\right) \int_{|y| \leq R} g(y) \dd y\\
& \quad + 2 \|\varphi\|_\infty \int_{|y| > R} g(y) \dd y.
\end{split}
\end{equation*}
This is enough to conclude since $\lim_{R \to \infty} \int_{|y| > R} g(y) \dd y=0$.
\end{proof}

\subsection{$L^\infty_\INT$ well-posedness: Proofs of Theorem \ref{thm:quantitative-HJB-unif} and Corollary \ref{cor-nv}}\label{esp:ref-proof-bis-bis}

Let us now show that \eqref{eq:dual} is well-posed in $L^\infty_\INT$ as stated in Corollary \ref{cor-nv}. We first need to prove Theorem \ref{thm:quantitative-HJB-unif} for which we will use the lemmas below.  

\begin{lemma}\label{lem:subsolutionsum}
Assume \eqref{bassumption}, and $\varphi$ and $\psi$ are sub and supersolutions of \eqref{eq:dual-1}. Then $(\varphi^\ast-\psi_\ast)^+$ remains a subsolution.
\end{lemma}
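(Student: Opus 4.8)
The key structural facts about the Hamiltonian $H(p,X)=\sup_{\xi\in\mathcal E}\{b(\xi)\cdot p+\tr(a(\xi)X)\}$ in \eqref{Hamiltonian} are that, being a supremum of affine maps, it is \emph{subadditive}, $H(p_1+p_2,X_1+X_2)\le H(p_1,X_1)+H(p_2,X_2)$ (equivalently $H(p_1,X_1)-H(p_2,X_2)\le H(p_1-p_2,X_1-X_2)$), that it is nondecreasing in $X$ since $a(\xi)\in\mathbb S_d^+$, and that it is globally Lipschitz since $b,a$ are bounded under \eqref{bassumption}. These three elementary properties are the only inputs. Set $w:=\varphi^\ast-\psi_\ast$; it is bounded and upper semicontinuous (a sum of the USC functions $\varphi^\ast$ and $-\psi_\ast$), so $w^\ast=w$ and it suffices to verify the test-function inequality for $w$ itself.

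The plan is to run the classical doubling of variables. Fix $\phi\in C^\infty(\R^d\times\R^+)$ and a local maximum $(\hat x,\hat t)$, $\hat t>0$, of $w-\phi$; after the usual perturbations of $\phi$ (a quartic in $x$, a quadratic in $t$, and a term vanishing at infinity, all harmless in the limit because $w$ is bounded) one may assume this is a strict global maximum. For $\alpha>0$ maximize
\[
\Phi_\alpha(x,y,t,s):=\varphi^\ast(x,t)-\psi_\ast(y,s)-\phi\!\left(\tfrac{x+y}{2},\tfrac{t+s}{2}\right)-\tfrac\alpha2|x-y|^2-\tfrac\alpha2|t-s|^2,
\]
obtaining maximizers with $\alpha(|x_\alpha-y_\alpha|^2+|t_\alpha-s_\alpha|^2)\to0$ and $(x_\alpha,t_\alpha),(y_\alpha,s_\alpha)\to(\hat x,\hat t)$, hence $t_\alpha,s_\alpha>0$ for large $\alpha$. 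The Crandall--Ishii theorem on sums (cf.\ \cite{CrIsLi92}) then yields $X,Y\in\mathbb S_d$ with, writing $m_\alpha:=\big(\tfrac{x_\alpha+y_\alpha}2,\tfrac{t_\alpha+s_\alpha}2\big)$,
\[
\Big(\tfrac12\partial_t\phi(m_\alpha)+\alpha(t_\alpha-s_\alpha),\ \tfrac12D\phi(m_\alpha)+\alpha(x_\alpha-y_\alpha),\ X\Big)\in\overline P^{2,+}\varphi^\ast(x_\alpha,t_\alpha),
\]
\[
\Big(-\tfrac12\partial_t\phi(m_\alpha)+\alpha(t_\alpha-s_\alpha),\ -\tfrac12D\phi(m_\alpha)+\alpha(x_\alpha-y_\alpha),\ Y\Big)\in\overline P^{2,-}\psi_\ast(y_\alpha,s_\alpha),
\]
together with $X-Y\le D^2\phi(m_\alpha)$ — the last point because $H$ is $x$-independent, so the penalty contributes the matrix $\alpha\big(\begin{smallmatrix}I&-I\\-I&I\end{smallmatrix}\big)$, which kills diagonal test vectors $(\zeta,\zeta)$.

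Now comes the decisive cancellation: since $H$ does not depend on $x$, the penalty gradients $\alpha(x_\alpha-y_\alpha)$ appear identically in both jets. Inserting the jets into the subsolution inequality for $\varphi$ and the supersolution inequality for $\psi$ (in jet form, equivalent to Definition \ref{def:viscsoln}) and subtracting,
\[
\partial_t\phi(m_\alpha)\le H\big(\tfrac12D\phi(m_\alpha)+\alpha(x_\alpha-y_\alpha),X\big)-H\big(-\tfrac12D\phi(m_\alpha)+\alpha(x_\alpha-y_\alpha),Y\big)\le H\big(D\phi(m_\alpha),X-Y\big)
\]
by subadditivity, hence $\le H(D\phi(m_\alpha),D^2\phi(m_\alpha))$ by monotonicity in $X$. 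Letting $\alpha\to\infty$ and using continuity of $\partial_t\phi,D\phi,D^2\phi$ and Lipschitz continuity of $H$ gives $\partial_t\phi(\hat x,\hat t)\le H(D\phi(\hat x,\hat t),D^2\phi(\hat x,\hat t))$; there is no initial condition to check since \eqref{eq:dual-1} is the PDE alone. I expect the main obstacle to be purely technical bookkeeping: the reduction to a strict global maximum on the unbounded domain and the sign/matrix-inequality accounting in the theorem on sums. A softer route, avoiding Crandall--Ishii, is to regularize $\psi$ to a smooth (hence classical) supersolution $\psi_\nu:=\psi\ast_{x,t}(\rho_\nu\theta_\nu)$ via Lemma \ref{lem:convolution}, observe that $\varphi^\ast-\psi_\nu$ is a subsolution by a one-line test-function computation using subadditivity of $H$ and the classical supersolution inequality for $\psi_\nu$, and then pass to the limit with Proposition \ref{prop:rsl}; the subtlety there is making the relaxed limit land exactly on $\varphi^\ast-\psi_\ast$, which can be arranged by first replacing $\psi$ with its inf-convolutions $\psi_\varepsilon\uparrow\psi_\ast$ so that $\varphi^\ast-\psi_\varepsilon\downarrow w$ monotonically and a decreasing limit of USC subsolutions is again a subsolution.
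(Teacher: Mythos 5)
Your argument is correct and follows essentially the same route as the paper: the paper's proof is precisely the formal subadditivity computation $\sup_\xi H_\xi(\varphi)-\sup_\xi H_\xi(\psi)\le\sup_\xi(H_\xi(\varphi)-H_\xi(\psi))$, with the rigorous justification via semijets and the Ishii lemma (Theorem 8.3 of \cite{CrIsLi92}) explicitly left to the reader, and your doubling-of-variables argument is exactly that standard justification carried out in detail (your mollification-based alternative via Lemma \ref{lem:convolution} is a reasonable backup but not what the paper does).
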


\begin{proof}[Sketch of proof]
First note that $\varphi-\psi$ is a subsolution since 
\begin{equation*}
\partial_t(\varphi-\psi)  \leq \sup_{\xi \in \mathcal{E}} H_\xi(\varphi)-\sup_{\xi \in \mathcal{E}}H_\xi(\psi) \leq \sup_{\xi \in \mathcal{E}} \left(H_\xi(\varphi)-H_\xi(\psi) \right),
\end{equation*}
for $
H_\xi(\varphi):=b(\xi) \cdot D\varphi+\tr \left(a(\xi) D^2 \varphi \right)$. Since $(\varphi^\ast-\psi_\ast)^+=\max \{\varphi^\ast-\psi_\ast,0\}$, it is a subsolution by
 stability of viscosity subsolutions w.r.t. $\max$, 
 see  Proposition~\ref{prop:ssi}. 
\end{proof}

The rigorous justification of the above computations can be done by using a test function,  Ishii lemma, and semijets \cite[Theorem 8.3]{CrIsLi92}. The details are standard and left to the reader.  Here is a second lemma involving the  profile 
\begin{equation*}
U: r \geq 0 \mapsto c_0 \int_r^\infty \e^{-\frac{s^2}{4}} \dd s,
\end{equation*}
where $c_0>0$ is  chosen  such that $U(0)=1$. 

\begin{lemma}
\label{esti-with-decay}
 Let $L_b \geq 0$ and $L_a>0$.   For any $(x,t) \in \R^d \times \R^+$, define
\begin{equation}\label{def-phi-sp}
\Psi(x,t):=
\begin{cases}
U \left(\left(|x|-1-L_b t\right)^+/\sqrt{L_a t}\right) & \mbox{if $t >0$},\\
\mathbf{1}_{|x|<1} & \mbox{if $t=0$}.
\end{cases}
\end{equation}
Then in the viscosity sense,
$$
\partial_t \Psi \geq L_b |D\Psi|+L_a \sup_{\lambda \in \textup{Sp} (D^2 \Psi)} \lambda^+ \quad \mbox{in } \R^d \times (0,\infty)
$$
 (it is in fact an equality). Moreover
$
\Psi \in C_b(\R^d \times (0,\infty)) \cap C(\R^+;L^1(\R^d))
$ where the latter time continuity holds up to $t=0$. 
\end{lemma}

\begin{remark}
 Roughly speaking, we will  use $\Psi$ as a fundamental solution to construct $L^1$ supersolutions of \eqref{eq:dual}, but we cannot take it as a Dirac mass at $t=0$ because of Proposition \ref{lem:blow-up}.
\end{remark}

\begin{proof}  The  desired PDE holds if $|x| < 1+L_b t$ since $\Psi$ is constant in that region. It is also satisfied if $|x|=1+L_b t$ because the subjets are empty. Now if $|x| >1+L_b t$, then
\begin{equation*}
\partial_t \Psi = -L_a \frac{|x|-1-L_b t}{2 (L_a t)^{\frac{3}{2}}} U'-\frac{L_b}{\sqrt{L_at}} U', \quad D \Psi =\frac{x}{|x|} \frac{U'}{\sqrt{L_at}},
\end{equation*}
and
\begin{equation*}
\partial_{x_i x_j}^2 \Psi=\left(\frac{\delta_{ij}}{|x|}-\frac{x_i x_j}{|x|^3} \right) \frac{U'}{\sqrt{L_a t}} +\frac{x_i x_j}{|x|^2} \frac{U''}{L_a t}.
\end{equation*}
Since $U' \leq 0$ and $U'' \geq 0$, we have 
$
\sum_{i,j=1}^d \partial_{x_i x_j}^2 \Psi h_ih_j \leq \frac{U''}{L_a t}
$
for any $h=(h_i)$ with $|h|=1$. Hence $\sup_{\lambda \in \textup{Sp} (D^2 \Psi)} \lambda^+ \leq \frac{U''}{L_a t}$
and 
$$
\partial_t \Psi-  L_b |D\Psi|-L_a \sup_{\lambda \in \textup{Sp} (D^2 \Psi)} \lambda^+\geq-\frac{rU' (r)/2 +U''(r)}{t}
$$
with $r=(|x|-1-L_b t)/ \sqrt{L_a t}$.  The  right-hand side is zero by definition of $U$,  and we obtain the desired equation for positive times. Now the detailed verification that $
\Psi \in C_b(\R^d \times (0,\infty)) \cap C(\R^+;L^1(\R^d))
$ does not contain any particular difficulty and is left to the reader. The proof is complete.
\end{proof}

\begin{proof}[Proof of Theorem \ref{thm:quantitative-HJB-unif}]
Let $L_b:=\|b\|_\infty$ and $L_a:=\|\tr (a)\|_\infty$ and assume $L_a>0$.
 We will use the following Ky Fan inequality \cite{The75}: 
\begin{equation}\label{kyfan}
\tr \left(X Y \right) \leq \sum_{i=1}^d \lambda_i(X) \lambda_i(Y) \quad \forall X, Y \text{ real $d \times d$ symmetric matrices},
\end{equation}
with the ordered eigenvalues $\lambda_1 \leq \dots \leq \lambda_d$. It implies that
any subsolution of \eqref{eq:dual-1} is a subsolution of the equation
\begin{equation}\label{simple-proof}
\partial_t \varphi = L_b |D\varphi|+L_a \sup_{\lambda \in \textup{Sp} (D^2 \varphi)} \lambda^+.
\end{equation}
 Consider now  arbitrary  bounded sub and supersolutions $\varphi$ and $\psi$ of \eqref{eq:dual-1}. By Lemma \ref{lem:subsolutionsum}, $(\varphi^\ast-\psi_\ast)^+$ is a subsolution of \eqref{eq:dual-1} thus of \eqref{simple-proof}.   To prove  Estimate  \eqref{esti:quantitative-HJB-unif}, we will construct an integrable supersolution of \eqref{simple-proof}.
 We will take it of the form 
\begin{equation*}
\psi:=\Psi \ast_x \sup_{\overline{Q}_1(\cdot)} \phi_0,
\end{equation*}
where 
$
\phi_0(x):=\left(\varphi^\ast-\psi_\ast\right)^+(x,t=0)
$
and $\Psi$ is defined in Lemma \ref{esti-with-decay}.
Let us use Lemma  \ref{lem:convolution-bis}  to show that $\psi$ is a supersolution of \eqref{simple-proof}. We need 
$
\sup_{\overline{Q}_1(\cdot)} \phi_0$ to be integrable, and this can be assumed without loss of generality since \eqref{esti:quantitative-HJB-unif} trivially holds if not. Now recalling that $\Psi\in C_b(\R^d \times (0,\infty))$ is a supersolution of \eqref{simple-proof}, Lemma \ref{lem:convolution-bis} applies and $\psi$ remains a supersolution. Since moreover 
$
\Psi \in C(\R^+;L^1(\R^d))
$
and $\sup_{\overline{Q}_1(\cdot)} \phi_0 \in L^\infty(\R^d)$, this supersolution is continuous up to $t=0$ and satisfies 
$$
\psi(x,0) =\int \mathbf{1}_{|y| <1} \sup_{\overline{Q}_1(x-y)} \underbrace{\phi_0}_{=(\varphi^\ast-\psi_\ast)^+(t=0)} \dd y \geq \left(\varphi^\ast-\psi_\ast\right)^+(x,0).
$$
Hence $(\varphi^\ast-\psi_\ast)^+ \leq \psi$ everywhere by the comparison principle, and 
\begin{equation*}
\begin{split}
\int \sup_{\overline{Q}_1(x) \times [0,T]} \left(\varphi^\ast-\psi_\ast\right)^+ \dd x & \leq \int \sup_{\overline{Q}_1(x) \times [0,T]} \psi \dd x\\
& \leq \int \sup_{t \in [0,T]} \Psi(y,t) \dd y \int \sup_{\overline{Q}_2(x)}\phi_0 \dd x,
\end{split}
\end{equation*}
by 
the Fubini  theorem,  etc. The first integral satisfies
\begin{equation*}
\int \sup_{t \in [0,T]} \Psi(y,t) \dd y\leq\int U \left(\left(|y|-1-L_bT\right)^+/\sqrt{L_a T}\right) \dd y<\infty,
\end{equation*}
by 
\eqref{def-phi-sp} 
and since $U$ is nondecreasing and integrable. For the second integral, Lemma \ref{lem:criterion} implies that 
\begin{equation*}
\int \sup_{\overline{Q}_2(x)}\phi_0 \dd x \leq C  \int \sup_{\overline{Q}_{1}(x)} \left(\varphi^\ast-\psi_\ast\right)^+(\cdot,0) \dd x,
\end{equation*}
for a constant $C$ which only depends on $d$. Combining the three inequalities above completes the proof of \eqref{esti:quantitative-HJB-unif} when $L_a=\|\tr (a)\|_\infty>0$. If $L_a=0$, there is no diffusive part in \eqref{eq:dual-1}, and \eqref{esti:quantitative-HJB-unif}
follows from Proposition \ref{representation-deterministic} and Lemma \ref{lem:criterion}.
\end{proof}

We are ready to prove Corollary \ref{cor-nv}. We need the result below.

\begin{lemma}\label{lem:absolute-value}
Assume \eqref{bassumption} and $\varphi$ and $\psi$ are continuous viscosity solutions of \eqref{eq:dual-1}. Then $|\varphi-\psi|$ is a subsolution of the same PDE.
\end{lemma}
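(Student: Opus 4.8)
The plan is to write $|\varphi-\psi|=\max\{\varphi-\psi,\ \psi-\varphi\}$ and to combine two facts already available in the paper: first, that the difference of a subsolution and a supersolution of \eqref{eq:dual-1} is again a subsolution (Lemma \ref{lem:subsolutionsum}), and second, that the pointwise supremum of a family of subsolutions is a subsolution (Proposition \ref{prop:ssi}). No new viscosity-solution computation is needed.

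First I would record that $\varphi-\psi$ is a subsolution of \eqref{eq:dual-1}. Indeed, since $\varphi$ and $\psi$ are (bounded) continuous viscosity solutions, $\varphi$ is in particular a bounded subsolution and $\psi$ a bounded supersolution of \eqref{eq:dual-1}, and by continuity $\varphi^\ast=\varphi$ and $\psi_\ast=\psi$. Hence Lemma \ref{lem:subsolutionsum} applied to the pair $(\varphi,\psi)$ gives that $\varphi^\ast-\psi_\ast=\varphi-\psi$ is a subsolution of \eqref{eq:dual-1}. Applying the same lemma to the pair $(\psi,\varphi)$ — which is legitimate, since $\psi$ is likewise a subsolution and $\varphi$ a supersolution — shows that $\psi-\varphi$ is also a subsolution of \eqref{eq:dual-1}.

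Next I would conclude by stability under supremum. The family $\mathcal{F}:=\{\varphi-\psi,\ \psi-\varphi\}$ is nonempty and, being finite with continuous members, uniformly locally bounded; by the previous step both of its elements are viscosity subsolutions of \eqref{eq:dual-1}. Proposition \ref{prop:ssi} then asserts that
\begin{equation*}
(x,t)\mapsto\sup\{\phi(x,t):\phi\in\mathcal{F}\}=\max\{(\varphi-\psi)(x,t),\ (\psi-\varphi)(x,t)\}=|\varphi-\psi|(x,t)
\end{equation*}
is a viscosity subsolution of \eqref{eq:dual-1}, which is exactly the claim.

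The only points requiring a little care are bookkeeping: that continuity collapses the semicontinuous envelopes so that Lemma \ref{lem:subsolutionsum} indeed yields $\varphi-\psi$ (and not merely $\varphi^\ast-\psi_\ast$), and that the hypotheses of Proposition \ref{prop:ssi} (nonempty, uniformly locally bounded family) are met — both are immediate here. There is no genuine obstacle; the statement is a direct assembly of Lemma \ref{lem:subsolutionsum} and Proposition \ref{prop:ssi}.
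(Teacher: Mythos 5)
Your proposal is correct and follows essentially the same route as the paper: apply Lemma \ref{lem:subsolutionsum} to get that both $\varphi-\psi$ and $\psi-\varphi$ are subsolutions, then use stability under supremum (Proposition \ref{prop:ssi}) to conclude. The paper merely inserts the intermediate step of the positive parts $(\varphi-\psi)^+$ and $(\psi-\varphi)^+$ before taking the final max, which your direct $|\varphi-\psi|=\max\{\varphi-\psi,\psi-\varphi\}$ renders unnecessary.
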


\begin{proof}
 Use that 
$
|\varphi-\psi|=\max \{(\varphi-\psi)^+,(\psi-\varphi)^+\}
$ and Lemma \ref{lem:subsolutionsum}. 
\end{proof}

\begin{proof}[Proof of Corollary \ref{cor-nv}]
The fact that $G_t$ maps $C_{b} \cap L^\infty_{\INT}(\R^d)$ into itself follows from Theorem \ref{thm:quantitative-HJB-unif}. Indeed, if $\varphi_0 \in C_{b} \cap L^\infty_{\INT}(\R^d)$, then the function $(x,t) \mapsto |G_t \varphi_0(x)|$ is a bounded subsolution of \eqref{eq:dual-1}, by Lemma \ref{lem:absolute-value} with $\psi \equiv 0$. Estimate  \eqref{esti:quantitative-HJB-unif} then implies that for any $t \geq 0$, 
\begin{equation*}
\|G_t \varphi_0\|_\INT = \int \sup_{\overline{Q}_1(x)} |G_t \varphi_0| \dd x 
\leq 
C 
\int \sup_{\overline{Q}_1(x)} |\varphi_0| \dd x,
\end{equation*}
for some constant $C=C(d,\|a\|_\infty,\|b\|_\infty,t)$. Let us now prove that 
$$
G_t:C_{b} \cap L^\infty_{\INT}(\R^d) \to C_{b} \cap L^\infty_{\INT}(\R^d)
$$ 
is  Lipschitz  continuous for any $t \geq 0$. Let us apply again \eqref{esti:quantitative-HJB-unif} to 
$$
(x,t) \mapsto |G_t \varphi_0(x)-G_t \psi_0(x)|,
$$ 
which is a subsolution of \eqref{eq:dual-1} by Lemma \ref{lem:absolute-value}. As above we get that 
\begin{equation*}
\left\|G_t \varphi_0-G_t\psi_0 \right\|_\INT \leq C 
\int  \sup_{\overline{Q}_1(x)} |\varphi_0-\psi_0| \dd x,
\end{equation*}
and deduce the desired continuity because $C$ does not depend 
on the initial data. 
 Hence  $G_t$ is a semigroup of  Lipschitz  continuous operators on $C_b \cap L^\infty_\INT(\R^d)$  and it  remains to prove the time continuity. Fix $t_0 \geq 0$ and let us show that 
$$
\int \sup_{\overline{Q}_1(x)} |G_t\varphi_0-G_{t_0} \varphi_0| \dd x \to 0 \quad \mbox{as } t \to t_0.
$$
The pointwise convergence follows from the continuity of $(x,t) \mapsto G_t \varphi_0(x)$ (as continuous solution of \eqref{eq:dual}), and a dominating function is given by
$$
x \mapsto \sup_{(y,s) \in \overline{Q}_1(x) \times [0,t_0+1]} 2|G_s \varphi_0(y)|
$$
which is integrable by Theorem \ref{thm:quantitative-HJB-unif}. 
\end{proof}

\subsection{Weighted $L^1$ contraction: Proof of Theorem \ref{thm:weighted}}
\label{sec:contraction}

We  continue with  the general weighted $L^1$ contraction principle for \eqref{E}.

\begin{proof}[Proof of Theorem \ref{thm:weighted}]
We have to show that  
\begin{equation}
\label{nonlinear-dual-bis}
\int_{\R^d} |u-v|(x,T) \varphi_0(x) \dd x \leq \int_{\R^d} |u_0-v_0|(x) \underline{\varphi}(x,T)\dd x \quad \forall T \geq 0.
\end{equation}
Let us use the Kato inequality \eqref{dualeq} with $0\leq\phi\in C_c^\infty(\R^d\times[0,T])$. We then obtain, for  a.e.  $x \in \R^d$ and $t \geq 0$, 
\begin{equation}\label{kato-tech-preaccurate}
\begin{split}
& \left\{\sum_{i=1}^d q_i(u,v)\partial_{x_i} \phi+\sum_{i,j=1}^d r_{ij}(u,v)\partial_{x_ix_j}^2\phi\right\}(x,t) \\
&  =  
\sgn(u(x,t)-v(x,t)) \int_{v(x,t)}^{u(x,t)} \left\{F'(\xi) \cdot D \phi(x,t)+\tr \left( A(\xi) D^2 \phi (x,t)\right)  \right\} \dif \xi\\
& \leq |u(x,t)-v(x,t)| \esssup_{m \leq \xi \leq M} \left\{F'(\xi) \cdot D \phi(x,t)+\tr \left( A(\xi) D^2 \phi (x,t)\right)  \right\},
\end{split}
\end{equation}
where we have taken the sup over $[m,M]$ because of the maximum principle Lemma \ref{lem:max-comp}. Injecting into \eqref{dualeq}, we get that
\begin{multline}\label{kato-tech-accurate}
\int_{\R^d} |u-v|(x,T)\phi(x,T)\dd x \leq \int_{\R^d}|u_0-v_0|(x)\phi(x,0)\dd x\\
+\iint_{\R^d \times (0,T)} |u-v| \left(\partial_t \phi+\esssup_{m \leq \xi \leq M} \left\{F'(\xi) \cdot D \phi+\tr \left( A(\xi) D^2 \phi\right)  \right\}\right) \dif x \dd t.
\end{multline} 
In the third integral, we recognize the backward 
in time
version of \eqref{eq:dual-true-1}. The proof of \eqref{nonlinear-dual-bis} then consists in taking $\phi(x,t) 
=
\underline{\varphi}(x,T-t)$.

\medskip

{\bf 
Simplified 
case:} $0 \leq \varphi_0 \in C_c (\R^d)$.\\ 
Now \eqref{eq:dual-true} has a unique viscosity solution $\varphi$ which coincides with $\underline{\varphi}$.  It belongs to $C_b(\R^d \times \R^+) \cap C(\R^+;L^1(\R^d))$ by Corollary \ref{cor-nv} and Theorem \ref{thm:uniformly-integrable}. 
Let us regularize it by convolution
$$
\varphi_\nu:= \varphi \ast_{x,t} \left( \rho_\nu \theta_\nu\right),
$$
with the mollifiers \eqref{space-approx} and \eqref{time-approx}. It follows that
$$
\varphi_\nu \in C^\infty(\R^d \times \R^+) \cap C(\R^+;L^1(\R^d))
$$ 
along with all its derivatives. This is enough to take $\phi_\nu(x,t):=\varphi_\nu(x,T-t)$ as a test function in \eqref{kato-tech-accurate} by approximation. Note that  $\phi_\nu$  is a supersolution of the backward version of \eqref{eq:dual-true-1} by Lemma \ref{lem:convolution}, i.e.
$$
\partial_t \phi_\nu+\esssup_{m \leq \xi \leq M} \left\{F'(\xi) \cdot D \phi_\nu+\tr \left( A(\xi) D^2 \phi_\nu\right)\right\}  \leq 0 \quad \mbox{for any $t<T$.}
$$
Inequality \eqref{kato-tech-accurate} with the test function $\phi_\nu$ then implies that
$$
\int_{\R^d} |u-v|(x,T) \varphi_\nu(x,0)\dd x \leq \int_{\R^d}|u_0-v_0|(x) \varphi_\nu(x,T)\dd x,
$$
for any $T \geq 0$ and $\nu >0$. By the $C(\R^+;L^1(\R^d))$ regularity of $\varphi$, the convolution $\varphi_\nu=\varphi \ast_{x,t} (\rho_\nu \theta_\nu)$ converges to $\varphi$ in $C([0,T];L^1(\R^d))$ as $\nu \to 0^+$. Passing to the limit as $\nu \to 0^+$ then yields \eqref{nonlinear-dual-bis}.

\medskip

{\bf General case:} $0 \leq \varphi_0 \in BLSC(\R^d)$.\\
We would like to pointwise approximate $\varphi_0$ by a monotone sequence $\varphi_0^n \uparrow \varphi_0$ such that $0 \leq \varphi_0^n \in C_c(\R^d)$. Take 
$$
\varphi_0^n(x):= \inf_{y \in \R^d} \left\{ \varphi_0(y) \mathbf{1}_{|y| <n}+n|x-y|^2  \right\} \geq 0.
$$
Then $\varphi_0^n$ is continuous as an infconvolution,  see e.g. \cite{CrIsLi92,FlSo93,Bar94,BaCa-Do97}.  Also, 
$$
\varphi_0^n(x) \leq \varphi_0(x) \mathbf{1}_{|x| <n} \quad \forall x \in \R^d,
$$ 
which implies that $\varphi_0^n \in C_c(\R^d)$. In the limit $n \to \infty$, we have $\varphi_0^n \uparrow (\varphi_0)_\ast=\varphi_0$. Let $\varphi_n$ be the solution of \eqref{eq:dual-true} with initial data $\varphi_0^n$, then by the previous step, 
$$
\int_{\R^d} |u-v|(x,T) \varphi_0^n(x)\dd x \leq \int_{\R^d}|u_0-v_0|(x) \varphi_n(x,T)\dd x,
$$
for any $T \geq 0$ and $n$. By the stability of  minimal  solutions (see Proposition \ref{stability-extremal}), these solutions satisfy $\varphi_n \uparrow \underline{\varphi}$ pointwise.  So we conclude the proof of \eqref{nonlinear-dual-bis} by passing to the limit as $n \to \infty$ using the monotone convergence theorem.
\end{proof}

\begin{remark}
Going back to \eqref{dualeq} and \eqref{kato-tech-preaccurate}, we might think about the kinetic setting for \eqref{E} since
\begin{align*}
&\sgn(u(x,t)-v(x,t)) \int_{v(x,t)}^{u(x,t)} \left\{\partial_t\phi(x,t)+F'(\xi) \cdot D \phi(x,t)+\tr \left( A(\xi) D^2 \phi (x,t)\right)  \right\} \dif \xi\\
&= \int_{\R}|\chi(\xi;u)-\chi(\xi;v)| \left\{\partial_t\phi(x,t)+F'(\xi) \cdot D \phi(x,t)+\tr \left( A(\xi) D^2 \phi (x,t)\right)  \right\} \dif \xi,
\end{align*}
with the usual kinetic function $\chi$; cf. \cite{ChPe03}. However, we did not explore this.  For $L^1$ kinetic solutions of \eqref{E}, $u$ and $v$ would take values outside any bounded interval, so there would be further terms for large $|\xi|$ and we do not have any idea of what might then be a reasonable version of \eqref{weighted-intro}.
\end{remark}

\subsection{Duality: Proofs of Theorem \ref{thm:dual} and Corollaries \ref{cor:opt} and \ref{cor:sg}}
\label{sec:proofs-bis}

Let  us now establish the new characterization of viscosity supersolutions (Theorem \ref{thm:dual}). We need several technical lemmas.  

Here is a first classical result on entropy solutions.

\begin{lemma}\label{lem:distribution} Assume \eqref{fassumption} and $u_0 \in L^\infty(\R^d)$. Then, the entropy solution of \eqref{E} is a distributional solution 
of \eqref{E},
\begin{align*}
& \iint_{\R^d \times \R^+} \left(u \partial_t\phi
 +\sum_{i=1}^d F_i(u)\partial_{x_i}\phi+\sum_{i,j=1}^{d} \mathcal{A}_{ij}(u)\partial_{x_ix_j}^2\phi\right)\dif x\dd t \\
& +\int_{\R^d} u_0(x) \phi(x,0) \dd x=0 \quad \forall \phi \in C^\infty_c(\R^d \times \R^+),
\end{align*}
where $\mathcal{A}_{ij}(u)
 = 
\int_0^u A_{ij}(\xi) \dd \xi$.
\end{lemma}

\begin{proof}
Take $\eta(u)=\pm u$ successively in the entropy inequalities, Definition \ref{defentropy}\eqref{item:entropy}.
\end{proof}

Here is another result on the continuity in time.

\begin{lemma}\label{lem:L1}
Assume \eqref{fassumption}, $u_0, v_0 \in L^\infty(\R^d)$ with $u_0-v_0 \in L^1(\R^d)$, $u$ and $v$ entropy solutions of \eqref{E} with initial data $u_0$ and $v_0$. Then $u-v \in C(\R^+;L^1(\R^d))$.
\end{lemma}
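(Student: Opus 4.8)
The plan is to run the Kato inequality (Lemma~\ref{dualequation}) against a one‑parameter family of exponential‑type weights and use a Gr\"onwall argument twice: first to get a uniform $L^1$ bound on $u-v$, then to get uniform integrability of the tails of $u-v$ on compact time intervals. Since the notion of entropy solution already builds in $u,v\in C(\R^+;L^1_{\textup{loc}}(\R^d))$, hence $u-v\in C(\R^+;L^1_{\textup{loc}}(\R^d))$, uniform tightness upgrades this to $C(\R^+;L^1(\R^d))$. Throughout I fix $m<M$ with $u,v\in L^\infty(\R^d\times\R^+;[m,M])$ (possible since $u,v$ are bounded) and use, for a.e.\ $(x,t)$, the pointwise bounds $|q_i(u,v)|\le L_1|u-v|$ and $|r_{ij}(u,v)|\le L_2|u-v|$, where $L_1:=\|F'\|_{L^\infty([m,M])}$ and $L_2:=\|A\|_{L^\infty([m,M])}$ are finite by \eqref{fassumption}.

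\textbf{Step 1 ($L^1$ bound).} For $\veps\in(0,1]$ set $w_\veps(x):=\e^{-\veps\sqrt{1+|x|^2}}\in L^1(\R^d)$, so that $|Dw_\veps|\le\veps w_\veps$ and $|D^2w_\veps|\le C_d\veps w_\veps$. Plugging $\phi(x,t)=w_\veps(x)$ into \eqref{dualeq} — legitimate after a spatial cut‑off $w_\veps(x)\kappa(x/k)$ and letting $k\to\infty$, the remainder vanishing because $w_\veps\in L^1$ and $u,v\in L^\infty$ — and writing $m_\veps(T):=\int|u-v|(x,T)w_\veps\dd x$, the bounds on $q,r,Dw_\veps,D^2w_\veps$ yield
\[
m_\veps(T)\le m_\veps(0)+C\,\veps\,(L_1+L_2)\int_0^T m_\veps(s)\dd s ,
\]
hence $m_\veps(T)\le m_\veps(0)\,\e^{C\veps(L_1+L_2)T}\le\|u_0-v_0\|_{L^1}\,\e^{C\veps(L_1+L_2)T}$. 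Letting $\veps\downarrow0$, monotone convergence gives $\|u(\cdot,T)-v(\cdot,T)\|_{L^1}\le\|u_0-v_0\|_{L^1}<\infty$ for every $T\ge0$; in particular $u-v\in L^\infty(\R^+;L^1(\R^d))$.

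\textbf{Step 2 (uniform tightness) and conclusion.} Fix $T>0$, a smooth nondecreasing $\Theta:\R\to[0,1]$ with $\Theta\equiv1$ on $[0,\infty)$, $\Theta'\le C\Theta$ and $|\Theta''|\le C\Theta$, and for $R>0$ put $w_R(x):=\Theta\big(\sqrt{1+|x|^2}-\sqrt{1+R^2}\big)$; then $\mathbf 1_{\{|x|\ge R\}}\le w_R\le1$, $|Dw_R|\le C w_R$, $|D^2w_R|\le C w_R$, and $w_R(x)\to0$ as $R\to\infty$ for each fixed $x$. By Step 1 the quantities $M_R(T):=\int|u-v|(x,T)w_R\dd x$ are finite (they are $\le\|u_0-v_0\|_{L^1}$), so the same cut‑off/Gr\"onwall argument (now using $\int_{|x|\ge k}|u-v|(\cdot,s)\dd x\to0$ from Step 1 to kill the cut‑off remainder) gives $M_R(T)\le M_R(0)\,\e^{C(L_1+L_2)T}$, whence
\[
\sup_{t\in[0,T]}\int_{\{|x|\ge R\}}|u-v|(x,t)\dd x\ \le\ \e^{C(L_1+L_2)T}\int_{\R^d}|u_0-v_0|\,w_R\,\dd x\ \longrightarrow\ 0\quad(R\to\infty)
\]
by dominated convergence ($w_R\le1$, $w_R\to0$ pointwise, $u_0-v_0\in L^1$). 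Finally, given $\delta>0$ choose $R$ with $\sup_{[0,T]}\int_{|x|\ge R}|u-v|<\delta$; then for $s,t\in[0,T]$,
\[
\|(u-v)(t)-(u-v)(s)\|_{L^1}\le\big\|\big((u-v)(t)-(u-v)(s)\big)\mathbf 1_{\{|x|<R\}}\big\|_{L^1}+2\delta ,
\]
and the right‑hand side tends to $2\delta$ as $s\to t$ by the $C(\R^+;L^1_{\textup{loc}})$ regularity; since $\delta$ and $T$ are arbitrary, $u-v\in C(\R^+;L^1(\R^d))$.

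\textbf{Main obstacle.} The delicate point is Step 2: for second order (even degenerate) diffusion the mass spreads instantaneously, so there is no finite speed of propagation to control $L^1$ tails directly. The device is to pick $w_R$ with first and second derivatives dominated by $w_R$ itself, which produces a Gr\"onwall constant uniform on $[0,T]$ while keeping $\int|u_0-v_0|w_R\to0$. The only other technical nuisance is justifying \eqref{dualeq} for the non‑compactly supported weights $w_\veps,w_R$, handled by the cut‑off $\kappa(x/k)$ together with the integrability facts established in Step 1.
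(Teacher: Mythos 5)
Your proof is correct, but it follows a genuinely different route from the paper's. The paper proves the global bound and the uniform-in-time tightness \eqref{cont-to-prove} by invoking Theorem \ref{thm:weighted} twice: once with the weight $\varphi_0\equiv 1$, and once with rescaled cut-off weights $\varphi_0^R$, after which the key point is to show via viscosity-solution stability (relaxed semilimits, Propositions \ref{prop:rsl} and \ref{prop:rsl-bis}, plus comparison) that the dual solutions $\varphi_R$ of \eqref{eq:dual-true} converge locally uniformly to zero, so the tails vanish by dominated convergence. You instead work directly from the Kato inequality \eqref{dualeq} with explicit weights whose first and second derivatives are dominated by the weight itself, and run Gr\"onwall — first with $\e^{-\veps\sqrt{1+|x|^2}}$ and $\veps\downarrow 0$ for the $L^1$ bound, then with the cut-off profiles $w_R$ for the tails — very much in the spirit of the uniqueness weight of Lemma \ref{standarduniqueness}; the final step (local $L^1$ continuity from the definition of entropy solutions plus tightness) is the same in both arguments. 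Your version buys self-containedness: it uses only \eqref{dualeq} and the maximum principle (Lemma \ref{lem:max-comp}), bypassing Theorem \ref{thm:weighted} and the viscosity machinery entirely; the paper's version is shorter given that Theorem \ref{thm:weighted} is already established and lets the weighted contraction do all the work, with the dual HJB solution providing the sharp decay of the weight rather than a hand-built barrier. Two cosmetic points you may want to make explicit: the existence of a profile $\Theta$ with $0<\Theta\leq 1$, $\Theta\equiv1$ on $\R^+$, $\Theta'\leq C\Theta$, $|\Theta''|\leq C\Theta$ (e.g. $\Theta(s)=\exp(-\eta(-s))$ for a suitable smooth $\eta$), and the measurability/local boundedness of $t\mapsto m_\veps(t)$ and $t\mapsto M_R(t)$ needed for Gr\"onwall, which follows from the $C(\R^+;L^1_{\textup{loc}})$ regularity, Fatou, and the bound of Step 1.
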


\begin{proof}[Proof of Lemma \ref{lem:L1}]
By Theorem \ref{thm:weighted} with $\varphi_0 \equiv 1$, we have
$$
\|u(\cdot,t)-v(\cdot,t)\|_{L^1} \leq \|u_0-v_0\|_{L^1} \quad \forall t \geq 0.
$$
Since the left-hand side is finite, $u-v \in L^\infty(\R^+;L^1(\R^d))$. By the continuity in time with values in $L^1_{\textup{loc}}(\R^d)$ of these functions, it remains to prove that
\begin{equation}
\label{cont-to-prove}
\lim_{R \to \infty} \sup_{t \in [0,T]} \int_{|x| \geq R} |u(x,t)-v(x,t)| \dd x=0 \quad \forall T \geq 0.
\end{equation}
To do so, we will use again Theorem \ref{thm:weighted}. 

Fix $m<M$ such that $u_0$ and $v_0$ take their values in $[m,M]$, and consider 
$$
\varphi_0^R(x):=\varphi_0\left(\frac{x}{R}\right), \quad R>0,
$$ 
where $\varphi_0=\varphi_0(x)$ is some kernel such that
$$
\begin{cases}
0 \leq  \varphi_0 \in C_b(\R^d),\\
\varphi_0(x)=0 \mbox{ for } |x| \leq 1/2,\\
\mbox{and } \varphi_0(x)=1  \mbox{ for } |x| \geq 1.
\end{cases}
$$
With that choice, $\varphi_0^R \to 0$ as $R \to \infty$ locally uniformly in $\R^d$. We then claim that the solutions $\varphi_R$ of \eqref{eq:dual-true} with initial data $\varphi_0^R$ converge locally uniformly in $\R^d\times \R^+$ to zero too. This is a consequence of the method of relaxed semilimits \cite{BaPe88}. Let us give details for completeness. By the maximum principle, 
$$
\|\varphi_R\|_\infty \leq \|\varphi_0^R\|_\infty=\|\varphi_0\|_\infty \quad \forall R>0.
$$
We can then apply Propositions \ref{prop:rsl} and \ref{prop:rsl-bis} to $\wlimsup \varphi_R$ as $R \to \infty$ and get that it is a subsolution of \eqref{eq:dual-true-1} satisfying
$$
\wlimsup \varphi_R(x,0) = \wlimsup \varphi_0^R(x)=0 \quad \forall x \in \R^d.
$$
Let us recall that the above $\wlimsup \varphi_0^R$ as $R \to \infty$ is only taken in space; \normalcolor cf. \eqref{relaxed-referee} and Remark \ref{limt0}. Similarly $\wliminf \varphi_R$ as $R \to \infty$ is a supersolution of \eqref{eq:dual} with zero as initial data. The comparison principle then implies that
$$
\wlimsup \varphi_R \leq \wliminf \varphi_R.
$$ 
Hence $\varphi_R$ converges locally uniformly in $\R^d\times \R^+$, as $R \to \infty$, to the unique solution of \eqref{eq:dual-true} with zero initial data, that is zero itself. 

Now we can show \eqref{cont-to-prove}. By Theorem \ref{thm:weighted} with the previous $m$, $M$, and $\varphi_0^R$, 
\begin{multline*}
\int_{|x| \geq R} |u(x,t)-v(x,t)| \dd x \leq \int_{\R^d} |u(x,t)-v(x,t)| \varphi_0^R(x) \dd x\\
 \leq \int_{\R^d} |u_0(x)-v_0(x)| \varphi_R(x,t) \dd x \leq \int_{\R^d} |u_0(x)-v_0(x)| \sup_{s \in [0,T]} \varphi_R(x,s) \dd x,
\end{multline*}
for any $T \geq t \geq 0$. The right-hand side vanishes as $R \to \infty$ by the discussion above  and  the dominated convergence theorem. The proof of \eqref{cont-to-prove} is complete.
\end{proof}

 Here is  a regularization procedure for the weights. 

\begin{lemma}\label{lem:regularize-weight}
Assume \eqref{fassumption}, $m < M$, $\rho_\nu$ and $\theta_\nu$ are defined in \eqref{space-approx} and \eqref{time-approx}, and $0 \leq \varphi \in BLSC(\R^d \times \R^+)$ satisfies \eqref{nonlinear-dual-optimal} in Theorem \ref{thm:dual}. Then for any $\nu >0$, the convolution 
$$
\varphi_\nu:=\varphi \ast_{x,t} (\rho_\nu \theta_\nu) \in C_b^\infty(\R^d \times \R^+)
$$ 
also satisfies \eqref{nonlinear-dual-optimal} in Theorem \ref{thm:dual}. 
\end{lemma}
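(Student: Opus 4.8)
The plan is to deduce property \eqref{nonlinear-dual-optimal} for $\varphi_\nu$ from the same property for $\varphi$ by a superposition argument, exploiting two elementary invariances of \eqref{nonlinear-dual-optimal}: it is linear in the weight; it is invariant under forward time shifts $\varphi(\cdot,\cdot+c)$, $c\ge0$, directly from its form; and it is invariant under spatial translations, because the coefficients $F$ and $A$ in \eqref{E} do not depend on $x$. Concretely, I would first prove the following claim: for every fixed $y\in\R^d$ and every $\tau$ in the support of $\theta_\nu$ (a compact subset of $(-\infty,0)$), the function $\psi_{y,\tau}(x,s):=\varphi(x-y,s-\tau)$ again satisfies \eqref{nonlinear-dual-optimal}. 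Note $\psi_{y,\tau}$ is well defined on $\R^d\times\R^+$ without invoking the zero extension of $\varphi$, since $s-\tau>s\ge0$.

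To prove the claim, fix admissible $u_0,v_0$ with entropy solutions $u,v$, and set $\hat u_0:=u_0(\cdot+y)$, $\hat v_0:=v_0(\cdot+y)$. These are still $[m,M]$-valued, and by a change of variables in the entropy inequalities (Definition~\ref{defentropy}), which is legitimate because \eqref{E} has $x$-independent coefficients, together with uniqueness (Theorem~\ref{thm:existenceuniquenessentropysoln}), the entropy solutions with data $\hat u_0,\hat v_0$ are $\hat u:=u(\cdot+y,\cdot)$ and $\hat v:=v(\cdot+y,\cdot)$. Applying \eqref{nonlinear-dual-optimal} for $\varphi$ to $\hat u_0,\hat v_0$ with the time parameter $s$ replaced by $s-\tau\ge0$ gives
\[
\int_{\R^d}|u-v|(x+y,t)\,\varphi(x,s-\tau)\dd x\le\int_{\R^d}|u_0-v_0|(x+y)\,\varphi(x,t+s-\tau)\dd x,
\]
and substituting $x\mapsto x-y$ on both sides yields exactly \eqref{nonlinear-dual-optimal} for $\psi_{y,\tau}$.

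It then remains to average. Since $0\le\rho_\nu\in C_c^\infty(\R^d)$ and $0\le\theta_\nu\in C_c^\infty((-\infty,0))$ with $\int\rho_\nu=\int\theta_\nu=1$, the measure $\rho_\nu(y)\theta_\nu(\tau)\dd y\dd\tau$ is a probability measure and, by definition,
\[
\varphi_\nu(x,s)=\iint \psi_{y,\tau}(x,s)\,\rho_\nu(y)\theta_\nu(\tau)\dd y\dd\tau .
\]
Integrating the inequality satisfied by each $\psi_{y,\tau}$ against $\rho_\nu(y)\theta_\nu(\tau)\dd y\dd\tau$ and interchanging the order of integration by Tonelli's theorem (all integrands being nonnegative) gives \eqref{nonlinear-dual-optimal} for $\varphi_\nu$, the right-hand side being possibly $+\infty$ as in Remark~\ref{rem:weighted}. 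The regularity $\varphi_\nu\in C_b^\infty(\R^d\times\R^+)$ is the standard one for mollification of the bounded function $\varphi$: $\|\varphi_\nu\|_\infty\le\|\varphi\|_\infty$, and smoothness up to $t=0$ holds because differentiation in $t$ can be transferred onto $\theta_\nu$, whose support is a compact subset of $(-\infty,0)$, so $\varphi$ is never evaluated at negative times. The only point requiring care is the bookkeeping of the time variable and the sign convention on $\theta_\nu$, which ensures the time convolution only reaches forward in $s$; beyond that, the argument is a routine superposition of the already proved case with the spatial-translation invariance of \eqref{E}.
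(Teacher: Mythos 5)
Your proposal is correct and follows essentially the same route as the paper's proof: apply \eqref{nonlinear-dual-optimal} to spatially translated data (using the $x$-independence of $F,A$ and uniqueness of entropy solutions), shift the time parameter by $-\tau\ge 0$, change variables, and then average against the probability measure $\rho_\nu(y)\theta_\nu(\tau)\dd y\dd\tau$. The extra remarks on Tonelli, the support of $\theta_\nu$ keeping the time argument nonnegative, and the $C_b^\infty$ regularity are fine and only make explicit what the paper leaves implicit.
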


\begin{proof}
By assumption, 
\begin{equation}\label{tech-reg-weight}
\int_{\R^d} |u-v|(x,t) \varphi(x,s) \dd x \leq \int_{\R^d} |u_0-v_0|(x) \varphi(x,t+s)\dd x,
\end{equation}
for any $t,s \geq 0$, $u_0$ and $v_0$ with values in $[m,M]$, and entropy solutions $u$ and $v$ of \eqref{E} with $u_0$ and $v_0$ as initial data.
Our aim is to get the same result for $\varphi_\nu$. Let us use \eqref{tech-reg-weight} not for $u_0$ and $v_0$, but their translations $u_0(\cdot+y)$ and $v_0(\cdot+y)$ for some fixed $y \in \R^d$. Since the PDE part of \eqref{E} is invariant  w.r.t.  translation, the corresponding solutions are $u(x+y,t)$ and $v(x+y,t)$. Hence, 
$$
\int_{\R^d} |u-v|(x+y,t) \varphi(x,s) \dd x \leq \int_{\R^d} |u_0-v_0|(x+y) \varphi(x,t+s)\dd x
$$
for any $t,s \geq 0$. By changing the variable of integration, we obtain that 
$$
\int_{\R^d} |u-v|(x,t) \varphi(x-y,s) \dd x \leq \int_{\R^d} |u_0-v_0|(x) \varphi(x-y,t+s)\dd x.
$$
Now we fix $\tau \leq 0$ and apply this formula, not for $s$ but $s-\tau$. We deduce that 
\begin{equation}\label{last-referee}
  \int_{\R^d} |u-v|(x,t) \varphi(x-y,s-\tau) \dd x \leq \int_{\R^d} |u_0-v_0|(x) \varphi(x-y,t+s-\tau)\dd x.  
\end{equation}
Multiply then by $\rho_\nu(y) \theta_\nu(\tau)$ and integrate over $(y,\tau) \in \R^d \times \R^-$ to conclude. 
\end{proof}

 Later we will pass to the limit\footnote{This is the relaxed limit in \eqref{relaxed-referee} with the parameter $\nu$ instead of $\veps$.}
\begin{equation}\label{def-varphi-flat}
  \varphi_\flat:=\wliminf \varphi_\nu \quad \mbox{as } \nu \to 0^+,  
\end{equation}
 and compare $\varphi_\flat$ with the function $\varphi_\#$ defined in \eqref{precise-representative}. To compare the two limits, we  will  assume in addition that
\begin{equation}\label{hyp:support}
\textup{supp} (\rho_\nu) \subset B_\nu(0) \quad \mbox{and} \quad \textup{supp} (\theta_\nu) \subset (-\nu,0).
\end{equation}

 Here are fundamental properties of $\varphi_\flat$ and $\varphi_\#$ that will be needed. 

\begin{lemma}\label{lem:limit-reg}
Assume $\varphi \in BLSC(\R^d \times \R^+)$, $\varphi_\flat$ and $\varphi_\#$ are as above, and \eqref{hyp:support} holds.  Then: 
\begin{enumerate}[{\rm (i)}]
\item\label{item:tilde} 
The limit $\varphi_\flat$ is the pointwise largest function in $BLSC(\R^d \times \R^+)$ 
 that is  less than or equal 
$\varphi$ a.e. in $\R^d\times\R^+$.  Moreover $\varphi_\flat=\varphi$ a.e. in $\R^d\times\R^+$.

\smallskip

\item\label{item:star} For any $t \geq 0$, $\varphi_\#(\cdot,t)$ is the pointwise largest function in $BLSC(\R^d)$ 
less than or equal
$\varphi(\cdot,t)$ a.e. in $\R^d$.  Moreover $\varphi_\#(\cdot,t)=\varphi(\cdot,t)$ a.e. in $\R^d$.
\end{enumerate}
\end{lemma}

\begin{remark}
\begin{enumerate}[\rm (a)]
\item Above ``pointwise largest function'' means, e.g. for the item \eqref{item:tilde}, that if any other $\psi \in BLSC(\R^d \times \R^+)$ is such that $\psi \leq \varphi$ a.e. in $\R^d \times \R^+$, then necessarily 
$$
\psi(x,t) \leq \varphi_\flat(x,t) \quad \text{for all $(x,t) \in \R^d \times \R^+$.}
$$ 
The second item has to be understood similarly.
\item In the sequel, it is understood  that ``a.e.'' holds in $(x,t)$ in \eqref{item:tilde} and $x$ in \eqref{item:star}, without possibly recalling it.
\end{enumerate}
\end{remark}

\begin{proof} Let us prove \eqref{item:tilde}. Note first that $\varphi_\flat$ is lower semicontinuous as a lower relaxed limit. To prove that  $\varphi_\flat \leq \varphi$  a.e., it suffices to do it for the Lebesgue 
points
of $\varphi$. Such points $(x,t) \in \R^d \times (0,\infty)$ satisfy
$$
\lim_{\nu \to 0^+} \frac{1}{\nu^{d+1}} \iint_{B_\nu(x) \times (t-\nu,t+\nu)} |\varphi(y,s)-\varphi(x,t)| \dd y \dd s=0,
$$
so by the assumptions on the mollifiers, see \eqref{space-approx}, \eqref{time-approx} and \eqref{hyp:support}, we find that
\begin{align*}
|\varphi_\nu(x,t)-\varphi(x,t)|  & \leq 
\frac{1}{\nu^{d+1}} \iint_{B_\nu(x) \times (t,t+\nu)}  \left|\varphi(y,s)-\varphi(x,t)\right| \cdot \\
& \quad \cdot \rho \left(\frac{x-y}{\nu} \right) \theta\left(\frac{t-s}{\nu} \right) \dif y \dd s \to 0 \quad \text{as $\nu \to 0^+$.}
\end{align*}
It follows that
$$
\varphi_\flat(x,t) \leq \lim_{\nu \to 0^+} \varphi_\nu(x,t) =\varphi(x,t),
$$
at any Lebesgue point. 
Moreover, for any fixed $(x,t)$, lower semicontinuity of $\varphi$ implies that
$$
\varphi_\nu(y,s)
=\iint_{B_\nu(y) \times (s,s+\nu)} \underbrace{\varphi(z,\tau)}_{\geq \varphi(x,t)+\co(1)} \rho \left(\frac{y-z}{\nu} \right) \theta\left(\frac{s-\tau}{\nu} \right) \dif z \dd \tau \geq \varphi(x,t)+\co(1)
$$
as $(y,s,\nu) \to (x,t,0^+)$, and we 
 get 
that 
$$
\varphi_\flat(x,t)=\wliminf \varphi_\nu(x,t) \geq \varphi(x,t).
$$
We conclude that $\varphi_\flat=\varphi$ a.e.

Now, to  complete  the proof of \eqref{item:tilde}, it remains to prove that $\varphi_\flat \geq \psi$ pointwise for any other $\psi \in BLSC(\R^d \times \R^+)$ such that $\psi \leq \varphi$ a.e. Given such a function, let
$$
\psi_\flat:=\wliminf \psi \ast_{x,t} (\rho_\nu \theta_\nu).
$$
As above, $\psi \leq \psi_\flat$ pointwise;  but also $\psi_\flat \leq \varphi_\flat$ pointwise since
$$
\psi \ast_{x,t} (\rho_\nu \theta_\nu) \leq \varphi \ast_{x,t} (\rho_\nu \theta_\nu).
$$ 
This   proves \eqref{item:tilde} and the  arguments for \eqref{item:star} are similar.
\end{proof}

Here is  also  a general inequality between $\varphi_\flat$ and $\varphi_\#$  that will be needed. 

\begin{lemma}\label{lem:measurability}
Under the hypotheses of the previous lemma, $(\varphi_\#)_\ast \leq \varphi_\flat$ pointwise in $\R^d\times\R^+$.
\end{lemma}

\begin{proof}
Let us first prove that $\varphi_\#$ is measurable in $(x,t)$. We have
$$
\varphi_\#(x,t)=\sup_{n \geq 1} \overbrace{\inf_{m \geq n} \underbrace{\inf_{\substack{\frac{1}{m} \leq r \leq \frac{1}{n} \\ |y| \leq \frac{1}{n}}} \frac{1}{\textup{meas}(B_r(y))} \int_{B_r(y)} \varphi(x+z,t) \dd z}_{=:\varphi_{n,m}(x,t)}}^{=:\varphi_n(x,t)},
$$
where $n$ and $m$ are integers. For each $\frac{1}{m} \leq r \leq \frac{1}{n}$ and $|y| \leq \frac{1}{n}$, the function
$$
(x,t) \mapsto  \frac{1}{\textup{meas}(B_r(y))} \int_{B_r(y)} \varphi(x+z,t) \dd z
$$
is lower semicontinuous by Fatou's lemma and $\varphi \in BLSC$ (assumption in the previous  lemma). The infimum $\varphi_{n,m}$ remains lower semicontinuous, because $r$ and $y$ live in compact sets. Hence, 
$\varphi_n=\inf_{m \geq n} \varphi_{n,m}$ is measurable in $(x,t)$ and so is 
$\varphi_\#=\sup_{n \geq 1} \varphi_n$. 

We can now prove the lemma. For any $t \geq 0$, the measurable functions $\varphi,\varphi_\#$ satisfy $\varphi_\#(\cdot,t)=\varphi(\cdot,t)$ a.e., hence we may use the Fubini theorem to conclude that
$$
\iint_{\R^d \times \R^+} \mathbf{1}_{\{\varphi_\#=\varphi\}} \dd x \dd t=\int_{\R^+} \left( \int_{\R^d} \mathbf{1}_{\{\varphi_\#(x,t)=\varphi(x,t)\}}  \dd x \right) \dif t=0.
$$
This proves that $\varphi_\#=\varphi$ a.e. in $(x,t)$, so that $(\varphi_\#)_\ast \leq \varphi$ a.e. in $(x,t)$. Hence $(\varphi_\#)_\ast \leq \varphi_\flat$ pointwise by Lemma \ref{lem:limit-reg}\eqref{item:tilde}. 
\end{proof}

Here are further properties that we will need.

\begin{lemma}\label{lem:starmult-ineq}
Let $\varphi,\psi\in BLSC(\R^d\times\R^+)$ and $\varphi_\#,\psi_\#$ as in \eqref{precise-representative}. Then
\begin{enumerate}[{\rm (i)}]
\item\label{missed-label} $\varphi\leq (\varphi_\#)_\ast$ pointwise, and
\item\label{missed-label-bis} if $\varphi\leq \psi_\#$ pointwise, then $\varphi_\#\leq \psi_\#$ pointwise.
\end{enumerate}
\end{lemma}

\begin{proof}
We can show that $\varphi\leq \varphi_\#$ from the definition of $\varphi_\#$ and the lower semicontinuity of $\varphi$, exacty as we showed that $\varphi \leq \varphi_\flat$ in the proof of Lemma \ref{lem:limit-reg}. In particular, $\varphi\leq (\varphi_\#)_\ast$ which is part \eqref{missed-label}. For part \eqref{missed-label-bis}, use Lemma \ref{lem:limit-reg}\eqref{item:star}. It says that $\psi_\#(\cdot,t)=\psi(\cdot,t)$ a.e. in $x$, for each fixed $t \geq 0$. Hence, $\varphi(\cdot,t) \leq \psi(\cdot,t)$ a.e. and the desired inequality follows again from the definitions of $\varphi_\#$ and $\psi_\#$.
\end{proof}

We are now in position to prove  Theorem \ref{thm:dual}. 

\begin{proof}[Proof of Theorem \ref{thm:dual}]

Let us proceed in several steps.

\medskip

\noindent {\bf 1)} \eqref{item:dual}$\implies$\eqref{nonlinear-dual-optimal}.

\smallskip

\noindent By \eqref{item:dual}, $(\varphi_\#)_\ast$ is a $BLSC$ supersolution of \eqref{eq:dual-true-1}. In particular, for any fixed $s \geq 0$, the function
$$
(x,t) \mapsto (\varphi_\#)_\ast(x,t+s)
$$
is also a supersolution of \eqref{eq:dual-true-1}. By Remark \ref{rem:weighted}\eqref{weighted-supersol},  we can apply Theorem \ref{thm:weighted} to this supersolution with the $BLSC$ initial weight $(\varphi_\#)_\ast(\cdot,s)$. The result is that
\begin{equation*}
\int_{\R^d} |u-v|(x,t) (\varphi_\#)_\ast(x,s) \dd x \leq \int_{\R^d} |u_0-v_0|(x) (\varphi_\#)_\ast(x,t+s)\dd x,
\end{equation*}
for any $u_0=u_0(x)$ and $v_0=v_0(x)$ with values in $[m,M]$, $u$ and $v$ entropy solutions of \eqref{E} with $u_0$ and $v_0$ as initial data, and $t,s \geq 0$. This is exactly \eqref{nonlinear-dual-optimal} but with $(\varphi_\#)_\ast$ instead of $\varphi$.  To replace $(\varphi_\#)_\ast$ by $\varphi$, we use Lemma \ref{lem:starmult-ineq}\eqref{missed-label} for the left-hand side. For the right-hand side, we use that 
$(\varphi_\#)_\ast \leq \varphi_\#$ pointwise and the fact that $\varphi_\#(x,t+s)=\varphi(x,t+s)$ for  a.e.  $x$, see Lemma \ref{lem:limit-reg}\eqref{item:star}. This implies \eqref{nonlinear-dual-optimal} with $\varphi$, as desired.

\medskip

\noindent {\bf 2)} \eqref{nonlinear-dual-optimal}$\implies$\eqref{item:dual} {\it for smooth weights $\varphi$.}

\smallskip

\noindent Let us prove the reverse implication when $0\leq\varphi \in C^\infty_b(\R^d \times \R^+)$. We will appropriately choose $u_0$ and $v_0$ later. For the moment, we assume that
$$
m \leq v_0 \leq u_0 \leq M \quad \mbox{and} \quad u_0-v_0 \in L^1(\R^d). 
$$
By Lemmas \ref{lem:max-comp} and \ref{lem:L1}, $0 \leq u-v \in C(\R^+;L^1(\R^d))$, and then we can use \eqref{nonlinear-dual-optimal} to 
get   
\begin{equation}\label{tech-dual-first}
\int_{\R^d} (u-v)(x,T) \varphi(x,s) \dd x \leq \int_{\R^d} (u_0-v_0)(x) \varphi(x,T+s)\dd x,
\end{equation}
for any $T,s \geq 0$. Let us fix $s >0$ and determine what PDE $\varphi$ satisfies. This will be done by injecting the weak formulation of \eqref{E} into \eqref{tech-dual-first} and then pass to the limit as $T \to 0^+$. By Lemma \ref{lem:distribution},
\begin{equation*}
\begin{split}
\int_{\R^d} (u-v)(x,T) \phi(x,T) \dd x & = \iint_{\R^d \times (0,T)} \left((u-v)\partial_t\phi+\sum_{i=1}^d (F_i(u)-F_i(v))\partial_{x_i}\phi\right.\\
& \quad \left.+\sum_{i,j=1}^{d} (\mathcal{A}_{ij}(u)-\mathcal{A}_{ij}(v))\partial_{x_ix_j}^2\phi\right)\dif x\dd t\\
&\quad +\int_{\R^d} (u_0-v_0)(x) \phi(x,0) \dd x,
\end{split}
\end{equation*}
for any $\phi \in C^\infty_c(\R^d \times [0,T])$ and $\mathcal{A}_{ij}'=A_{ij}$. Note that we have rewritten the equation given by Lemma \ref{lem:distribution} with integrals in $t<T$ and an additional final term at $t=T$. This follows from standard arguments using the $L^1_{\textup{loc}}$ continuity in time of $u$ and $v$.
Since $\varphi \in C^\infty_b$, $u-v \in C_t(L^1_x)$ and $u,v \in L^\infty$, a 
standard 
approximation 
argument
shows that we can take $\phi$ to be
$$
\phi(x,t)=\varphi(x,t+s-T),
$$
and get that
\begin{equation}\label{tech-dual-second}
\begin{split}
\int_{\R^d} (u-v)(x,T) \varphi(x,s) \dd x
&= \iint_{\R^d \times (0,T)} \Bigg((u-v)\partial_t\varphi(t+s-T)\\
& \quad +\sum_{i=1}^d (F_i(u)-F_i(v))\partial_{x_i}\varphi(t+s-T)\\
& \quad \left.+\sum_{i,j=1}^{d} (\mathcal{A}_{ij}(u)-\mathcal{A}_{ij}(v))\partial_{x_ix_j}^2\varphi(t+s-T)\right)\dif x\dd t\\
&\quad +\int_{\R^d} (u_0-v_0)(x) \varphi(x,s-T) \dd x.
\end{split}
\end{equation}
Here we assume that $s>0$ and $T$ is so small that $s-T>0$.
Inserting 
\eqref{tech-dual-second} into \eqref{tech-dual-first}, we get
\begin{align*}
& \int_{\R^d} (u_0-v_0)(x) \varphi(x,s+T)\dd x \\
& -\int_{\R^d} (u_0-v_0)(x) \varphi(x,s-T) \dd x  \geq \iint_{\R^d \times (0,T)} \Big( \dots \Big) \dd x \dd t.
\end{align*}
We now would like to divide by $2T$ and pass to the limit as $T \to 0^+$. All the computations are justified, again because $\varphi \in C^\infty_b$, the solutions $u$ and $v$ are bounded, and  $u-v \in C_t(L^1_x)$.  We get that
\begin{equation*}
\begin{split}
& \int_{\R^d} (u_0(x)-v_0(x)) \partial_s \varphi(x,s) \dd x \\
& \geq \frac{1}{2} \int_{\R^d} \left((u_0-v_0)\partial_s \varphi(s)+\sum_{i=1}^d (F_i(u_0)-F_i(v_0))\partial_{x_i}\varphi(s) \right. \\
& \quad \left.+\sum_{i,j=1}^{d} (\mathcal{A}_{ij}(u_0)-\mathcal{A}_{ij}(v_0))\partial_{x_ix_j}^2\varphi(s)\right)\dif x.
\end{split}
\end{equation*}
Substracting the term $\int (u_0-v_0)\partial_s \varphi(s) \dd x/2$ of the right-hand side implies that
\begin{equation}\label{tech-dual-third}
\begin{split}
& \int_{\R^d} (u_0(x)-v_0(x)) \partial_s \varphi(x,s) \dd x \\
& \geq \int_{\R^d} \left(\sum_{i=1}^d (F_i(u_0)-F_i(v_0))\partial_{x_i}\varphi(s)+\sum_{i,j=1}^{d} (\mathcal{A}_{ij}(u_0)-\mathcal{A}_{ij}(v_0))\partial_{x_ix_j}^2\varphi(s)\right)\dif x\\
&= \int_{\R^d} \int_{v_0(x)}^{u_0(x)} \left\{F'(\xi) \cdot D \varphi(x,s)+\tr \left(A_{ij}(\xi)  D^2 \varphi(x,s) \right) \right\} \dif \xi \dd x,
\end{split}
\end{equation}
for any $s >0$ and $0 \leq u_0-v_0 \in L^1(\R^d)$ such that both $u_0$ and $v_0$ take their values in the interval $[m,M]$. It remains to choose $u_0-v_0$ as an approximate unit, up to some multiplicative constant.

Let us introduce new parameters: $x_0 \in \R^d$, $\varepsilon>0$ and $m \leq a < b \leq M$. We would like to choose 
\begin{equation}\label{eq:approxunitmultiplicativeconstant}
u_0-v_0=(b-a) \mathbf{1}_{x_0+(-\varepsilon,\varepsilon)^d},
\end{equation}
with the constraint that both $u_0$ and $v_0$ only take the two values $a$ and $b$. Writing $x=(x_i)$, take e.g.  
$$
u_0(x):=
\begin{cases}
a & \text{if } x_1 >(x_0)_1+\varepsilon,\\
b & \text{ if not},
\end{cases}
$$
and 
$$
v_0(x):=
\begin{cases}
a & \text{if } x_1 >(x_0)_1+\varepsilon \mbox{ or } x\in x_0+(-\veps,\veps)^d,\\
b & \text{if not}.
\end{cases}
$$
Then $m \leq v_0 \leq u_0 \leq M$ and $u_0-v_0 \in L^1(\R^d)$ as required.  
Inserting
our choice into \eqref{tech-dual-third} and dividing by $(b-a) \varepsilon^d $, we deduce that
\begin{equation*}
\begin{split}
& \frac{1}{\varepsilon^d} \int_{x_0+(-\varepsilon,\varepsilon)^d} \partial_s \varphi(x,s) \dd x \\
& \geq \frac{1}{\varepsilon^d} \int_{x_0+(-\varepsilon,\varepsilon)^d}  \frac{1}{b-a} \int_{a}^{b} \left\{F'(\xi) \cdot D \varphi(x,s)+\tr \left(A_{ij}(\xi)  D^2 \varphi(x,s) \right) \right\} \dif \xi \dd x.
\end{split}
\end{equation*}
Let now $\xi \in (m,M)$ be any Lebesgue point of any arbitrarily chosen a.e. representative of $(F',A)$. Take first the limit as $a,b \to \xi$ such that
$\xi$ is the center of each $[a,b]$ in order to use the Lebesgue point property; take next the limit as $\varepsilon \to 0^+$. This gives us that
$$
\partial_s \varphi(x_0,s) \geq F'(\xi) \cdot D \varphi(x_0,s)+\tr \left(A_{ij}(\xi)  D^2 \varphi(x_0,s) \right),
$$
for any $x_0 \in \R^d$, $s>0$, and Lebesgue point $\xi$. That is $\varphi$ is a supersolution of \eqref{eq:dual-true}. This completes the proof of the remaining implication in the case where $\varphi$ is $C^\infty_b$ (and then $\varphi_\#=\varphi$).

\medskip

\noindent {\bf 3)} \eqref{nonlinear-dual-optimal}$\implies$\eqref{item:dual} {\it for nonnegative BLSC weights $\varphi$.}

\smallskip

\noindent In this case we use the regularization procedure of Lemma \ref{lem:regularize-weight}. By this lemma
$$
\varphi_\nu=\varphi \ast_{x,t} (\rho_\nu \theta_\nu)
$$ 
satisfies \eqref{nonlinear-dual-optimal} since $\varphi$ does by assumption. By the previous step we deduce that $\varphi_\nu$ is a supersolution of \eqref{eq:dual-true-1}. Hence
$$
\varphi_\flat=\wliminf \varphi_\nu 
$$
is also a supersolution by stability (cf. Proposition \ref{prop:rsl}). But to prove \eqref{item:dual}, we need to show that $\varphi_\#$ is a supersolution. We will do this by showing that $\varphi_\flat=(\varphi_\#)_\ast$ pointwise (at least for positive times). 
To prove that $\varphi_\flat \leq (\varphi_\#)_\ast$, we need to use \eqref{nonlinear-dual-optimal}. By \eqref{nonlinear-dual-optimal},
\begin{equation*}
\int_{\R^d} |u-v|(x,t) \varphi(x,s) \dd x \leq \int_{\R^d} |u_0-v_0|(x) \varphi(x,t+s) \dd x,
\end{equation*}
for any $u_0$ and $v_0$ in $L^\infty(\R^d,[m,M])$ and corresponding solutions $u$ and $v$ of \eqref{E} and $t,s \geq 0$.
By Lemma \ref{lem:limit-reg}\eqref{item:tilde}, we also have that $\varphi_\flat=\varphi$ a.e. In particular, there is a null set $N \subset \R^+$ such that $\varphi(\cdot,s)=\varphi_\flat(\cdot,s)$ a.e., for any $s \notin N$.\footnote{To find $N$ use that $\iint_{\R^d \times \R^+} \mathbf{1}_{\{\varphi_\flat=\varphi\}} \dd x \dd s=0=\int_{\R^+} \textup{meas} \{\varphi(\cdot,s)=\varphi_\flat(\cdot,s)\} \dd s$ by Fubini.} Fixing $T > 0$, there thus exists a sequence $s_n \to T^-$ such that $s_n \notin N$, for any $n$. Choosing moreover $t_n:=T-s_n$, we deduce that 
 \begin{equation*}
\int_{\R^d} |u-v|(x,t_n) \varphi_\flat(x,s_n) \dd x \leq \int_{\R^d} |u_0-v_0|(x) \varphi(x,T) \dd x.
\end{equation*}
Let us pass to the limit as $n \to \infty$ in the left-hand side. To do so, we use Fatou's lemma, which is possible because of the lower semicontinuity of $\varphi_\flat$ and the continuity of entropy solutions with values in $L^1_{\textup{loc}}(\R^d)$ which implies that
$$
|u-v|(x,t_n) \to |u_0-v_0|(x) \quad \mbox{for a.e. $x$}
$$
(along a subsequence). In the limit, it then follows that  
 \begin{equation*}
\int_{\R^d} |u_0-v_0|(x) \varphi_\flat(x,T) \dd x \leq \int_{\R^d} |u_0-v_0|(x) \varphi(x,T) \dd x
\end{equation*}
for any $u_0$ and $v_0$ in $L^\infty(\R^d,[m,M])$ and $T>0$. To continue, we argue as in the previous step where we chose $0 \leq u_0-v_0 \in L^1(\R^d)$ to be an approximate unit up to a multiplicative constant, cf. \eqref{eq:approxunitmultiplicativeconstant}. The same arguments imply that for any $T>0$, 
$$
\varphi_\flat(\cdot,T) \leq  \varphi(\cdot,T) \quad \mbox{a.e.}
$$
By Lemma \ref{lem:limit-reg}\eqref{item:star}, we conclude that $\varphi_\flat \leq \varphi_\#$ pointwise (for positive times). Hence, $\varphi_\flat \leq (\varphi_\#)_\ast$ and then $\varphi_\flat = (\varphi_\#)_\ast$ pointwise (for positive times) by Lemma \ref{lem:measurability}. This implies that $(\varphi_\#)_\ast=\varphi_\flat$ is a supersolution of \eqref{eq:dual-true-1}. The proof of Theorem \ref{thm:dual} is complete. 
\end{proof}

We have  now established all preliminary results and are ready to prove  our duality results (Corollaries \ref{cor:opt} and \ref{cor:sg}). 

\begin{proof}[Proof of Corollary \ref{cor:opt}]
We already know that $\underline{\varphi} \in \mathscr{W}_{m,M,\varphi_0}$ by Theorem \ref{thm:weighted}. 
Let us prove the formula with the inf. Take $\varphi \in \mathscr{W}_{m,M,\varphi_0}$, which means that $\varphi \in BLSC$ and satisfies Theorem \ref{thm:dual}\eqref{nonlinear-dual-optimal} with $\varphi(t=0) \geq \varphi_0$. By this theorem, $\varphi$ satisfies \eqref{item:dual} as well, that is $\varphi_\#$ is a supersolution of \eqref{eq:dual-true-1}. Recall that $\varphi \leq (\varphi_\#)_\ast$ pointwise by Lemma \ref{lem:starmult-ineq}\eqref{missed-label}. In particular
$$
(\varphi_\#)_\ast(t=0) \geq \varphi(t=0) \geq \varphi_0.
$$
Thus $\varphi_\#$ is a supersolution of the Cauchy problem \eqref{eq:dual-true}, and $\underline{\varphi} \leq \varphi_\#$ by  Proposition \ref{prop:extremal-reformulation}.  Then Lemma \ref{lem:starmult-ineq}\eqref{missed-label-bis} implies that $(\underline{\varphi})_\#\leq\varphi_\#$ pointwise, and we conclude that
$$
(\underline{\varphi})_\#(x,t) = \inf \left\{\varphi_\#(x,t):\varphi \in  \mathscr{W}_{m,M,\varphi_0}\right\} \quad \forall (x,t) \in \R^d \times \R^+
$$
(with an equality because $\underline{\varphi} \in \mathscr{W}_{m,M,\varphi_0}$). The proof is complete.
\end{proof}

\begin{proof}[Proof of Corollary \ref{cor:sg}]
 Fix $m<M$.  By what precedes, the solution semigroup $G_t$ of \eqref{eq:dual-true} is  a  strongly continuous  semigroup of continuous operators  on $C_b \cap L^\infty_\INT(\R^d)$ and satisfies \eqref{etoile-bis}. Let now $H_t$ be another arbitrary such semigroup satisfying \eqref{etoile-bis}, i.e. such that
\begin{equation*}
\int_{\R^d} |S_tu_0-S_tv_0| \varphi_0 \dd x \leq \int_{\R^d} |u_0-v_0| H_t \varphi_0 \dd x,
\end{equation*}
for any $u_0$ and $v_0$ in $L^\infty(\R^d,[m,M])$, $0 \leq \varphi_0\in C_{b} \cap L^\infty_{\INT}(\R^d)$, and $t \geq 0$. We have to prove that for any such $\varphi_0$ and $t$,
$$
G_t \varphi_0 \leq H_t \varphi_0.
$$
First  the minimal solution of \eqref{eq:dual-true} is the unique continuous solution, that is 
$$
\underline{\varphi}(x,t)=G_t  \varphi_0(x) \quad \forall (x,t) \in \R^d \times \R^+. 
$$
Moreover, the above assumption on $H_t$ implies that
$$
H_t \varphi_0(x) \in \mathscr{W}_{m,M,\varphi_0}.
$$
By Corollary \ref{cor:opt} we deduce that for any $x \in \R^d$ and $t \geq 0$, 
$$
\left(G_t  \varphi_0\right)_\#(x) \leq \left(H_t  \varphi_0\right)_\#(x),
$$
where we recall that
$$
\left(G_t  \varphi_0\right)_\#(x)=\liminf_{\substack{r \to 0^+\\y \to x}} \frac{1}{\textup{meas}(B_r(y))} \int_{B_r(y)} G_t\varphi_0(z) \dd z
$$
(and 
similarly
for $H$). Since both $G_t  \varphi_0(x)$ and $H_t  \varphi_0(x)$ are continuous in $x$, we have $(G_t  \varphi_0)_\#=G_t  \varphi_0$ and $(H_t  \varphi_0)_\#=H_t  \varphi_0$ pointwise and the proof is complete. 
\end{proof}

\subsection{$L^\infty_\INT$ versus $L^1$: Proofs of Propositions \ref{natural-second}, \ref{lem:blow-up},  and Theorem \ref{thm:qualitative-HJB}}
\label{ref-title-precise-instability}

Recall  that these results  justify the use of $L^\infty_\INT$ for \eqref{eq:dual}, instead of $L^1$.  We need a result on the  profile $U(r)=c_0 \int_r^\infty \e^{-\frac{s^2}{4}} \dd s$ with $c_0$ such that $U(0)=1$. 

\begin{lemma}\label{tech-lem-example}
For any $(x,t) \in \R \times \R^+$, let
\begin{equation*}
\psi(x,t) :=
\begin{cases}
 U \left(|x|/\sqrt{t}\right) & \mbox{if } t>0,\\
 \mathbf{1}_{\{0\}}(x) & \mbox{if } t=0.
\end{cases}
\end{equation*}
 Then  $\psi \in BUSC(\R \times \R^+)$ and is a subsolution of \eqref{example-diffusion}. 
\end{lemma}

\begin{proof}
Let us prove that $\psi$ is a subsolution of \eqref{example-diffusion}. In the domain
$
\{x \neq 0,t>0\}, 
$
we  find as in the proof of Lemma \ref{esti-with-decay} that 
$$
\partial_t \psi=\partial_{xx}^2 \psi=(\partial_{xx}^2 \psi)^+
$$
in the classical  sense.  If now $x=0$, we have 
$$
\partial_t \psi (0,\cdot)=0 \leq (\partial_{xx}^2 \psi(0,\cdot))^+
$$ 
since $\psi(0,\cdot)$ is constant in time. Let us now show that $\psi$ is $BUSC$. It is clearly continuous for positive $t$ and it only remains to prove that 
\begin{equation*}
\mathbf{1}_{x=0} \geq \limsup_{\R \times \R^+ 
 \ni 
(y,t)\to (x,0)} U \left(|y|/\sqrt{t}\right),
\end{equation*}
for any $x \in \R$. If $x =0$, the result follows since $U(r) \leq U(0) = 1$ for any $r \geq 0$. If  $x \neq 0$, then we use that 
$$
|y|/\sqrt{t} \to \infty \quad \mbox{as } (y,t) \to (x,0^+)
$$  
together with the fact that $\lim_{r \to \infty} U(r)=0$. The proof of Lemma \ref{tech-lem-example} is now complete.
\end{proof}

\begin{proof}[Proof of Proposition \ref{natural-second}] 
Theorem \ref{thm:quantitative-HJB-unif} implies the if-part. Let us prove  the only-if-part.  
It is based on the following pointwise lower bound:
\begin{equation}\label{claim-ns}
\varphi(x,t) \geq U\left(1/\sqrt{t}\right) \sup_{x+[-1,1]} \varphi_0 \quad \forall x \in \R, \forall t > 0,
\end{equation}
where $U$ is the profile from the previous lemma, 
 $0\leq \varphi_0\in C_b(\R)$  and $\varphi$ is the solution of \eqref{example-diffusion} with $\varphi_0$ as initial data. 
Let us prove \eqref{claim-ns}. 
Fix $x$ and $t$. 
The sup on the right-hand side is attained at some $x_0 \in x+[-1,1]$. By the previous lemma, 
$$
(y,s) \mapsto \varphi_0(x_0) U \left(|y-x_0|/\sqrt{s}\right)
$$
is a $BUSC$ subsolution of \eqref{example-diffusion}. At $s=0$, it equals the function
$$
y \mapsto \varphi_0(x_0) \mathbf{1}_{\{x_0\}}(y)
$$ 
which is less or equal to $\varphi_0=\varphi_0(y)$. 
By the comparison principle (Theorem \ref{propofvissoln}),
$$
\varphi(y,s) \geq \varphi_0(x_0) U \left(|y-x_0|/\sqrt{s}\right) \quad \forall y \in \R, \forall s >0.
$$
Taking $(y,s)=(x,t)$, we then get that
$$
\varphi(x,t) \geq \underbrace{\varphi_0(x_0)}_{=\,\sup_{x+[-1,1]} \varphi_0} \underbrace{U \left(|x-x_0|/\sqrt{t}\right)}_{\geq \,U(1/\sqrt{t})}.
$$
This completes the proof of \eqref{claim-ns}. From that bound the only-if-part of Proposition \ref{natural-second} is obvious since $U(1/\sqrt{t})$ is positive for $t>0$.
\end{proof}

\begin{proof}[Proof of Proposition \ref{lem:blow-up}] Let $x_0 \in \R$ and $c>0$ be such that
$$
\rho \geq c \mathbf{1}_{\{x_0 \}},
$$
where $\rho$ is defined in \eqref{kernel-blow-up}, and define 
$$
\psi_n(x,t):=nc \psi \left(nx-x_0,n^2 t \right),
$$ 
where $\psi$ is given by Lemma \ref{tech-lem-example}. It is easy to see that $\psi_n$ remains a subsolution of \eqref{example-diffusion}. Moreover, it is $BUSC$ with 
$$
\varphi_n(x,0) \geq \psi_n(x,0) \quad \forall x \in \R,
$$ 
by \eqref{kernel-blow-up}. Hence $\varphi_n \geq \psi_n$ by the comparison principle and  it suffices to show that 
\begin{equation*}
\lim_{n \to \infty} \psi_n(x,t)=\infty \quad \forall x \in \R, \forall t >0.
\end{equation*}
But this is quite easy because 
$$
\psi_n(x,t)=nc U \left(\left|x-\frac{x_0}{n}\right|/\sqrt{t}\right),
$$
for any $x \in \R$ and $t>0$, and both the constant $c$ and the profile $U(\cdot)$ are positive. The proof of Proposition \ref{lem:blow-up} is complete.
\end{proof}

To show Theorem \ref{thm:qualitative-HJB}, we need the following lemma whose proof is elementary and left to the reader.

\begin{lemma}\label{lem:basic-coarsest}
For any $\varphi_0:\R^d \to \R^d$, $\sup |\varphi_0| \leq |\sup \varphi_0| + |\inf \varphi_0|$.
\end{lemma}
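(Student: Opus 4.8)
The plan is a one-line pointwise estimate, from which the inequality follows by taking a supremum. Write $M:=\sup\varphi_0$ and $m:=\inf\varphi_0$, regarded as elements of $[-\infty,+\infty]$. If $M=+\infty$ or $m=-\infty$, then the right-hand side $|M|+|m|$ is $+\infty$ and there is nothing to prove; so I may assume $-\infty<m\le M<+\infty$ (in particular this covers the bounded case, which is the only one needed in the paper).

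The single fact I would use is the elementary identity $|t|=\max\{t,-t\}$ for $t\in\R$. Applying it pointwise and using that the supremum of a pointwise maximum equals the maximum of the two suprema,
\[
\sup|\varphi_0|=\sup_x\max\{\varphi_0(x),-\varphi_0(x)\}=\max\{\,\textstyle\sup_x\varphi_0(x),\ \sup_x(-\varphi_0(x))\,\}=\max\{M,-m\}.
\]
Since $M\le|M|$ and $-m\le|m|$ hold unconditionally and $|M|,|m|\ge0$, this gives $\sup|\varphi_0|=\max\{M,-m\}\le\max\{|M|,|m|\}\le|M|+|m|$, which is the claim.

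There is no genuine obstacle here: the lemma is elementary. The only points that require a moment's attention are the degenerate cases where $\sup\varphi_0$ or $\inf\varphi_0$ is infinite --- disposed of above by triviality --- and the sign bookkeeping, which is taken care of by the unconditional bounds $M\le|M|$ and $-m\le|m|$. A reader preferring to avoid the ``sup of a max'' identity may instead split on the sign of $\varphi_0(x)$: if $\varphi_0(x)\ge0$ then $|\varphi_0(x)|=\varphi_0(x)\le M\le|M|+|m|$, while if $\varphi_0(x)<0$ then $|\varphi_0(x)|=-\varphi_0(x)\le-m\le|M|+|m|$; taking the supremum over $x$ again yields the result.
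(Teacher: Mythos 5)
Your proof is correct. The paper itself offers no argument for this lemma (it is stated ``whose proof is elementary''), and your pointwise case analysis --- equivalently the identity $\sup|\varphi_0|=\max\{\sup\varphi_0,\,-\inf\varphi_0\}\le|\sup\varphi_0|+|\inf\varphi_0|$ --- is exactly the intended elementary filling-in, with the infinite cases correctly dismissed as trivial (note the statement's codomain ``$\R^d$'' is a typo for $\R$, as you implicitly assumed).
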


\begin{proof}[Proof of Theorem \ref{thm:qualitative-HJB}]
The fact that $E=C_{b} \cap L^\infty_{\INT}(\R^d)$ satisfies \eqref{coarsest-space}--\eqref{coarsest-semigroup} follows from Theorem \ref{thm:uniformly-integrable}  and Corollary \ref{cor-nv}. 
Let now $E$ be another normed space satisfying such properties and let us prove that it is continuously  embedded  into $C_{b} \cap L^\infty_{\INT}(\R^d)$.  Recall  that \eqref{coarsest-semigroup} is required to hold for any data $b=b(\xi)$ and $a=a(\xi)$ satisfying \eqref{bassumption}. Choose e.g.  the eikonal equation
\begin{equation*}
\partial_t \varphi=\sum_{i=1}^d |\partial_{x_{i}} \varphi|
\end{equation*}
and denote by $G^e_t$ its semigroup. By the representation Proposition \ref{representation-deterministic}, 
\begin{equation*}
G^e_t \varphi_0(x)=\sup_{x+t [-1,1]^d} \varphi_0.
\end{equation*}
Since $G^e_{t=1}$ maps  $E \subseteq C_b \cap L^1(\R^d)$ into $X=\overline{E}^{\|\cdot\|_E} \subseteq L^1(\R^d)$ by assumption, the function
$$
x \mapsto \sup_{x+[-1,1]^d} \varphi_0
$$
belongs to $L^1(\R^d)$ for any $\varphi_0 \in E$. Using that $E$ is a vector space, $-\varphi_0 \in E$, and the function
$$
x \mapsto \inf_{x+[-1,1]^d} \varphi_0
$$
also belongs to $L^1(\R^d)$. By Lemma \ref{lem:basic-coarsest}, we conclude that $E \subseteq C_{b} \cap L^\infty_{\INT}(\R^d)$. Finally we use that   
$
G^e_{t=1}:E \to X 
$ 
is continuous at $\varphi_0 \equiv 0$ to obtain that for any $\|\varphi_0^n\|_E \to 0$,   as $n \to \infty$, we have  
$
G^e_{t=1} \varphi_0^n \to 0
$ in $X$. 
Combining this with the continuity of the  inclusion $X 
\subseteq L^1(\R^d)$,  we obtain that
$$
\left\|\sup_{x+[-1,1]^d} \varphi_0^n \right\|_{L_x^1} \to 0.
$$
Using once again that $E$ is a normed space, the same holds with $-\varphi_0$, that is
$$
\left\|\inf_{x+[-1,1]^d} \varphi_0^n \right\|_{L_x^1} \to 0.
$$
By Lemma \ref{lem:basic-coarsest}, we conclude that  $\|\varphi_0^n\|_{\INT} \to 0$ which completes the proof. 
\end{proof}

\section*{Acknowledgements}
 
E. R. Jakobsen is supported by the Research Council of Norway under grant agreement no. 325114 ``IMod. Partial differential equations, statistics and data: An interdisciplinary approach to data-based modelling.'' J. Endal received funding from the Research Council of Norway under the Toppforsk (research excellence) grant agreement no. 250070 ``Waves and Nonlinear Phenomena (WaNP),'' the European Union’s Horizon 2020 research and innovation programme under the Marie Sk{\l}odowska-Curie grant agreement no. 839749 ``Novel techniques for quantitative behavior of convection-diffusion equations (techFRONT),'' and from the Research Council of Norway under the MSCA-TOPP-UT grant agreement no. 312021.   N. Alibaud was supported by the French ANR project CoToCoLa (no. ANR-11-JS01-006-01). Part of this paper was completed during J. Endal's stay at Laboratory of Mathematics of Besan\c{c}on (LmB), Universit\'e de Franche-Comt\'e, and SUPMICROTECH-ENSMM. Finally, we thank Boris Andreianov for  comments and the anonymous referees for detailed reports and suggestions for the paper.


\appendix

\section{Complementary proofs for viscosity solutions}
\label{app:viscosity}

\subsection{Minimal viscosity solutions}\label{app:min-max}

Here are the proofs of Theorem \ref{thm:propextremalsoln1} and Propositions \ref{prop:extremal-reformulation} and \ref{stability-extremal}; the ideas are inspired by \cite{CrLiSo89,BaSoSo93,GiSa01} and the details are given for completeness.

\begin{proof}[Proof of Theorem \ref{thm:propextremalsoln1}]
Consider the infconvolution $(\varphi_0)_\varepsilon$ as 
in \eqref{infconvol}, 
which is at least $C_b$ with
$
\inf \varphi_0 \leq (\varphi_0)_\varepsilon \leq (\varphi_0)_\ast \leq \sup \varphi_0,
$ and
$$
\lim_{\varepsilon \downarrow 0} \uparrow (\varphi_0)_\varepsilon=\sup_{\varepsilon >0} (\varphi_0)_\varepsilon=(\varphi_0)_\ast,
$$
see e.g. \cite{CrIsLi92,FlSo93,Bar94,BaCa-Do97}. Let $\varphi_\varepsilon$ be the viscosity solution of \eqref{eq:dual-1} with initial data $(\varphi_0)_\varepsilon$, whose well-posedness is ensured by Theorem \ref{exisuniqvissoln}. By the maximum principle, see Remark \ref{rem:max}, we have the bounds
$$
\inf \varphi_0 \leq \varphi_\varepsilon \leq \sup \varphi_0.
$$
We can then define the real-valued and bounded function  
$$
\underline{\varphi}:=\sup_{\varepsilon>0} \varphi_\varepsilon.
$$
We will see that this is our desired minimal solution. 

The key step is to prove that 
\begin{equation}\label{formula:extremal}
\underline{\varphi}=\sup_{\varepsilon>0} \varphi_\varepsilon=\wliminf \varphi_\varepsilon 
\end{equation}
where the relaxed limit is taken as $\varepsilon \to 0^+$.   This follows by elementary arguments (see e.g. \cite{Bar94,BaCa-Do97}) since $\varphi^\varepsilon$ is at least lower semicontinuous and  nondecreases as $\varepsilon \downarrow 0$, which follows by comparison since $(\varphi_0)^\varepsilon$ nondecreases as $\varepsilon \downarrow 0$. Let us give details for the reader's convenience.
For any fixed $(x,t)$, 
$$
\wliminf \varphi_\varepsilon(x,t) \leq \lim_{\varepsilon \to 0^+} \varphi_\varepsilon(x,t) =\underline{\varphi}(x,t).
$$
Moreover, for any sequence $(x_n,t_n,\varepsilon_n) \to (x,t,0^+)$ such that $\varepsilon_n \leq \varepsilon_m$ for any $n \geq m$, we have 
$
\varphi_{\varepsilon_n}(x_n,t_n) \geq  \varphi_{\varepsilon_m}(x_n,t_n). 
$
Fixing $m$ and taking the limit in $n$, 
$$
\liminf_{n \to \infty} \varphi_{\varepsilon_n}(x_n,t_n) \geq \liminf_{n \to \infty} \varphi_{\varepsilon_m}(x_n,t_n) \geq \varphi_{\varepsilon_m}(x,t)
$$
by lower semicontinuity  of $\varphi_{\varepsilon_m}$. Taking the limit in $m$,
$$
\liminf_{n \to \infty} \varphi_{\varepsilon_n}(x_n,t_n) \geq \lim_{m \to \infty} \varphi_{\varepsilon_m}(x,t)=\underline{\varphi}(x,t).
$$
This proves \eqref{formula:extremal}. 

By stability by sup (Proposition \ref{prop:ssi}), $\underline{\varphi}$ is a subsolution of \eqref{eq:dual-1}, and by stability by relaxed limit (Proposition \ref{prop:rsl}), $\underline{\varphi}$ is a supersolution of \eqref{eq:dual-1}. To pass to the limit in the initial data, use Proposition \ref{prop:rsl-bis} to infer that
$$
(\underline{\varphi})^\ast(x,t=0) \leq \wlimsup \varphi_\veps(x,0) = \wlimsup \left[\varphi_\veps (\cdot,0) \right](x) \leq (\varphi_0)^\ast(x)
$$
(the first relaxed limit as $\veps\to0^+$ is in $(x,t)$ and the second in $x$). This gives the  inequality of subsolution as in Definition \ref{def:viscsoln}\eqref{ref:id}. For the other inequality, use that $\underline{\varphi}$ is lower semicontinuous, as a sup of continuous functions, with
\begin{equation*}
\underline{\varphi}(x,t=0)=\sup_{\varepsilon >0} \varphi_\varepsilon(x,0)=(\varphi_0)_\ast(x).
\end{equation*} 
This proves that $\underline{\varphi}$ is a solution of \eqref{eq:dual}. It only remains to prove that it is minimal. Let $\varphi$ be another bounded discontinuous solution. Noting that
$$
(\varphi_0)_\varepsilon \leq (\varphi_0)_\ast \leq \varphi_\ast(t=0),
$$
we use once more the comparison principle to deduce that
$
\varphi_\varepsilon \leq \varphi,
$
for any $\varepsilon >0$, so $\underline{\varphi} \leq \varphi$ as $\varepsilon \to 0^+$.
\end{proof}

\begin{proof}[Proof of Proposition \ref{prop:extremal-reformulation}]
We argue as in the end of the proof of Theorem \ref{thm:propextremalsoln1}: Assume $\varphi$ is a bounded supersolution of \eqref{eq:dual}, then
$
(\varphi_0)_\varepsilon \leq (\varphi_0)_\ast \leq \varphi_\ast(t=0)
$
and, by comparison, $
\varphi_\varepsilon \leq \varphi$, etc. 
\end{proof}

\begin{proof}[Proof of Proposition \ref{stability-extremal}]
Let $\underline{\varphi}$ denote the minimal solution of \eqref{eq:dual} with initial data  $\varphi_0:=\sup_{n} (\varphi_0^n)_\ast$.  We have to prove that $\underline{\varphi}=\sup_n \underline{\varphi}_n$, where $\underline{\varphi}_n$ is the minimal solution of \eqref{eq:dual} with initial data $ \varphi_0^n$. By Proposition \ref{prop:extremal-reformulation},
we have $\underline{\varphi}_n \leq \underline{\varphi}$ for any integer $n$. We thus already know that $\underline{\varphi} \geq \sup_n \underline{\varphi}_n$ and it only remains to prove the other inequality. To do so, it suffices to show that $\sup_n \underline{\varphi}_n$ is a supersolution of \eqref{eq:dual} (with initial data $\varphi_0$). Indeed, by Proposition \ref{prop:extremal-reformulation}, we then get $\underline{\varphi} \leq \sup_n \underline{\varphi}
 _n$. It is at this stage that we need to use 
monotonicity. Recall that $n \mapsto \varphi_0^n(x)$ is nondecreasing for any $x$. By the comparison principle,  cf. Remark \ref{comp-extremal},  the same monotonicity holds for the  minimal  solutions which means that   $n \mapsto \underline{\varphi}_n(x,t)$  is nondecreasing for any fixed $x$ and $t$.  Since $ \underline{\varphi}_n$ is lower semicontinuous, we can argue as for \eqref{formula:extremal} and get  that 
$$
\sup_n \underline{\varphi}_n=\wliminf \underline{\varphi}_n,
$$
where the above relaxed limit is taken as $n \to \infty$. By stability, see Propositions \ref{prop:rsl} and \ref{prop:rsl-bis}, we deduce that $\wliminf \underline{\varphi}_n$ is a supersolution of \eqref{eq:dual-1} with initial data 
$$
\wliminf \underline{\varphi}_n(t=0) = \wliminf (\varphi_0^n)_\ast.
$$
But this initial data is precisely  
$$
\wliminf (\varphi_0^n)_\ast=\sup_n  (\varphi_0^n)_\ast=\varphi_0,
$$
again by similar arguments than for \eqref{formula:extremal}.  This completes the proof.
\end{proof}

\subsection{Representation formulas}
\label{app:representation}

Let us prove Propositions \ref{representation-deterministic} and \ref{rem-dyn}. These results are classical  in control  theory, but usually written for continuous or maximal solutions, see \cite{FlSo93,BaCa-Do97,GoSe10,GoSe11}. Here we give the proofs for minimal solutions. 

\begin{proof}[Proof of Proposition \ref{representation-deterministic}]
By the assumption that $a \equiv 0$, \eqref{eq:dual-1} is now
$$
\partial_t \varphi=\sup_{\xi \in \mathcal{E}} \{b(\xi) \cdot D \varphi\} =  \sup_{q \in \mathcal{C}} \{q \cdot D\varphi\},
$$
where $\mathcal{C}=\overline{\textup{co} \left\{\textup{Im} (b)\right\}}$ is compact. By control theory \cite{Bar94,BaCa-Do97} the viscosity solutions of \eqref{eq:dual} is given by
\begin{equation*}
\varphi(x,t)=\sup_{x+t \mathcal{C}} \varphi_0
\end{equation*}
if $\varphi_0$  is bounded and uniformly continuous.  In the general case, consider the infconvolution  \eqref{infconvol}. Recall that $(\varphi_0)_\veps$ is at least bounded and uniformly continuous,   and $(\varphi_0)_\veps \uparrow (\varphi_0)_\ast$ pointwise as $\veps \downarrow 0$. 
It follows that the solution of \eqref{eq:dual-1} with $(\varphi_0)_\veps$ as initial data is
$$
\varphi_\veps(x,t)=\sup_{x+t \mathcal{C}} (\varphi_0)_\veps.
$$
By Proposition \ref{stability-extremal}, the minimal solution of \eqref{eq:dual} is thus
\begin{equation*}
\underline{\varphi}(x,t)=\sup_{\veps >0} \varphi_\veps(x,t)=\sup_{\veps > 0} \sup_{x+t \mathcal{C}} (\varphi_0)_\veps=\sup_{x+t \mathcal{C}} \sup_{\veps > 0} \, (\varphi_0)_\veps=\sup_{x+t \mathcal{C}} (\varphi_0)_\ast. 
\end{equation*}
 Rigorously speaking, Proposition \ref{stability-extremal} implies that this is the minimal solution with initial data $(\varphi_0)_\ast$, but it coincides with the minimal solution associated to $\varphi_0$ by Proposition \ref{prop:extremal-reformulation}.
\end{proof}

\begin{proof}[Proof of Proposition \ref{rem-dyn}]
Equation \eqref{eq:dual-1} is given by 
$$
\partial_t \varphi=\sup_{\xi \in \mathcal{E}} \left\{b(\xi) \cdot D\varphi+\tr \left(\sigma^a(\xi) (\sigma^a)^\T(\xi) D^2 \varphi \right) \right\},
$$
where $\mathcal{E}$ is compact and the coefficients $b$ and $\sigma^a$ are continuous by \eqref{ass:regular}.
By stochastic control theory \cite{FlSo93}, the viscosity solution of \eqref{eq:dual} is given by
$$
\varphi(x,t)=\sup_{\boldsymbol{\xi}_\cdot \in\Bigxi} \mathbb{E} \left\{\varphi_0(\boldsymbol{X}_t^x)\right\}
$$ 
 if  $\varphi_0$  is bounded and uniformly continuous, 
where $\Bigxi$ and $\boldsymbol{X}_s^x$ are defined in Proposition \ref{rem-dyn}.
Let us now repeat the argument of the proof of Proposition \ref{representation-deterministic} considering the infconvolution $(\varphi_0)_\veps$ and the corresponding solution of \eqref{eq:dual-1}
$$
\varphi_\veps(x,t)=\sup_{\boldsymbol{\xi}_\cdot\in\Bigxi}
\mathbb{E} \left\{(\varphi_0)_\veps (\boldsymbol{X}_t^x)\right\}.
$$
We find that the minimal solution of \eqref{eq:dual} is 
$$
\underline{\varphi}(x,t)=\sup_{\veps>0}\varphi_\veps(x,t)=\sup_{\boldsymbol{\xi}_\cdot\in\Bigxi} 
\, \, \sup_{\veps>0} \,  \mathbb{E} \left\{ (\varphi_0)_\veps(\boldsymbol{X}_t^x)\right\}.
$$
Since $(\varphi_0)_\veps\uparrow(\varphi_0)_*$ as $\veps\downarrow0$, we conclude the proof using the monotone convergence theorem:
\begin{align*}
&\sup_{\veps>0} \,  \mathbb{E} \left\{ (\varphi_0)_\veps(\boldsymbol{X}_t^x)\right\}\\ 
&=\lim_{\veps \downarrow0}{\uparrow} \,  \mathbb{E} \left\{(\varphi_0)_\veps(\boldsymbol{X}_t^x)\right\}=
\mathbb{E} \Big\{\lim_{\veps \downarrow0}{\uparrow} (\varphi_0)_\veps(\boldsymbol{X}_t^x) \Big\}=\mathbb{E} \left\{(\varphi_0)_\ast(\boldsymbol{X}_t^x) \right\}.\qedhere
\end{align*}
\end{proof}

\section{Complementary proofs for entropy solutions}\label{app:entropy}

For completeness, 
we recall the proof of  Theorem \ref{thm:existenceuniquenessentropysoln} which is Theorem 1.1 in \cite{Fripr}  under \eqref{fassumption}.  
We will take the opportunity to give details, but we will not perform the doubling of variables to show Lemma \ref{dualequation} for which we will refer to \cite{BeKa04}.  

Recall that \cite{ChPe03,BeKa04}  proved the well-posedness of $L^1$ kinetic or renormalized solutions
which are
equivalent to entropy solutions in $L^1 \cap L^\infty$. The definition of entropy solutions in $L^1 \cap L^\infty$ uses the energy estimate (2.8) of \cite{ChPe03},
\begin{equation*}
\iint_{\R^d \times \R^+} \sum_{k=1}^K\left(\sum_{i=1}^d\partial_{x_i}\zeta_{ik}(u)\right)^2 \dd x \dd t
\leq \frac{1}{2} \|u_0\|_{L^2}<\infty \quad \mbox{if $u_0 \in L^1 \cap L^\infty$,}
\end{equation*}
where 
$
\zeta_{ik}(u)=\int_0^u\sigma^{\scriptscriptstyle A}_{ik}(\xi)\dd \xi.
$ 
As a consequence ``$L^2$'' was used e.g. in \cite[Definition 2.2]{BeKa04} instead of ``$L^2_\textup{loc}$'' in Definition \ref{defentropy}. But we have the following result: 

\begin{lemma}[Local energy estimate]\label{finiteenergy}
Assume \eqref{fassumption}, $u_0\in L^\infty(\R^d)$, $0 \leq \phi \in C^\infty_c(\R^d)$, and $T \geq 0$. If $u$ is an entropy solution of \eqref{E}  in the sense of Definition \ref{defentropy}  and
$$
\|u_0\|_{L^\infty}+\|u\|_{L^\infty}+\|\phi\|_{W^{2,1}} \leq M,
$$ 
then there is a constant $C$ only depending on $T$, $M$, $F$ and $A$ such that
\begin{equation*}
\iint_{\R^d \times (0,T)} \sum_{k=1}^K \left(\sum_{i=1}^d\partial_{x_i}\zeta_{ik} (u(x,t)) \right)^2\phi(x) \dd x\dd t \leq C.
\end{equation*}
\end{lemma}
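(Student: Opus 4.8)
The plan is to derive the local energy estimate directly from the entropy inequality in Definition \ref{defentropy}\eqref{item:entropy}, choosing a suitable entropy and test function. Since the entropy inequality provides a bound on $\iint \eta''(u)\sum_k(\sum_i\partial_{x_i}\zeta_{ik}(u))^2\phi$ in terms of lower-order terms, the idea is to pick $\eta$ with $\eta''\equiv 1$ on the range of $u$, so that this term becomes exactly the quantity we want to control (localised by $\phi$). The only subtlety is that the admissible entropies must be globally $C^2$ and convex, so one cannot literally take $\eta(u)=u^2/2$ and expect the test function to be compactly supported in time; one works on a finite time strip and uses a cutoff in time.

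Concretely, first I would fix $T\geq 0$ and $0\leq\phi\in C_c^\infty(\R^d)$ with the stated bounds, and set $\eta(u):=\tfrac12 u^2$, $q'=\eta'F'=uF'$, $r'=\eta'A=uA$, which is a legitimate entropy-entropy flux triple ($\eta\in C^2(\R)$, convex). Then I would choose a test function of the form $\psi(x,t):=\phi(x)\chi_\delta(t)$, where $\chi_\delta\in C_c^\infty(\R^+)$ satisfies $0\leq\chi_\delta\leq 1$, $\chi_\delta\equiv 1$ on $[0,T]$, $\chi_\delta$ supported in $[0,T+\delta]$, and $|\chi_\delta'|\leq C/\delta$. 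Plugging $\psi$ into the entropy inequality and using $\eta''\equiv 1$ gives
\begin{equation*}
\iint_{\R^d\times\R^+}\eta''(u)\sum_{k=1}^K\Big(\sum_{i=1}^d\partial_{x_i}\zeta_{ik}(u)\Big)^2\phi(x)\chi_\delta(t)\dd x\dd t
\leq \iint_{\R^d\times\R^+}\Big(\eta(u)\phi\chi_\delta'+\sum_i q_i(u)\partial_{x_i}\phi\,\chi_\delta+\sum_{i,j}r_{ij}(u)\partial^2_{x_ix_j}\phi\,\chi_\delta\Big)\dd x\dd t+\int_{\R^d}\eta(u_0)\phi\,\dd x.
\end{equation*}
Next I would bound each term on the right. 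Using $\|u\|_{L^\infty},\|u_0\|_{L^\infty}\leq M$ and \eqref{fassumption} (so $F$ is locally Lipschitz and $A$ locally bounded on $[-M,M]$), we get $|\eta(u)|\leq M^2/2$, $|q_i(u)|\leq C_M$, $|r_{ij}(u)|\leq C_M$ pointwise, with $C_M$ depending only on $M$, $F$, $A$. The terms involving $\partial_{x_i}\phi$, $\partial^2_{x_ix_j}\phi$ are supported in the fixed compact set $\operatorname{supp}\phi$ and bounded by $C_M\|\phi\|_{W^{2,1}}(T+\delta)\leq C_M M(T+1)$ (for $\delta\leq 1$). The $\chi_\delta'$ term is supported in $(T,T+\delta)$ and bounded by $(M^2/2)\|\phi\|_{L^1}\cdot(C/\delta)\cdot\delta=C_M M$; crucially this is uniform in $\delta$. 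The initial term is $\leq (M^2/2)\|\phi\|_{L^1}\leq C_M M$. On the left-hand side, since the integrand is nonnegative and $\chi_\delta\equiv 1$ on $[0,T]$, we may discard the region $t>T$ and replace $\chi_\delta$ by $1$ there to get a lower bound $\iint_{\R^d\times(0,T)}\sum_k(\sum_i\partial_{x_i}\zeta_{ik}(u))^2\phi\,\dd x\dd t$. Combining yields the claimed estimate with $C=C(T,M,F,A)$; letting $\delta\to 0^+$ is not even needed since the bound is already uniform, but one may pass to the limit for cleanliness.

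The main obstacle, such as it is, is purely technical: justifying that $\psi(x,t)=\phi(x)\chi_\delta(t)$ is an admissible test function (it is in $C_c^\infty(\R^d\times\R^+)$, hence directly admissible in Definition \ref{defentropy}\eqref{item:entropy}, so there is actually nothing to approximate), and keeping careful track of the fact that the right-hand side bounds depend only on $T$, $M$, and the data $F,A$ through their local sup/Lipschitz norms on $[-M,M]$ — in particular \emph{not} on $u$ beyond its $L^\infty$ bound, nor on $\operatorname{supp}\phi$ beyond $\|\phi\|_{W^{2,1}}$. There are no genuine PDE difficulties here: the estimate is an immediate algebraic consequence of the entropy inequality for the quadratic entropy, which is exactly why it holds in the pure $L^\infty$ setting where the global $L^2$ energy estimate of \cite{ChPe03} fails.
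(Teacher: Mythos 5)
Your proof is correct and follows essentially the same route as the paper: both use the quadratic entropy $\eta(u)=u^2$ (up to a factor) in the entropy inequality of Definition \ref{defentropy}\eqref{item:entropy} with a test function of the form $\phi(x)$ times a time truncation of $[0,T]$, and then bound the flux terms $q_i(u)$, $r_{ij}(u)$ using only $\|u\|_{L^\infty},\|u_0\|_{L^\infty}\leq M$ and the local bounds on $F'$, $A$ over $[-M,M]$. The only cosmetic difference is that the paper uses $\phi(x)\mathbf{1}_{[0,T]}(t)$ via a ``standard approximation argument'' and keeps the nonnegative final term $\int u^2(x,T)\phi\,\dd x$, whereas you carry out that approximation explicitly with the smooth cutoff $\chi_\delta$ and bound the $\chi_\delta'$ term uniformly in $\delta$ (it is in fact nonpositive, so it could simply be dropped); both are fine.
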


\begin{proof}
We use Definition \ref{defentropy}\eqref{item:entropy} with
the entropy $\eta(u)=|u|^2$ and the corresponding fluxes 
$$
q(u)=2\int_0^u\xi F'(\xi)\dd \xi \quad
\mbox{and} \quad r(u)=2\int_0^u\xi A(\xi)\dd \xi.
$$
We also take a test function $\phi(x) \mathbf{1}_{[0,T]}(t)$ where
$
0 \leq \phi \in C_c^\infty(\R^d)$. It is not smooth in time but a standard approximation argument shows that it can be used in Definition \ref{defentropy}\eqref{item:entropy} if we add also a final value term at $t=T$. Here we need the $L^1_\textup{loc}$ continuity in time of entropy solutions. The result is
\begin{multline*}
\overbrace{\int_{\R^d}u^2(x,T) \phi(x)\dd x}^{\geq 0}+2 \iint_{\R^d \times (0,T)} \sum_{k=1}^K\left(\sum_{i=1}^d\partial_{x_i}\zeta_{ik}(u)\right)^2 \phi \dd x \dd t\\
\leq \int_{\R^d}u_0^2(x) \phi(x)\dd x+\iint_{\R^d \times (0,T)} \left(\sum_{i=1}^dq_i(u)\partial_{x_i}\phi+\sum_{i,j=1}^{d}r_{ij}(u)\partial_{x_ix_j}^2\phi \right) \dif x\dd t.
\end{multline*}
By assumption $\|u_0\|_{L^\infty}+\|u\|_{L^\infty}+\|\phi\|_{W^{2,1}} \leq M$, so it follows that
\begin{equation*}
\begin{cases}
\|q(u)\|_{L^\infty(\R^d \times \R^+,\R^d)} \leq 2 M^2 \esssup_{-M \leq \xi \leq M} |F'(\xi)|, \mbox{ and } \\
\|r(u)\|_{L^\infty(\R^d \times \R^+,\R^{d \times d})} \leq 2 M^2 \esssup_{-M \leq \xi \leq M} |A(\xi)|.
\end{cases}
\end{equation*}
With all these estimates, the conclusion readily follows.
\end{proof}

Let us  now  give precise references on how to show the Kato inequality.  

\begin{proof}[Sketch of the proof of Lemma \ref{dualequation}]
Copy the proof of Theorem 3.1 of \cite{BeKa04} with $l=\infty$ and zero renormalization measures  $\mu_l^u \equiv 0 \equiv  \mu_l^v$.  
With the aid of the previous local energy estimate, check
that every 
computation holds until (3.19)  --  even if $u$ and $v$ satisfy \eqref{item:energy}--\eqref{item:chain} of Definition \ref{defentropy} with $L^2_\textup{loc}$ and not $L^2$ as in \cite{BeKa04}.
This gives \eqref{dualeq} with $\phi \in C_c^\infty(\R^d \times (0,\infty))$. Use an approximation argument for  $\phi(x,t) \mathbf{1}_{[0,T]}(t)$  and the continuity in time with values in  $L^1_\textup{loc}$  to get initial and final terms.
\end{proof}

To show the uniqueness of entropy solutions, it suffices to find a  {\em good}  $\phi$
in \eqref{dualeq},
e.g.  an exponential as in \cite{ChDi01,Fripr}. This gives the result below.

\begin{lemma}
\label{standarduniqueness}
Assume \eqref{fassumption} and $u,v$ are $L^\infty$ entropy solutions of \eqref{E} with initial data $u_0, v_0\in L^\infty(\R^d)$. Then for any $t \geq 0$  and $m<M$ such that $u$ and $v$ take their values in $[m,M]$,  
$$
\int_{\R^d} |u-v|(x,t)\e^{-|x|}\dd x\leq\e^{(L_F+L_A) t}\int_{\R^d} |u_0-v_0|(x)\e^{-|x|}\dd x,
$$
where  $L_F
 = 
\esssup_{[m,M]} |F'|$ and $L_A
 = 
\esssup_{[m,M]} \tr (A)$. 
\end{lemma}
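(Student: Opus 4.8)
The plan is to deduce this weighted $L^1$ estimate directly from the Kato inequality, Lemma~\ref{dualequation}, by plugging in a carefully chosen test function $\phi$, namely an exponential weight. First I would observe that since $u_0,v_0$ take values in $[m,M]$, the maximum principle (Lemma~\ref{lem:max-comp} in Appendix~\ref{app:entropy}) gives $m\le u,v\le M$ a.e., so the flux terms in the Kato inequality are controlled: for a.e.\ $(x,t)$,
\begin{equation*}
\Big|\sum_{i=1}^d q_i(u,v)\partial_{x_i}\phi+\sum_{i,j=1}^d r_{ij}(u,v)\partial_{x_ix_j}^2\phi\Big|\le |u-v|\Big(L_F|D\phi|+L_A\sup_{\lambda\in\textup{Sp}(D^2\phi)}|\lambda|\Big),
\end{equation*}
using the Ky~Fan inequality \eqref{kyfan} for the second-order term, where $L_F=\esssup_{[m,M]}|F'|$ and $L_A=\esssup_{[m,M]}\tr(A)$ (note the statement writes $[-m,M]$, which I would take to mean the interval on which $u,v$ live; one should be consistent with $[m,M]$ throughout). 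Hence any smooth $\phi\ge0$ that is a (classical, hence viscosity) supersolution of
\[
\partial_t\phi+L_F|D\phi|+L_A\sup_{\lambda\in\textup{Sp}(D^2\phi)}\lambda^+\le 0
\]
— up to absorbing the sign issues, it is cleaner to require $\partial_t\phi\le -L_F|D\phi|-L_A\,\Delta\phi$ type bounds for the specific $\phi$ below — makes the space-time integral in \eqref{dualeq} nonpositive and yields the desired contraction.

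The concrete choice is the backward-in-time exponential weight $\phi(x,t):=\e^{(L_F+L_A)(T-t)}w(x)$ with $w(x)=\e^{-\sqrt{1+|x|^2}}$ (a smoothed version of $\e^{-|x|}$, to have a genuine $C^\infty$ test function), fixed $T\ge0$. One computes $|Dw|\le w$ and $|\partial^2_{x_ix_j}w|\le c\,w$ with $c$ absolute, so that $\sup_{\lambda\in\textup{Sp}(D^2w)}|\lambda|\le d\,c\,w$; after checking constants this gives $L_F|Dw|+L_A\sup_\lambda|\lambda|\le (L_F+L_A)\tilde c\, w$ for a harmless dimensional constant $\tilde c$ — to make the argument clean one simply rescales the exponent, taking $\phi(x,t)=\e^{\kappa(L_F+L_A)(T-t)}w(x)$ with $\kappa=\kappa(d)$ large enough so that $\partial_t\phi+L_F|D\phi|+L_A\sup_\lambda|\lambda_\phi|\le0$ pointwise. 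Then $\phi$ is not compactly supported, so I would apply \eqref{dualeq} to $\phi_R:=\phi\,\chi(x/R)$ with a cutoff $\chi$, use the rapid decay of $w$ and its derivatives to show the error terms from $\chi$ vanish as $R\to\infty$, and pass to the limit. The resulting inequality is
\[
\int_{\R^d}|u-v|(x,T)w(x)\dd x\le \e^{\kappa(L_F+L_A)T}\int_{\R^d}|u_0-v_0|(x)w(x)\dd x,
\]
which is exactly the claim (with $\e^{-|x|}$ replaced by $\e^{-\sqrt{1+|x|^2}}$; to get literally $\e^{-|x|}$ one either notes the two weights are comparable up to constants, or checks that $\e^{-|x|}$, while only Lipschitz, can be used by a further mollification argument, which is standard).

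The main obstacle — really the only nonroutine point — is the interplay between the nonsmoothness of the natural weight $\e^{-|x|}$ at the origin and the requirement in Lemma~\ref{dualequation} that $\phi\in C_c^\infty$. I expect the cleanest route is to work throughout with the smooth surrogate $\e^{-\sqrt{1+|x|^2}}$, verify the differential inequality it satisfies with an explicit dimensional constant, handle non-compact support by the cutoff-and-limit argument above (the only estimate needed is that $\iint_{|x|\sim R}|u-v|(|D\chi(x/R)|/R+|D^2\chi(x/R)|/R^2)w\,\dd x\dd t\to0$, which follows from $|u-v|\le M-m$ and integrability of $w$), and finally either state the result with this weight or invoke the comparability $c_1\e^{-|x|}\le\e^{-\sqrt{1+|x|^2}}\le c_2\e^{-|x|}$ to recover the stated form (which, strictly, only proves the inequality up to a constant — so the honest thing is to phrase the lemma with $\e^{-\sqrt{1+|x|^2}}$, as is in fact done in \eqref{standard} earlier in the paper). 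Everything else is a direct substitution into \eqref{dualeq} followed by Gronwall-type absorption.
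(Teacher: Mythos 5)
Your overall route is the paper's: plug an exponential weight, backward in time, into the Kato inequality \eqref{dualeq}, use the maximum principle to restrict the flux arguments to $[m,M]$, and bound the second-order term by Ky Fan. But as written your proposal does not prove the lemma as stated, on two concrete points. First, the constant: you bound the diffusion term by $L_A\sup_{\lambda}|\lambda(D^2\phi)|$ and then by $d\,c\,L_A\,\phi$, which forces you to inflate the exponential rate to $\kappa(d)(L_F+L_A)$. This loses the stated rate $L_F+L_A$. The fix, which is what the paper does, is to observe that $r(u,v)=\sgn(u-v)\int_v^u A(\xi)\,\dd\xi$ is a \emph{nonnegative} symmetric matrix with trace at most $|u-v|\,L_A$, so Ky Fan only brings in $\sup_\lambda \lambda^+(D^2\phi)$; and for $\phi=\e^{(L_F+L_A)(T-t)}\e^{-g(x)}$ with $g$ convex one has $D^2\phi=(Dg\otimes Dg-D^2g)\phi$ with $D^2g\geq 0$, hence $\sup_\lambda\lambda^+(D^2\phi)\leq |Dg|^2\phi\leq\phi$ and likewise $|D\phi|\leq\phi$, giving $\partial_t\phi+L_F|D\phi|+L_A\sup_\lambda\lambda^+(D^2\phi)\leq 0$ with no dimensional constant whatsoever.

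Second, the weight: you prove the estimate for $\e^{-\sqrt{1+|x|^2}}$ and then concede that passing to $\e^{-|x|}$ works only up to multiplicative constants, or requires an unspecified mollification. The statement is for $\e^{-|x|}$ exactly. The paper's device is to take the one-parameter family $\phi_\varepsilon(x,t)=\e^{(L_F+L_A)(T-t)-\sqrt{\varepsilon^2+|x|^2}}$: the differential inequality above holds uniformly in $\varepsilon>0$ (the exponent is convex with gradient of norm at most $1$ for every $\varepsilon$), so one gets the weighted contraction with weight $\e^{-\sqrt{\varepsilon^2+|x|^2}}$ and then lets $\varepsilon\to 0^+$, recovering $\e^{-|x|}$ on both sides by monotone or dominated convergence, with the constant $\e^{(L_F+L_A)T}$ intact. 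Your treatment of the non-compact support (cutoff $\chi(x/R)$, boundedness of $|u-v|$, integrability of the weight and its derivatives) is fine and is indeed the standard way to justify using such $\phi$ in \eqref{dualeq}; and your remark about $[-m,M]$ versus $[m,M]$ in the definition of $L_F,L_A$ is a reasonable reading of a typo. But without the two corrections above you have proved a genuinely weaker inequality than the lemma claims.
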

\begin{remark}
 By the maximum principle, the result remains true for any $[m,M]$ containing the values $u_0$ and $v_0$. But  at this stage of this appendix, this principle is only known in $L^1 \cap L^\infty$ (or $L^1$) by \cite{ChPe03,BeKa04} and it will follow later in $L^\infty$.
\end{remark}
\begin{proof}[Sketch of the proof]
The proof is inspired by \cite{ChDi01,Fripr}. 
Consider
$$
\phi_\varepsilon(x,t):=\e^{(L_F+L_A) (T-t)-\sqrt{\varepsilon^2+|x|^2}},
$$
for some arbitrary $\varepsilon >0$, and check that
\begin{align*}
& |u-v| \partial_t \phi_\varepsilon+\sum_{i=1}^d q_i(u,v)\partial_{x_i}\phi_\varepsilon+\sum_{i,j=1}^d r_{ij}(u,v)\partial_{x_ix_j}^2\phi_\varepsilon \\
& \leq |u-v| \left\{\partial_t \phi_\varepsilon+L_F |D \phi_\varepsilon|+ L_A  \sup_{\lambda \in \textup{Sp}(D^2 \phi_\varepsilon)} \lambda^+\right\} \leq 0
\end{align*}\normalcolor
by the Ky Fan inequality \eqref{kyfan}. Then by the Kato inequality \eqref{dualeq} with $\phi_\varepsilon$, 
$$
\int_{\R^d} |u-v|(x,T)\e^{-\sqrt{\varepsilon^2+|x|^2}}\dd x\leq\e^{(L_F+L_A) T }\int_{\R^d} |u_0-v_0|(x)\e^{-\sqrt{\varepsilon^2+|x|^2}}\dd x
$$
and the result follows in the limit $\varepsilon \to 0^+$.
\end{proof}

\begin{proof}[Proof of Theorem \ref{thm:existenceuniquenessentropysoln}]
By Lemma \ref{standarduniqueness}, it remains to show the existence.
The proof is inspired by \cite{ChPe03,BeKa04}. 
Given $u_0 \in L^\infty(\R^d)$, take $(u_{0}^n)_n$ in $L^1 \cap L^\infty(\R^d)$  such that
\begin{equation}\label{propsofinitialdataseq}
 -\esssup u_{0}^-  
\leq u_0^n \leq 
 \esssup u_0^+ 
\quad \text{and} \quad u_{0}^n \to u_0 \quad \text{in $L_\textup{loc}^1(\R^d)$}.
\end{equation}
Let $u_n$ be the entropy solution of \eqref{E} with initial data $u_{0}^n$. By the maximum principle (in $L^1 \cap L^\infty$), we know that
\begin{equation}\label{propsofinitialdataseq-bis}
 -\esssup u_{0}^-  
\leq u_n \leq 
 \esssup u_0^+. 
\end{equation}
Moreover, by Lemma \ref{standarduniqueness}, we have for any $R \geq 0$, $T \geq 0$, and integers $n$, $m$,
\begin{equation*}
\begin{split}
&\|u_m-u_n\|_{C([0,T];L^1(\{|x| < R\}))}\\
&=\sup_{t\in[0,T]}\int_{|x|<R}|u_m(x,t)-u_n(x,t)|\dd x\\
&\leq \e^{R}\sup_{t\in[0,T]}\int_{\R^d}|u_m(x,t)-u_n(x,t)|\e^{-|x|}\dd x\\
&\leq\e^{R} \e^{(L_F+L_A) T}\int_{\R^d}|u_{0}^m(x)-u_{0}^n(x)|\e^{-|x|}\dd x,
\end{split}
\end{equation*}
where the latter integral tends to zero as $n,m \to \infty$ by \eqref{propsofinitialdataseq}. Hence there exists some $u \in L^\infty(\R^d \times \R^+) \cap C(\R^+;L^1_{\textup{loc}}(\R^d))$ such that
\begin{equation}\label{strongconvofseqentropy}
\lim_{n \to \infty} u_n = u \quad \text{in } C([0,T];L_\textup{loc}^1(\R^d)), \quad  \forall T \geq 0.
\end{equation}
It remains to show that $u$ is an entropy solution with initial data $u_0$. 

 We have to derive 
the $L^2_{\textup{loc}}$ energy estimate of Definition \ref{defentropy}\eqref{item:energy}, and check that it is enough to pass to the limit in the equation as in \cite{ChPe03,BeKa04}. 
By Lemma \ref{finiteenergy} and the $L^\infty$ bounds in 
\eqref{propsofinitialdataseq-bis}, 
the sequence 
$$
\left\{\sum_{i=1}^d\partial_{x_i}\zeta_{ik}(u_n)\right\} \subset L^2(\R^d \times \R^+)
$$ 
is uniformly bounded in $L^2( \mathcal{K})$, for any  $k=1,\ldots,K$, and compact  $\mathcal{K}\subset\R^d \times \R^+$. It then weakly converges in $L^2(\mathcal{K})$ to $\sum_{i=1}^d \partial_{x_i}\zeta_{ik}(u)$. We can identify the limit because $\sum_{i=1}^d\partial_{x_i}\zeta_{ik}(u_n)$ also converges to $\sum_{i=1}^d \partial_{x_i}\zeta_{ik}(u)$ in the distribution sense. Indeed
$$
\zeta_{ik}(\cdot)=\int_0^{\cdot} \sigma^\A_{ik} (\xi) \dd \xi 
$$
is locally Lipschitz continuous since $\sigma^\A_{ik}(\cdot)$ is locally bounded, and \eqref{propsofinitialdataseq-bis} and \eqref{strongconvofseqentropy} imply that $\zeta_{ik}(u_n) \to \zeta_{ik}(u)$ in $C([0,T];L^1_{\textup{loc}}(\R^d))$ for all $T \geq 0$. And as claimed previously, all corresponding derivatives necessarily converge in the distribution sense.  The proof of 
part
\eqref{item:energy} in Definition \ref{defentropy} is complete. Moreover we have found that
\begin{equation*}
\sum_{i=1}^d\partial_{x_i}\zeta_{ik}(u_n)\rightharpoonup\sum_{i=1}^d\partial_{x_i}\zeta_{ik}(u)\quad\text{in } L^2(\mathcal{K}),
\end{equation*}
for any $k=1,\dots,K$ and compact $\mathcal{K} \subset \R^d \times \R^+$. 

To show the chain rule in part \eqref{item:chain} of Definition \ref{defentropy}, we start from the chain rule for $u_n$,
\begin{equation}
\label{chainruleu_n}
\sum_{i=1}^d\partial_{x_i}\zeta_{ik}^\chain(u_n)=\chain(u_n)\sum_{i=1}^d\partial_{x_i}\zeta_{ik}(u_n) \in L^2(\R^d \times \R^+),
\end{equation}
valid for any $\chain\in C(\R)$, $k=1,\ldots,K$, and integer $n$. Recall also that
$$
\zeta_{ik}^\chain(u_n)=\int_0^{u_n} \sigma^\A_{ik} (\xi) \beta (\xi) \dd \xi.
$$
By the previous convergence results and bounds, the right-hand side of \eqref{chainruleu_n} 
converges weakly in $L^2(\mathcal{K})$ to 
$\chain(u)\sum_{i=1}^d\partial_{x_i}\zeta_{ik}(u)$.  
We can argue as before to show that the left-hand side
converges weakly in $L^2(\mathcal{K})$ to $\sum_{i=1}^d\partial_{x_i}\zeta_{ik}^\chain(u)$. We thus get 
part
\eqref{item:chain} of Definition \ref{defentropy} in the limit. Moreover,
\begin{equation}\label{weakenergyconv}
\sum_{i=1}^d\partial_{x_i} \zeta_{ik}^\chain(u_n)\rightharpoonup\sum_{i=1}^d\partial_{x_i}\zeta^\chain_{ik}(u)\quad\text{in } L^2(\mathcal{K}),
\end{equation}
for any $\chain \in C(\R)$, $k=1,\dots,K$, and compact $\mathcal{K} \subset \R^d \times \R^+$. 

Now, it remains to  prove part  \eqref{item:entropy} of Definition \ref{defentropy}. The only difference with \cite{ChPe03,BeKa04} is that the previous convergences hold locally in $L^2$ and not globally. But since we use test functions, the reasoning is the same. Let us recall it for completeness. We focus on the quadratic term. Take $\beta=\sqrt{\eta''}$ and apply the chain rule Definition \ref{defentropy}\eqref{item:chain}, 
\begin{equation*}
\begin{split}
&\iint_{\R^d \times \R^+}\eta''(u_n)\sum_{k=1}^K\left(\sum_{i=1}^d\partial_{x_i}\zeta_{ik}(u_n)\right)^2\phi\dd x \dd t\\
&=\iint_{\R^d \times \R^+}\eta''(u_n)\sum_{k=1}^K\left(\sum_{i=1}^d\partial_{x_i}\zeta_{ik}(u_n) \right) \left(\sum_{j=1}^d\partial_{x_j}\zeta_{jk}(u_n) \right) \phi\dd x \dd t\\
&=\iint_{\R^d \times \R^+}\sum_{k=1}^K\left(\sum_{i=1}^d\partial_{x_i}\zeta_{ik}^{\sqrt{\eta''}}(u_n)\right)\left(\sum_{j=1}^d\partial_{x_j}\zeta_{jk}^{\sqrt{\eta''}}(u_n)\right)\phi\dd x \dd t\\
&=\iint_{\R^d \times \R^+}\sum_{k=1}^K\left(\sum_{i=1}^d\partial_{x_i}\zeta_{ik}^{\sqrt{\eta''}}(u_n)\sqrt{\phi}\right)^2\dd x \dd t\\
& = \sum_{k=1}^K \left\|\sum_{i=1}^d\partial_{x_i}\zeta_{ik}^{\sqrt{\eta''}}(u_n)\sqrt{\phi}\right\|_{L^2(\R^d \times \R^+)}^2.
\end{split}
\end{equation*}
But, by \eqref{weakenergyconv}, we have for any $k=1,\dots,K$,
$$
\sum_{i=1}^d\partial_{x_i} \zeta_{ik}^{\sqrt{\eta''}}(u_n) \sqrt{\phi}\rightharpoonup\sum_{i=1}^d\partial_{x_i}\zeta_{ik}^{\sqrt{\eta''}}(u) \sqrt{\phi} \quad \mbox{in } L^2(\R^d \times \R^+).
$$
It follows that
$$
\left\|\sum_{i=1}^d\partial_{x_i}\zeta_{ik}^{\sqrt{\eta''}}(u)\sqrt{\phi}\right\|_{L^2(\R^d \times \R^+)}\leq \liminf_{n\to\infty}\left\|\sum_{i=1}^d\partial_{x_i}\zeta_{ik}^{\sqrt{\eta''}}(u_n)\sqrt{\phi}\right\|_{L^2(\R^d \times \R^+)},
$$
that is
\begin{equation*}
\begin{split}
&\liminf_{n\to\infty}\iint_{\R^d \times \R^+}\eta''(u_n)\sum_{k=1}^K\left(\sum_{i=1}^d\partial_{x_i}\zeta_{ik}(u_n)\right)^2\phi\dd x \dd t\\
& \geq \sum_{k=1}^K \left\|\sum_{i=1}^d\partial_{x_i}\zeta_{ik}^{\sqrt{\eta''}}(u)\sqrt{\phi}\right\|_{L^2(\R^d \times \R^+)}^2\\
& = \iint_{\R^d \times \R^+}\eta''(u)\sum_{k=1}^K\left(\sum_{i=1}^d\partial_{x_i}\zeta_{ik}(u)\right)^2\phi\dd x \dd t,
\end{split}
\end{equation*}
where similar chain rule computations have been used for $u$. This is enough to pass to the limit in the entropy inequalities of Definition \ref{defentropy}\eqref{item:chain} and the proof is complete.
\end{proof}

As a byproduct of the previous proof, we get the lemma below.

\begin{lemma}\label{lem:max-comp}
Assume \eqref{fassumption}, $u_0 \in L^\infty(\R^d)$, and $u$ is the entropy solution of \eqref{E}. Then $\essinf u_0 \leq u \leq \esssup u_0$. Moreover, if $v$ is the entropy solution with initial data $v_0$, then $u_0 \geq v_0$ implies $u \geq v$.
\end{lemma}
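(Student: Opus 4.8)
\textbf{Proof plan for Lemma \ref{lem:max-comp}.}

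The plan is to obtain both the maximum principle and the comparison principle as direct consequences of the construction carried out in the proof of Theorem \ref{thm:existenceuniquenessentropysoln}, together with the Kato inequality (Lemma \ref{dualequation}). For the $L^\infty$ bounds, recall that $u$ was built as the $C([0,T];L^1_{\textup{loc}})$-limit of entropy solutions $u_n$ with $L^1\cap L^\infty$ initial data $u_0^n$ chosen so that $-\esssup u_0^- \leq u_0^n \leq \esssup u_0^+$ and $u_0^n \to u_0$ in $L^1_{\textup{loc}}$; one can moreover arrange $\essinf u_0 \leq u_0^n \leq \esssup u_0$ for a slightly sharper statement (this is harmless, since truncating $u_0^n$ to the interval $[\essinf u_0,\esssup u_0]$ only improves the $L^1_{\textup{loc}}$ convergence). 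In the $L^1\cap L^\infty$ theory of \cite{ChPe03,BeKa04} the maximum principle is already known, so $\essinf u_0 \leq u_n \leq \esssup u_0$ for every $n$; passing to the a.e.\ limit along a subsequence gives $\essinf u_0 \leq u \leq \esssup u_0$, which is the first assertion.

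For the comparison statement, first reduce to bounded data: pick $m<M$ with $m\leq u_0,v_0\leq M$ a.e., so that by the bounds just proved $m\leq u,v\leq M$ a.e.\ as well. Now I would like to apply Lemma \ref{standarduniqueness} or, better, the Kato inequality \eqref{dualeq} directly. The cleanest route is to use \eqref{dualeq} with the family of test functions $\phi_\varepsilon(x,t)=\e^{(L_F+L_A)(T-t)-\sqrt{\varepsilon^2+|x|^2}}$ from the proof of Lemma \ref{standarduniqueness}: since $q_i(u,v)=\sgn(u-v)\int_v^u F_i'$, $r_{ij}(u,v)=\sgn(u-v)\int_v^u A_{ij}$, and $u_0\geq v_0$ forces $u_0-v_0=(u_0-v_0)^+$, one gets
\begin{equation*}
\int_{\R^d}(u-v)^+(x,T)\,\e^{-|x|}\dd x + \int_{\R^d}(v-u)^+(x,T)\,\e^{-|x|}\dd x = \int_{\R^d}|u-v|(x,T)\,\e^{-|x|}\dd x,
\end{equation*}
and the same kind of Kato-type argument applied to the one-sided quantity $(v-u)^+$. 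Concretely, I would invoke the fact that the doubling-of-variables argument behind Lemma \ref{dualequation} produces not only \eqref{dualeq} but also the one-sided inequality with $|u-v|$ replaced by $(v-u)^+$ and $q_i,r_{ij}$ replaced by the corresponding one-sided fluxes $\sgn((v-u)^+)\int_v^u F_i'$ etc.; combined with $\phi_\varepsilon$ and the Ky Fan inequality \eqref{kyfan} exactly as in the proof of Lemma \ref{standarduniqueness}, this yields
\begin{equation*}
\int_{\R^d}(v-u)^+(x,T)\,\e^{-|x|}\dd x \leq \e^{(L_F+L_A)T}\int_{\R^d}(v_0-u_0)^+(x)\,\e^{-|x|}\dd x = 0,
\end{equation*}
since $v_0\leq u_0$. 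Hence $(v-u)^+(\cdot,T)=0$ a.e.\ for every $T\geq 0$, i.e.\ $u\geq v$.

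The main obstacle is a bookkeeping one rather than a conceptual one: the Kato inequality as stated in Lemma \ref{dualequation} is written with $|u-v|$, so to get the one-sided comparison I must either (i) note that the doubling-of-variables computation in \cite{BeKa04} in fact goes through verbatim with $(v-u)^+$ (equivalently, use the entropy-sub/supersolution splitting, replacing $\sgn$ by $\sgn^+=\mathbf 1_{\{\cdot>0\}}$ throughout), or (ii) avoid the issue entirely by applying the result in $L^1\cap L^\infty$: approximate $u_0,v_0$ by $u_0^n,v_0^n\in L^1\cap L^\infty$ with $u_0^n\geq v_0^n$ (e.g.\ by truncation to a large ball, which preserves the ordering), use the known comparison principle of \cite{ChPe03,BeKa04} to get $u_n\geq v_n$, and pass to the a.e.\ limit via \eqref{strongconvofseqentropy}. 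Route (ii) is the most economical and I would present that one, with route (i) relegated to a remark. Either way the argument is short once the approximation scheme of Theorem \ref{thm:existenceuniquenessentropysoln} is in hand.
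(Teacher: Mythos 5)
Your treatment of the comparison principle (your route (ii)) is correct and is essentially the paper's own proof: truncate $u_0,v_0$ to a ball, which preserves the ordering, invoke the comparison principle of \cite{ChPe03,BeKa04} in $L^1\cap L^\infty$, and pass to the a.e.\ limit using the $C([0,T];L^1_{\textup{loc}})$ convergence from the existence proof together with uniqueness (Lemma \ref{standarduniqueness}) to identify the limits with $u$ and $v$. Route (i), redoing the doubling of variables with $\sgn^+$, would also work but is unnecessary, as you say.

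The maximum-principle half, however, has a genuine gap. You cannot in general ``arrange $\essinf u_0\leq u_0^n\leq \esssup u_0$'' for approximations $u_0^n\in L^1\cap L^\infty(\R^d)$: if $\essinf u_0>0$ (or $\esssup u_0<0$), any function bounded below by $\essinf u_0$ (resp.\ above by $\esssup u_0$) fails to be integrable, and truncating the $u_0^n$ to the interval $[\essinf u_0,\esssup u_0]$ destroys exactly the $L^1$ membership needed to invoke the $L^1\cap L^\infty$ maximum principle of \cite{ChPe03,BeKa04}; appealing instead to an $L^\infty$ maximum principle for the truncated data would be circular, since that is the statement being proved. As written, your limiting argument only yields $-\esssup u_0^-\leq u\leq \esssup u_0^+$, which is strictly weaker than the claim whenever $0\notin[\essinf u_0,\esssup u_0]$. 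The repair is the one the paper uses and costs nothing once your comparison step is in place: prove the comparison principle first, note that the constants $\essinf u_0$ and $\esssup u_0$ are entropy solutions (they satisfy the entropy inequalities trivially, and uniqueness identifies them with the solutions for constant data; truncating a constant to a ball still preserves the ordering, so your route (ii) applies when one of the two data is constant), and then compare $u$ with these two constant solutions to obtain $\essinf u_0\leq u\leq \esssup u_0$.
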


\begin{proof}
For the comparison principle, define $u_0^n(x):=u_0(x) \mathbf{1}_{|x|<n}$ and $v_0^n$ similarly. As previously, the associated entropy solutions $u_n$ and $v_n$ respectively converge towards $u$ and $v$ in $C([0,T];L^1_{\rm loc}(\R^d))$, $T \geq 0$, and thus  a.e.  up to taking a (common) subsequence. If $u_0 \geq v_0$, then $u_0^n \geq v_0^n$ for all $n$, so $u_n \geq v_n$ by the comparison principle in $L^1 \cap L^\infty$, and $u \geq v$ at the limit. For the maximum principle, apply the comparison principle to $v_0:=\essinf u_0$ and $\esssup u_0$ successively. 
\end{proof}

\section{Measurable weights and viscosity supersolutions}\label{app-comp-referee}

Let us provide for completeness a version of Theorem \ref{thm:dual} for measurable and essentially bounded weights $\varphi:\R^d \times \R^+ \to \R$. The result will involve a version of $\varphi_\#$ from \eqref{precise-representative} in both space and time. It is defined as
\begin{equation}
\label{precise-representative-referee}
\varphi_{\#\#}(x,t):= \liminf_{\substack{r \to 0^+ \\ y \to x \\ \R^+ \ni s \to t}} \frac{1}{r \, \textup{meas}(B_r(y))} \iint_{B_r(y)\times(s,s+r)} \varphi(z,\tau ) \dd z \dd \tau.
\end{equation} 
Notably $\varphi_{\#\#} \in BLSC$ with $\varphi_{\#\#} \leq \varphi$ a.e., but  we may not have $\varphi_{\#\#}=\varphi$ a.e. when $\varphi \notin BLSC$; cf. Remark \ref{rem:counter-example}.

\begin{theorem}[Measurable weights and supersolutions]\label{thm:measurable} Assume \eqref{fassumption}, $m<M$, and $\varphi:\R^d \times \R^+ \to \R$ is measurable, nonnegative, essentially bounded, and such that 
\begin{equation}\label{optimal-dual-measurable}
\int_{\R^d} |u-v|(x,t) \varphi(x,s) \dd x \leq \int_{\R^d} |u_0-v_0|(x) \varphi(x,t+s)\dd x \quad \text{a.e. $t, s \geq 0$},
\end{equation}
for any $u_0$ and $v_0$ in $L^\infty(\R^d,[m,M])$ with respective associated entropy solutions $u$ and $v$ of \eqref{E}. Then $\varphi_{\#\#}$ in \eqref{precise-representative-referee}
is a viscosity supersolution of \eqref{eq:dual-true-1}.
\end{theorem}

\begin{remark}\label{rem:counter-example}
The reciprocal assertion may fail. A one dimensional example is $\varphi(x,t)=\mathbf{1}_{E}(x)$ with a fat Cantor set $E$ (a closed nowhere dense set of positive measure). Indeed $\varphi_{\#\#} \equiv 0$\footnote{Use that $\varphi_{\#\#}$ is $LSC$, nonnegative, and equals zero in the dense open set $(\R \setminus E) \times \R^+$.} is always a solution of \eqref{eq:dual-true-1}, but chosing \eqref{E} as the heat equation $\partial_t u=\partial_{xx}^2 u$, $u_0=\mathbf{1}_{\R \setminus E}$, and $v_0 \equiv 0$, we cannot have \eqref{optimal-dual-measurable} because the right-hand side is zero and the left-hand side is positive.
\end{remark}

\begin{remark}[The optimal measurable weight is $\underline{\varphi}$]
Given in addition $0 \leq \varphi_0 \in BLSC(\R^d)$ such that
\begin{equation}\label{initial-data-referee}
\varphi_0(x) \leq \varphi_{\#\#}(x,t=0) \quad \text{for all $x \in \R^d$},
\end{equation}
we have
$$
\int_{\R^d} |u-v|(x,t) \varphi_0(x) \dd x \leq \int_{\R^d} |u_0-v_0|(x) \varphi(x,t)\dd x \quad \text{a.e. $t \geq 0$}.
$$
This is \eqref{nonlinear-dual} with the merely measurable weight $\varphi$. Notably the minimal viscosity solution $\underline{\varphi}$ of \eqref{eq:dual-true} remains optimal within this class of weights satisfying \eqref{optimal-dual-measurable} and \eqref{initial-data-referee}, because $\underline{\varphi} \leq \varphi_{\#\#} \leq \varphi$ where the last inequality holds a.e.
\end{remark}

\begin{remark}
\begin{enumerate}[\rm (a)]
\item Going back to $\varphi \in BLSC$ satisfying Theorem \ref{thm:dual}\eqref{nonlinear-dual-optimal}, and applying Theorems \ref{thm:dual} and \ref{thm:measurable}, we get two viscosity supersolutions $\varphi_\#$ and $\varphi_{\#\#}$ of \eqref{eq:dual-true-1}. This is however coherent because they actually represent the same supersolution. Indeed $\varphi_{\#\#}$ is nothing else than $\varphi_\flat$ in \eqref{def-varphi-flat}, and we have seen during the proof of \eqref{nonlinear-dual-optimal}$\implies$\eqref{item:dual}  that $(\varphi_\#)_\ast=\varphi_\flat=\varphi_{\#\#}$ pointwise.
\item For general $\varphi \in BLSC$, $(\varphi_\#)_\ast \leq  \varphi_{\#\#}$ pointwise by Lemma \ref{lem:measurability} but the reverse inequality may fail. An example is $\varphi(x,t)=\mathbf{1}_{t \neq t_0}$ with some fixed $t_0$, which gives $(\varphi_\#)_\ast=\varphi$  and $\varphi_{\#\#} \equiv 1$.
\end{enumerate}
\end{remark}

We actually already proved the above theorem since $\varphi_{\#\#}=\varphi_\flat$ from \eqref{def-varphi-flat}. But let us give details for the reader's convenience.

\begin{proof}[Proof of Theorem \ref{thm:measurable}]
By ``a.e.'' in \eqref{optimal-dual-measurable}, we assume having a null set $N \subset \R^+$ such that \eqref{optimal-dual-measurable} holds for all $t,s \geq 0$ such that $ s \notin N$ and $t+s \notin N$.
Fix $r>0$ and define
$$
\varphi_r(x,t):=\frac{1}{r \, \textup{meas}(B_r(x))}  \iint_{B_r(x)\times(t,t+r)} \varphi(y,s) \dd y \dd s.
$$ 
As for \eqref{last-referee}, it is easy to deduce from \eqref{optimal-dual-measurable} that
$$
\int_{\R^d} |u-v|(x,t) \varphi(x-y,s-\tau) \dd x \leq \int_{\R^d} |u_0-v_0|(x) \varphi(x-y,t+s-\tau)\dd x,
$$
for all $y \in \R^d$, $t \geq 0$, and $s-\tau \geq 0$, such that $s-\tau \notin N$ and $t+s-\tau \notin N$.
Fix $t,s \geq 0$, multiply by $\frac{1}{r \, \textup{meas}(B_r(x))}$ and integrate over $(y,\tau) \in B_r(0) \times (-r,0)$, which we can do excepted for $\tau \in (s-N) \cup (t+s-N)$. But the latter set is a null set, and this shows that $\varphi_r$ satisfies \eqref{optimal-dual-measurable} for all $t,s \geq 0$. Since moreover $\varphi_r$ is continuous in $(x,t)$, it is a viscosity supersolution of \eqref{eq:dual-true-1} by Theorem \ref{thm:dual} and so is $\varphi_{\#\#}=\wliminf \varphi_r$ as $r \to 0^+$.
\end{proof}

\section{Nonlinear to linear semigroups}\label{app-referee}

In this section we give a sample result on how we from nonlinear duality can recover standard duality notions in the linear case. It contains the discussion and results mentioned in Remark \ref{noq}(c) and the notation and setting is taken from Section \ref{sec:unif-int}.
First note that $X=C_b \cap L^\infty_\INT$ was a natural space for the weight semigroup  $G_t$, but other $X$ could be more appropriate if we consider  
other semigroups than $S_t$. Here are some reasonable assumptions which we will need:
\begin{equation}\label{X-referee}
\text{$X \neq \emptyset$ is a Banach space continuously embedded and dense in $L^1$,}
\end{equation}
such that
\begin{equation}\label{lattice-referee}
\forall \varphi_0 \in L^1, \forall \psi_0 \in X, \quad |\varphi_0| \leq |\psi_0| \Rightarrow \big[\varphi_0 \in X \text{ and } \|\varphi_0\|_X \leq \|\psi_0\|_X\big],
\end{equation}
(i.e. $X$ is a Banach lattice) and for any mollifier $\rho_\nu$ 
(cf. \eqref{space-approx}) and  $\varphi_0 \in X$,
\begin{equation}\label{mollifier-referee}
X \ni \rho_\nu \ast \varphi_0 \to \varphi_0 \text{ strongly in $X$ with } \|\rho_\nu \ast \varphi_0\|_X \leq \|\rho_\nu\|_X \|\varphi_0\|_{L^1}.
\end{equation}
Note that hereafter $L^1=L^1(\R^d)$ etc.

\begin{proposition}[Relation with standard duality]
Take a weakly-$\star$ continuous semigroup $T_t$ of weakly-$\star$ continuous linear operators on $L^\infty=(L^1)^\star$ such that each $T_t$ is positive and commutes with translations.\footnote{That is $T \varphi_0 \geq 0$ if $\varphi_0 \geq 0$, and $T (\varphi_0(\cdot+h))=(T \varphi_0)(\cdot+h)$ for all $\varphi_0 \in L^\infty$ and $h \in \R^d$.} Let $(T_t)_\star$ be its predual semigroup on $L^1$ defined by
\begin{equation}\label{standard-dual-referee}
\int_{\R^d} \varphi_0 T_t u_0 \dd x=\int_{\R^d} u_0 (T_t)_\star\varphi_0 \dd x,  \quad  \forall u_0 \in L^\infty, \forall \varphi_0 \in L^1, \forall t\geq 0.
\end{equation}
Assume also that there exist $X$ satisfying \eqref{X-referee}--\eqref{lattice-referee}--\eqref{mollifier-referee}, a strongly continuous semigroup $H_t$ of continuous operators on $X^+:=\{\varphi_0 \in X:\varphi_0 \geq 0\}$ satisfying
\begin{equation}\label{etoile-bis-referee}
\int_{\R^d} |T_tu_0-T_tv_0| \varphi_0 \dd x \leq \int_{\R^d} |u_0-v_0| H_t \varphi_0 \dd x, \quad \forall  u_0,v_0 \in L^\infty, \forall \varphi_0 \in X^+, \forall t \geq 0,
\end{equation}
and that $H_t$ is the minimal such semigroup. Then $(T_t)_\star$ is necessarily the unique extension of $H_t$ from $X^+$ onto $L^1$ as a semigroup of bounded linear operators.
\end{proposition}

\begin{remark}
For a general duality theory for linear semigroups, see \cite{vNe92}. Let us recall that \eqref{standard-dual-referee} defines a strongly continuous semigroup $(T_t)_\star$ of bounded linear operators on $L^1$.\footnote{Indeed, given $\varphi_0 \in L^1$, $u_0 \in L^\infty \mapsto \int \varphi_0 T_t u_0$ is weakly-$\star$ continuous thus corresponding to a unique element $(T_t)_\star \varphi_0 \in L^1 \subset (L^\infty)^\star$. This operator $(T_t)_\star$ is bounded in $L^1$ since $T_t$ is weakly-$\star$ continuous in $L^\infty$, thus bounded. The semigroup $(T_t)_\star$ is weakly continuous thus strongly continuous.}
The semigroup $H_t$ would be the new predual defined as following Remark \ref{noq}\eqref{item-referee}, which would thus coincide with $(T_t)_\star$ in the linear case.
\end{remark}

\begin{proof}
Take $u_0 \geq 0$, $v_0 \equiv 0$ and $\varphi_0 \geq 0$ in \eqref{etoile-bis-referee}, to get
$$
\int_{\R^d} u_0 (T_t)_\star \varphi_0 \dd x =\int_{\R^d} 
\varphi_0 T_t u_0 \dd x \leq \int_{\R^d} u_0 H_t
\varphi_0 \dd x.
$$ 
This shows that
\begin{equation}\label{first-ineq-referee}
(T_t)_\star \leq H_t \quad \mbox{ on $X^+$}.
\end{equation}
To continue, we claim that $(T_t)_\star$ is a strongly continuous semigroup of continuous operators on $X^+$ satisfying \eqref{etoile-bis-referee}. Let us verify this claim. Let us prove that $(T_t)_\star$ satisfies \eqref{etoile-bis-referee}, as $H_t$ does. Since $T_t \geq 0$ and is linear, 
$$
|T_t u_0-T_t v_0|=|T_t (u_0-v_0)^+-T_t (u_0-v_0)^-| \leq T_t (u_0-v_0)^++T_t (u_0-v_0)^-
$$
for any $u_0$ and $v_0$ in $L^\infty$. Hence
\begin{multline*}
\int_{\R^d} |T_t u_0-T_t v_0| \varphi_0 \dd x \leq \int_{\R^d} \left(T_t (u_0-v_0)^++T_t (u_0-v_0)^-\right) \varphi_0 \dd x\\
=\int_{\R^d} \left((u_0-v_0)^++(u_0-v_0)^-\right)  (T_t)_\star  \varphi_0 \dd x=\int_{\R^d} |u_0-v_0|   (T_t)_\star  \varphi_0 \dd x,
\end{multline*}
for any $\varphi_0 \in X^+$. To show next
that $(T_t)_\star$ is bounded for $\|\cdot\|_X$, we use that $(T_t)_\star \geq 0$,  the previous bound \eqref{first-ineq-referee}, the assumption \eqref{lattice-referee}, and the continuity of $H_t$ for this norm. For the time continuity of $(T_t)_\star$, we regularize any $\varphi_0 \in X$ by convolution thanks to \eqref{mollifier-referee}. Take $\varphi_0^\nu := \rho_\nu \ast \varphi_0 \to \varphi_0$ in $X$ as $\nu \to 0^+$, and note that
\begin{equation}\label{tech-referee}
\|\varphi_0-(T_t)_\star \varphi_0\|_{X} \leq \|\varphi_0-\varphi_0^\nu\|_{X}
+ \| \varphi_0^\nu-(T_t)_\star \varphi_0^\nu\|_{X}+ \underbrace{\|(T_t)_\star (\varphi_0^\nu - \varphi_0)\|_{X}}_{\leq \sum_{\pm} \|H_t (\varphi_0^\nu -\varphi_0)^\pm\|_{X} \text{ by \eqref{first-ineq-referee}}}.
\end{equation}
Note also that 
$
(T_t)_\star (\varphi_0 \ast \rho_\nu)=\rho_\nu \ast (T_t)_\star \varphi_0
$
since $(T_t)_\star:L^1 \to L^1$ is linear, bounded, and commutes with translations. Hence
$$
\|\varphi_0^\nu-(T_t)_\star \varphi_0^\nu\|_{X}=\left\|\rho_\nu \ast \left(\varphi_0-(T_t)_\star \varphi_0\right)\right\|_{X} \leq \|\rho_\nu\|_{X} \| \varphi_0-(T_t)_\star \varphi_0\|_{L^1}
$$
by \eqref{mollifier-referee}, and letting $t \to 0^+$ before $\nu \to 0^+$ in \eqref{tech-referee} implies that 
$$
\lim_{t \to 0^+} \|\varphi_0-(T_t)_\star \varphi_0\|_{X} =0
$$ 
by the (time) strong continuity of $(T_t)_\star$ on $L^1$. This proves our claim, and we infer that $(T_t)_\star = H_t$ on $X^+$. The result follows by density.
\end{proof}


\begin{thebibliography}{99}

%


%

\bibitem{AlIm09}
N.~Alibaud and C.~Imbert.
\newblock Fractional semi-linear parabolic equations with unbounded data.
\newblock \emph{Trans. Amer. Math. Soc.,} 361(5):2527--2566, 2009.

\bibitem{AlHoLBMo06}
O.~Alvarez, P.~Hoch, Y.~Le Bouar and R.~Monneau.
\newblock Dislocation Dynamics: Short-time Existence and Uniqueness of the Solution.
\newblock \emph{Arch. Ration. Mech. Anal.,} 181(3):449--504, 2006.

\bibitem{AnMa10}
B.~Andreianov and M.~Maliki.
\newblock A note on uniqueness of entropy solutions to degenerate parabolic equations in $\R^d$.
\newblock \emph{NoDEA Nonlinear Differential Equations Appl.,} 17:109--118, 2010.

\bibitem{BaCa-Do97}
M.~Bardi and I.~Capuzzo-Dolcetta.
\newblock {\it Optimal control and viscosity solutions of Hamilton-Jacobi-Bellmann equations.}
\newblock Systems \& Control: Foundations \& Applications, Birkh\"{a}user, Boston, Basel, Berlin, 1997.

\bibitem{Bar94}
G. Barles.
\newblock {\it Solutions de viscosit\'e des \'equations de{H}amilton-{J}acobi.}
\newblock Springer-Verlag, Paris, 1994.

\bibitem{BaCaLeMo09} G. Barles, P. Cardaliaguet, O. Ley and A. Monteillet.
\newblock Uniqueness results for nonlocal Hamilton-Jacobi equations. 
\newblock {\it J. Funct. Anal.,} 257(5):1261--1287, 2009. 

\bibitem{BaJa02}
G.~Barles and E.~R.~Jakobsen. 
\newblock On the convergence rate of approximation schemes for Hamilton-Jacobi-Bellman equations.
\newblock \emph{M2AN Math. Model. Numer. Anal.,} 36(1):33--54, 2002.

\bibitem{BaJa07}
G.~Barles and E.~R.~Jakobsen.
\newblock Error bounds for monotone approximation schemes for parabolic Hamilton-Jacobi-Bellman equations.
\newblock \emph{Math. Comp.,} 76(240): 1861--1893, 2007.

\bibitem{BaPe88}
G. Barles and B. Perthame.
\newblock Exit time problems in optimal control and vanishing viscosity method.
\newblock {\it SIAM J. Control Optim.,} 26(5):1133--1148, 1988.

\bibitem{BaSoSo93} G. Barles, H. M. Soner and P. E. Souganidis.
\newblock Front propagation and phase field theory. 
\newblock {\it SIAM J. Control Optim.,} 31(2):439--469, 1993. 

\bibitem{BeKa04}
M.~Bendahmane and K.~H.~Karlsen.
\newblock Renormalized entropy solutions for quasi-linear anisotropic degenerate parabolic equations.
\newblock \emph{SIAM J. Math. Anal.,} 36(2): 405--422, 2004.


\bibitem{BeCrPa01}
Ph. B\'enilan, M.~G.~Crandall and A. Pazy.
\newblock \emph{Nonlinear Evolution Equations Goeverned by Accretive Operators.}
\newblock Book Manuscript, Besan\c{c}on, 2001.

\bibitem{BeCrPi84} 
Ph. B\'enilan, M. G. Crandall and M. Pierre. 
\newblock Solutions of the porous medium equation in $R^N$ under optimal conditions on initial values. 
\newblock {\it Indiana Univ. Math. J.,} 33(1):51--87, 1984.



\bibitem{BeWi94} 
Ph. B\'enilan and P. Wittbold. 
\newblock Nonlinear evolution equations in Banach spaces: basic results and open problems. \newblock {\it Functional analysis (Essen, 1991),} 1--32, Lecture Notes in Pure and Appl. Math., 150, Dekker, New York, 1994.


\bibitem{BoFoZi10} O. Bokanowski, N. Forcadel and H. Zidani.
\newblock $L^1$-error estimates for numerical approximations of Hamilton-Jacobi-Bellman equations in dimension 1.
\newblock {\it Math. Comp.,} 79(271):1395--1426, 2010.

 
\bibitem{CaCrKoSw96} L. Caffarelli, M. G. Crandall, M. Kocan and A. Swi\c{e}ch.
\newblock On viscosity solutions of fully nonlinear equations with measurable ingredients.  
\newblock {\it Comm. Pure Appl. Math.,} 49(4):365--397, 1996.



\bibitem{CaFoMo11} E. Carlini, N. Forcadel and R. Monneau.
  \newblock A generalized fast marching method for dislocation
  dynamics.
  \newblock {\em SIAM J. Numer. Anal.,} 49(6):2470--2500, 2011. 



\bibitem{Car99}
J.~Carrillo.
\newblock Entropy Solutions for nonlinear Degenerate Problems.
\newblock \emph{Arch.~Ration.~Mech.~Anal.,} 147(4):269--361, 1999.

\bibitem{CaGo-Capr}
J. A. Carrillo and D. G\'omez-Castro.
\newblock Interpreting systems of continuity equations in spaces
of probability measures through PDE duality.
\newblock Preprint, arXiv:2206.03968v2 [math.AP], 2023.


\bibitem{Cas92}
V. Caselles.
\newblock Scalar conservation laws and Hamilton-Jacobi equations in one-space variable.
\newblock Nonlinear Anal. 18(5):461--469, 1992. 



\bibitem{ChDi01} 
G.-Q. Chen and E. DiBenedetto. 
\newblock Stability of entropy solutions to the Cauchy problem for a class
of nonlinear hyperbolic-parabolic equations.
\newblock {\it SIAM J. Math. Anal.,} 33(4):751--762, 2001.

\bibitem{ChPe03}
G.-Q.~Chen and B.~Perthame.
\newblock Well-posedness for non-isotropic degenerate parabolic-hyperbolic equations.
\newblock \emph{Ann. Inst. H. Poincar\'e Anal. Non Lin\'eaire,} 20(4):645--668, 2003.




\bibitem{CoPe20}
R. Colombo and V. Perrollaz.
\newblock Initial Data Identification in Conservation Laws and Hamilton-Jacobi Equations.
\newblock  {\em J. Math. Pures Appl. (9),} 138:1--27, 2020.



\bibitem{CrIsLi92}
M.~G.~Crandall, H.~Ishii and P.-L.~Lions.
\newblock User's guide to viscosity solutions of second order partial differential equations.
\newblock {\em  Bull. Amer. Math. Soc.,} 27(1):1-67, 1992.

\bibitem{CrLi83}
M. G. Crandall and P.-L. Lions. 
\newblock Viscosity solutions of Hamilton-Jacobi Equations. 
\newblock {\em Trans.
Amer. Math. Soc.,} 277:1--42, 1983.

\bibitem{CrLiSo89} M. G. Crandall, P.-L. Lions and P. E. Souganidis.
\newblock Maximal solutions and universal bounds for some partial differential equations of evolution.
\newblock {\it Arch. Rational Mech. Anal.,} 105(2):163--190, 1989. 

\bibitem{Daf10}
C.~Dafermos. 
\newblock {\em Hyperbolic Conservation Laws in Continuum Physics.} 
\newblock Springer, 3rd ed., 2010.

%
%

\bibitem{Dib93}
E.~DiBenedetto. 
\newblock {\em Degenerate parabolic equations.}
\newblock Springer-Verlag, New-York, 1993.

\bibitem{EnJa14}
J.~Endal and E.~R.~Jakobsen.
\newblock $L^1$ Contraction for Bounded (Non-integrable) Solutions of Degenerate Parabolic Equations.
\newblock {\it SIAM J. Math. Anal.,} 46(6):3957--3982, 2014.




\bibitem{EvGa15}
L. C. Evans and R. F. Gariepy. 
\newblock {\em Measure theory and fine properties of functions. Revised edition.} \newblock Textbooks in Mathematics. CRC Press, Boca Raton, FL, 2015.



\bibitem{FlSo93}
W. Fleming and H.M. Soner. 
\newblock {\em Controlled Markov Processes and Viscosity Solutions.} 
\newblock Springer-Verlag, New York, 1993.

\bibitem{Fripr}
H. Frid.
\newblock Decay of Almost Periodic Solutions of Anisotropic Degenerate Parabolic-Hyperbolic Equations.   In  {\it Non-linear partial differential equations, mathematical physics, and stochastic analysis,} 
EMS Ser. Congr. Rep.,  183--205,  Eur. Math. Soc., Zürich, 2018. 

\bibitem{GiSa01}
Y. Giga and M.-H. Sato.
\newblock A level set approach to semicontinuous viscosity solutions for Cauchy problems. 
\newblock {\it  Comm. Partial Differential Equations,} 26(5-6):813--839, 2001. 


\bibitem{GoSe10}
D.~Goreac and O.-S.~Serea.
\newblock Discontinuous control problems for non-convex dynamics and near
viability for singularly perturbed control systems.
\newblock {\em Nonlinear Anal.,} 73(8):2699--2713, 2010.

\bibitem{GoSe11}
D.~Goreac and O.-S.~Serea.
\newblock Mayer and optimal stopping stochastic control problems with discontinuous cost.
\newblock {\em J. Math. Anal. Appl.,} 380:327--342, 2011.


\bibitem{Ish89}
H. Ishii. 
\newblock On uniqueness and existence of viscosity solutions of fully nonlinear second order
elliptic  PDEs.
\newblock {\em Comm. Pure Appl. Math.,}  42:14--45, 1989.

%
%

\bibitem{Jen88}
R. Jensen. 
\newblock The maximum principle for viscosity solutions of fully nonlinear second order partial differential equations. 
\newblock {\em Arch. Rational Mech. Anal.,} 101:1--27, 1988.

%
%



\bibitem{KaRi02}
K. H. Karlsen and N. H. Risebro.
\newblock A note on front tracking and equivalence between viscosity
solutions of {H}amilton-{J}acobi equations and entropy
solutions of scalar conservation laws.
\newblock \emph{Nonlinear Anal.,} 50(4):455--469, 2002.



\bibitem{Kru70}
S.~N. Kru\v{z}kov.
\newblock First order quasilinear equations with several independent variables.
\newblock \emph{Math. Sb. (N.S.)} (in Russian), 81(123):228--255, 1970.


\bibitem{Kry97} 
N.V. Krylov. 
\newblock On the rate of convergence of finite-difference approximations for Bellman's
equations.
\newblock \emph{St. Petersburg Math. J.,} 9(3):639--650, 1997.


\bibitem{LiTa01} C.-T. Lin and E. Tadmor.
\newblock $L^1$-stability and error estimates for approximate Hamilton-Jacobi solutions.
\newblock {\it Numer. Math.,} 87(4):701--735, 2001.

%


\bibitem{Pan20}
E. Y. Panov. 
\newblock To the theory of entropy sub-solutions of degenerate non-linear parabolic equations. 
\newblock  {\em Math. Methods Appl. Sci.} 43(16):9387--9404, 2020. 

\bibitem{Pog18}
N. Pogodaev.
\newblock Estimates of the domain of dependence for scalar conservation laws.
\newblock \emph{J. Differ. Equ.,} 265(4):1654--1677, 2018.


\bibitem{Pon16}
A. C. Ponce.
\newblock Elliptic PDEs, measures and capacities. 
\newblock \emph{EMS Tracts Math.,} 23, 2016.



\bibitem{Por24}
A. Porretta.
\newblock Decay rates of convergence for Fokker-Planck equations with confining drift.
\newblock \emph{Adv. Math.,} 436:109393, 2024.


\bibitem{The75}
C. M. Theobald.
\newblock An inequality for the trace of the product of two symmetric matrices.
\newblock \emph{Math. Proc. Cambridge Philos. Soc.,} 77:265--267, 1975. 

\bibitem{vNe92}
J. van Neerven. 
\newblock \emph{The adjoint of a semigroup of linear operators.} 
\newblock Lecture Notes in Math., 1529
Springer-Verlag, Berlin, 1992.

\bibitem{Vaz07}
J. L. V\'azquez.
\newblock \emph{The porous medium equation. Mathematical theory.}
\newblock Oxford Mathematical Monographs. The Clarendon Press, Oxford University Press, Oxford, 2007.


%
\bibitem{WuZhYiLi01}
Z.~Wu, J.~Zhao, J.~Yin and H.~Li.
\newblock \emph{Nonlinear diffusion equations.}
\newblock World Scientific Publishing Co., Inc., River Edge, NJ, 2001.
%

\end{thebibliography}
\end{document}